\declaretheorem[name=Theorem,within=section]{thm}
\newtheorem{lemma}[thm]{Lemma}
\newtheorem{prop}[thm]{Proposition}
\newtheorem{cor}[thm]{Corollary}
\newtheorem{fact}[thm]{Fact}
\newtheorem{claim}[thm]{Claim}
\newtheorem{conj}[thm]{Conjecture}
\theoremstyle{definition}
\newtheorem*{remark}{Remark}
\newtheorem{dfn}[thm]{Definition}
\newtheorem{example}[thm]{Example}
\newcommand{\cA}{\mathcal{A}}
\newcommand{\cB}{\mathcal{B}}
\newcommand{\cC}{\mathcal{C}}
\newcommand{\cF}{\mathcal{F}}
\newcommand{\cP}{\mathcal{P}}
\newcommand{\cQ}{\mathcal{Q}}
\newcommand{\cS}{\mathcal{S}}
\renewcommand{\Pr}{\mathbb{P}}
\newcommand{\1}{\mathbbm{1}} 
\newcommand{\eps}{\varepsilon}
\newcommand{\Aux}{\mathrm{Aux}}
\newcommand{\Cbad}{\mathcal{C}_{\textrm{bad}}}
\newcommand{\Ram}{\mathrm{Ram}} 
\newcommand{\br}[1]{\llbracket{#1}\rrbracket}
\renewcommand{\le}{\leqslant}
\renewcommand{\ge}{\geqslant}
\title{The List-Ramsey threshold for Families of Graphs}
\author{Eden Kuperwasser}
\address{School of Mathematical Sciences, Tel Aviv University, Tel Aviv 6997801, Israel}
\email{kuperwasser@tauex.tau.ac.il}
\author{Wojciech Samotij}
\address{School of Mathematical Sciences, Tel Aviv University, Tel Aviv 6997801, Israel}
\email{samotij@tauex.tau.ac.il}
\thanks{This research was supported in part by the European Research Council Consolidator Grant 101044123 (RandomHypGra), by the Israel Science Foundation grant 2110/22, and by the grant 2019679 from the United States--Israel Binational Science Foundation (BSF) and the United States National Science Foundation (NSF)}
\begin{document}

\begin{abstract}
Given a family of graphs $\cF$ and an integer $r$, we say that a graph is $r$-Ramsey for $\cF$ if any $r$-colouring of its edges admits a monochromatic copy of a graph from $\cF$. The threshold for the classic Ramsey property, where $\cF$ consists of one graph, in the binomial random graph was located in the celebrated work of R\"odl and Ruci\'nski.

In this paper, we offer a twofold generalisation to the R\"odl--Ruci\'nski theorem.  First, we show that the list-colouring version of the property has the same threshold. Second, we extend this result to finite families $\cF$, where the threshold statements might also diverge. This also confirms further special cases of the Kohayakawa--Kreuter conjecture.
Along the way, we supply a short(-ish), self-contained proof of the $0$-statement of the R\"odl--Ruci\'nski theorem.
\end{abstract}

\maketitle

\setcounter{tocdepth}{1}
\tableofcontents

\section{Introduction}

Given an integer $r \ge 2$ and graphs $F$ and $G$, we say that $G$ is $r$-Ramsey for $F$ if any $r$-colouring of the edges of $G$ admits a monochromatic copy of $F$. We denote this property by $G \in \Ram(F,r)$. Ramsey's theorem~\cite{Ram30} states that, for all $r$ and $F$, the clique $K_n$ is $r$-Ramsey for $F$ whenever $n$ is sufficiently large.

One of the efforts of probabilistic combinatorics in the last few decades has been to understand the behaviour of such Ramsey properties in random structures. More concretely, one can ask how the probability that the binomial random graph $G_{n,p}$ is $r$-Ramsey for some graph $F$ changes as $p$ traverses the interval $[0,1]$.
In a major breakthrough, R\"odl and Ruci\'nski~\cite{RodRuc95} answered this question by finding thresholds for the Ramsey property for all graphs $F$ and every number of colours $r$. To state the theorem, first define the maximum $2$-density of a graph $F$ to be \[ m_2(F) \coloneqq  \max \left \{ \frac{e_{F'} - 1}{v_{F'} - 2} \colon \emptyset \neq F' \subseteq F, \; v_{F'} \ge 3 \right \} \cup \left \{ \frac{1}{2}\right \}. \]

\begin{thm}[R\"odl--Ruci\'nski~\cite{RodRuc95}]
\label{thm:rodl-rucinski}
Given $r \ge 2$ and a graph $F$, there are positive constants $c_0, c_1$ such that
  \[
    \lim_{n \to \infty} \Pr \left( G_{n,p} \in \Ram(F, r) \right) =
    \begin{cases}
      1, & p \ge c_1 \cdot n^{-1/m_2(F)}, \\
      0, & p \le c_0 \cdot n^{-1/m_2(F)},
    \end{cases}
  \]
  unless $F$ is a star forest or $r=2$ and $F$ is the path on four vertices\footnote{This last case, which was originally missed, was later noticed in~\cite{FriKri00}.}. 
\end{thm}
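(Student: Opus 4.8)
The plan is to prove the $1$-statement and the $0$-statement by separate arguments, both resting on the elementary observation that $G \notin \Ram(F,r)$ exactly when $E(G)$ can be partitioned into $r$ spanning subgraphs, none of which contains a copy of $F$. Throughout, write $p = c\,n^{-1/m_2(F)}$, with $c$ to be taken large for the $1$-statement and small for the $0$-statement, and fix a $2$-balanced subgraph $F^\ast \subseteq F$ realising $m_2(F)$; although both directions deal with $F$ itself, it is $F^\ast$ that governs the moment estimates.

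For the $1$-statement ($c = c_1$ large) I would invoke the by-now-standard transference machinery for Ramsey properties of random graphs --- the hypergraph container method, or the Kohayakawa--\L uczak--R\"odl counting lemma combined with a Conlon--Gowers / Schacht-type transference theorem. The deterministic ingredient is that $K_n$ is \emph{robustly} $r$-Ramsey for $F$: by Ramsey's theorem and a supersaturation argument, every $r$-colouring of $E(K_n)$ contains $\Omega(n^{v_F})$ monochromatic copies of $F$. The machinery converts this, for $p \ge C\,n^{-1/m_2(F)}$, into the statement that a.a.s.\ every $r$-colouring of $E(G_{n,p})$ already has a monochromatic copy of $F$; concretely, an $F$-free $r$-colouring of $G_{n,p}$ would confine each colour class to one of at most $\exp(O(n^{2-1/m_2(F)}))$ ``containers'', and a union bound over the $r$-tuples of containers --- weighed against the probability $\exp(-\Omega(n^{2-1/m_2(F)}))$ that $G_{n,p}$ realises such a partition while keeping every class $F$-free --- produces a contradiction. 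Both bounds being of order $\exp(\pm\Theta(n^{2-1/m_2(F)}))$ is precisely why $n^{-1/m_2(F)}$ is the right scale, and means $c_1$ has to be chosen large relative to the (previously fixed) container constants.

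The $0$-statement ($c = c_0$ small) is where essentially all the work lies: the goal is to produce, a.a.s., an explicit $F$-free $r$-colouring of $G_{n,p}$. The heuristic is that at this density the copies of $F^\ast$ barely overlap --- their expected number is of the same order as the number of edges --- so denser local configurations should be rare. I would first prove, by a first-moment computation, a \emph{local sparsity} lemma: for every constant $K_0$, a.a.s.\ $G_{n,p}$ has no ``over-dense'' subgraph $J$ with $v_J \le K_0$, i.e.\ none with $e_J > m_2(F)\,v_J$, since the expected number of such $J$ is $O(c^{e_J}\,n^{v_J - e_J/m_2(F)}) = o(1)$ and only finitely many pairs $(v_J,e_J)$ are relevant (a copy of $F$ is itself \emph{not} over-dense --- one has $e_F < m_2(F)\,v_F$ because $F$ is not a matching --- so this is consistent with the abundance of copies of $F$). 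Consequently the ``copy hypergraph'' on $E(G_{n,p})$ whose hyperedges are the copies of $F$ is everywhere locally sparse, and one colours it by peeling: order the copies so that each has, by local sparsity, an edge contained in no earlier copy, and on processing a copy (re)colour that private edge so as not to complete a monochromatic $F$ --- possible since $r \ge 2$ and only boundedly many constraints are live. What obstructs this are the ``clusters'' --- bounded subgraphs at the borderline density $e_J = m_2(F)\,v_J$ --- and a further first-moment estimate shows these are a.a.s.\ $O(1)$ in number, of bounded size, and pairwise far apart, so each can be recoloured by brute force and its private-edge environment peeled as before, with boundary consistency automatic.

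The genuinely hard step --- the one that dictates the list of exceptions --- is that the brute-force recolouring of a cluster succeeds \emph{if and only if} that bounded graph is not itself $r$-Ramsey for $F$. This is exactly where the exceptions come from: for a star forest $F$, a single vertex of sufficiently large degree is $r$-Ramsey for $F$ and appears in $G_{n,p}$ for $p$ well below $n^{-1/m_2(F)}$, while for $F = P_4$ with $r=2$ there is a fixed tree that is $2$-Ramsey for $P_4$ and is present in $G_{n,c_0/n}$ for every $c_0 > 0$; in both cases the $0$-statement genuinely fails. For all other $F$ one must verify that no graph whose appearance threshold is at or below $n^{-1/m_2(F)}$ can be $r$-Ramsey for $F$ --- essentially, that every $r$-Ramsey-for-$F$ graph $J$ satisfies $\max_{\emptyset \ne J' \subseteq J} e_{J'}/v_{J'} > m_2(F)$ --- a finite but delicate analysis of minimal Ramsey graphs. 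That analysis, together with making the local-sparsity and peeling arguments quantitatively robust and separately handling the subcritical regime $m_2(F) = 1$ of non-star forests (where the components of $G_{n,c_0/n}$ are trees or barely more, and the same peeling still goes through), is where I expect the bulk of the difficulty.
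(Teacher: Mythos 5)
Your architecture is the right one and matches the paper's: a probabilistic statement confining the a.a.s.\ structure of $G_{n,p}$ at density $n^{-1/m_2(F)}$, plus a deterministic statement that no graph of that density can be $r$-Ramsey for $F$, with the exceptions arising exactly where a fixed non-dense Ramsey graph exists. But as written the proposal has two genuine gaps, and they are precisely the two places where the paper does all of its work.

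First, the deterministic step is not carried out, and calling it ``a finite but delicate analysis'' mischaracterises it: one must show that \emph{no} graph $J$ with density at most $m_2(F)$ is Ramsey for $F$, and the candidate graphs $J$ form infinite families (for $F=C_4$ every $3$-regular graph survives the degree-counting argument; for $F=K_3$ every $4$-regular graph does; for $F=K_4$ one is left with $K_6$). Ruling these out is not a bounded check --- the paper needs the Helpful Lemma, a discharging argument on the minimum-degree structure of $J$, and then explicit colourings of $K_5$, of $K_6$, and of all $3$-regular graphs. Your peeling heuristic (``each copy has a private edge, recolour it'') is exactly what fails on these configurations, and the claim that ``only boundedly many constraints are live'' when recolouring is unsupported. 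Second, the probabilistic step does not close as stated. Your local-sparsity lemma only covers subgraphs on at most $K_0$ vertices; but the connected components of the copy-hypergraph at $p = c\,n^{-1/m_2(F)}$ are \emph{not} of bounded size a.a.s.\ (chains of overlapping copies of length $\Theta(\log n)$ occur), so ``the clusters are $O(1)$ in number, of bounded size, and pairwise far apart'' is false. Extending the first-moment bound to unbounded subgraphs fails by naive union bound, since there are $n^k 2^{\binom{k}{2}}$ candidate $k$-vertex subgraphs; one needs to restrict to a structured family whose members can be encoded in $(Ln)^k$ ways, which is what the paper's exploration process (regular versus degenerate steps, with a $\log n$ cutoff) accomplishes. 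A smaller point: the fixed $2$-Ramsey graph for $P_4$ is unicyclic, not a tree (every tree can be $2$-coloured into star forests), which is why the exceptional conclusion is ``probability bounded away from zero'' rather than ``tends to one''.
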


In the two exceptional cases  not covered by Theorem~\ref{thm:rodl-rucinski}, the corresponding statement requires some amendment.  This is because, in both cases, the threshold is determined by the appearance of a fixed graph $G$ that is $r$-Ramsey for our forbidden graph.  More precisely, the Ramsey threshold coincides with the threshold $\hat{p}_G = \hat{p}_G(n)$ for the property that $G \subseteq G_{n,p}$.

In some cases, Theorem~\ref{thm:rodl-rucinski} can be strengthened still to yield a sharp threshold. The following theorem is a combination of the works of Friedgut and Krivelevich~\cite{FriKri00} and Friedgut, Kuperwasser, Samotij, and Schacht~\cite{FriKupSamSch22}.

\begin{thm}[\cite{FriKri00,FriKupSamSch22}]
  Let $r \ge 2$ and let $F$ be a graph that is either a clique, a cycle, or a tree\footnote{The work in~\cite{FriKri00} establishes sharp thresholds for trees, and~\cite{FriKupSamSch22} proves the result for the family of \emph{collapsible} graphs, which includes cliques and cycles;  see~\cite{FriKupSamSch22} for the exact definition of collapsibility.} (excluding the exceptional cases from before). Then there are positive constants $c_0, c_1$ and a function $c_0 \le c(n) \le c_1$ such that, for every $\eps > 0$, we have 
 \[
    \lim_{n \to \infty} \Pr \left( G_{n,p} \in \Ram(F, r) \right) =
    \begin{cases}
      1, & p \ge (1+ \eps) \cdot c(n) \cdot n^{-1/m_2(F)}, \\
      0, & p \le (1 - \eps) \cdot c(n) \cdot n^{-1/m_2(F)}.
    \end{cases}
  \]
\end{thm}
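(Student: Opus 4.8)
Since $\Ram(F,r)$ is a monotone increasing, $S_n$-invariant graph property, it has a threshold by the Bollob\'as--Thomason theorem, and Theorem~\ref{thm:rodl-rucinski} locates that threshold at $p = \Theta(n^{-1/m_2(F)})$. Hence it suffices to prove that for $F$ a clique, a cycle, or a tree the threshold is \emph{sharp}: once this is known one may define $c(n)$ by $\Pr[G_{n,\,c(n)n^{-1/m_2(F)}} \in \Ram(F,r)] = 1/2$, the bounds $c_0 \le c(n) \le c_1$ being inherited from Theorem~\ref{thm:rodl-rucinski}, and the displayed $0$- and $1$-statements are then exactly the definition of a sharp threshold. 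The plan is to forbid a coarse threshold via the Friedgut--Bourgain sharp-threshold criterion.

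Assume for contradiction that the threshold is coarse. Then there exist a fixed graph $M$, a constant $\delta > 0$, an infinite set of $n$, and $p = p(n) = \Theta(n^{-1/m_2(F)})$ with $\Pr[G_{n,p} \in \Ram(F,r)] \le 1-\delta$ such that a copy $M^\ast$ of $M$ placed on a uniformly random vertex tuple is \emph{pivotal} with probability at least $\delta$, where ``pivotal'' means $G_{n,p} \notin \Ram(F,r)$ but $G_{n,p} \cup M^\ast \in \Ram(F,r)$; the criterion also certifies that $M$ is suitably balanced, which is used below. It is enough to contradict this by showing that a random copy of $M$ is pivotal with probability $o(1)$. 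To this end, condition on $G_{n,p} \notin \Ram(F,r)$ and fix a \emph{good} colouring $\sigma$, i.e.\ an $r$-colouring of $E(G_{n,p})$ with no monochromatic copy of $F$. Reveal $G_{n,p}$ in two rounds as $G' \cup G''$ with $G' = G_{n,(1-\gamma)p}$ and $G'' = G_{n,\gamma p}$ for a small constant $\gamma>0$, and regard the $O(1)$ edges of $G''$ incident to the bounded vertex set of $M^\ast$, together with $M^\ast$ itself, as a bounded ``patch'' $P$ to be inserted into $G'$.

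The aim is now to show that, with probability $1-o(1)$, the restriction of $\sigma$ to $G'$ extends to a good colouring of $G' \cup P$ after recolouring only edges in a bounded-radius neighbourhood of $P$; this makes $G_{n,p} \cup M^\ast$ non-Ramsey, contradicting pivotality. The key point is that $p = \Theta(n^{-1/m_2(F)})$ is precisely the density at which the densest subgraphs of $F$ occur sparsely: because $F$ is a clique, a cycle, or a tree, it is (strictly) $2$-balanced, so a routine first-moment computation shows that, with high probability, the bounded vertex set touched by $P$ meets only $O(1)$ copies of any subgraph $H \subseteq F$ with $d_2(H) \ge m_2(F)$, and these lie spread out in $G_{n,p}$. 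Hence creating a monochromatic copy of $F$ through a new edge of $P$ amounts to completing one of boundedly many partial copies of $F$, and one invokes the structural hypothesis on $F$ --- the collapsibility of cliques and cycles, or the hereditary tree structure used in~\cite{FriKri00} --- to choose the colours on $P$ and perform a bounded local recolouring of $\sigma$ that defeats all of these finitely many constraints simultaneously.

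The crux is this local repair step, and it is exactly where the hypothesis that $F$ be a clique, a cycle, or a tree cannot be dropped: for a general $F$ the threshold may genuinely be coarse --- for instance it may be driven by the appearance of a single fixed graph that is $r$-Ramsey for $F$ --- so any argument must extract from collapsibility (resp.\ from the tree structure) the combinatorial fact that such targets admit ``free'' local recolourings around an arbitrary bounded patch. Quantifying the repair requires controlling how copies of $F$ and of its dense subgraphs cluster near $P$ (a first/second moment estimate exploiting $2$-balancedness), establishing the recolouring lemma for collapsible and for tree targets, and then packaging everything with the Friedgut--Bourgain machinery --- including verifying its balancedness requirements on $M$ and the attendant symmetry and boundedness technicalities --- into a clean contradiction; this is the bulk of the work, carried out in~\cite{FriKri00} for trees and in~\cite{FriKupSamSch22} for collapsible graphs.
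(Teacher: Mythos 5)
This theorem is not proved in the paper at all: it is stated as a quotation of results from~\cite{FriKri00} and~\cite{FriKupSamSch22}, so there is no in-paper argument to compare yours against. Judged on its own terms, your outline correctly identifies the strategy those works use --- reduce to sharpness via the R\"odl--Ruci\'nski (semi-sharp) threshold, define $c(n)$ as the probability-$1/2$ point, and rule out a coarse threshold with the Friedgut--Bourgain criterion by showing that a random copy of a bounded ``booster'' graph $M$ is pivotal with probability $o(1)$, via two-round exposure and a bounded local recolouring around the patch. The framing is accurate, including the observation that the hypothesis on $F$ enters exactly at the recolouring step and that for general $F$ the threshold can genuinely be coarse.

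However, as a proof the proposal has a genuine gap, and you acknowledge it yourself: the entire mathematical content --- the first/second-moment clustering estimates showing that dense subgraphs of $F$ near the patch are sparse and spread out, the recolouring lemma that exploits collapsibility (for cliques and cycles) or the hereditary structure of trees, and the verification of the hypotheses of the sharp-threshold criterion (symmetry, balancedness of $M$, the conditioning on a good colouring $\sigma$ being compatible with the two-round exposure) --- is deferred to the cited papers rather than carried out. In particular, the ``local repair'' step is precisely where~\cite{FriKupSamSch22} needs its notion of collapsibility and several sections of work; asserting that one ``invokes the structural hypothesis on $F$'' does not constitute a proof of that step. What you have written is a faithful roadmap of the cited proofs, which is appropriate given that the theorem is being quoted, but it should be presented as such rather than as a self-contained argument.
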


We note here that, in the exceptional cases, the Ramsey property does not have a~sharp threshold.  Further, even though $G_{n,p}$ is a.a.s.\ not Ramsey when $p \ll \hat{p}_G$, the probability that $G_{n,p}$ contains the fixed Ramsey graph $G$ is still bounded from below by some positive constant for any $p = \Theta(\hat{p}_G)$. \\

In this paper, we will be interested in two extensions to the graph Ramsey property. The first is a generalisation of the property to families of graphs. The second is a~list-colouring variant that was recently introduced in~\cite{AloBucKalKupSza21}.

\begin{dfn}[Family-Ramsey]
 Given an integer $r \ge 2$ and a family $\cF$ of graphs, we say that a graph $G$ is $r$-Ramsey with respect to $\cF$ if any $r$-colouring of the edges of $G$ admits a monochromatic copy of one of the members of $\cF$. We denote this property as $G \in \Ram(\cF,r)$.
\end{dfn}

\begin{dfn}[List-Ramsey]
 Given an integer $r \ge 2$ and a family $\cF$ of graphs, we say that a graph $G$ is $r$-list-Ramsey with respect to $\cF$ if there is an assignment of lists of $r$ colours to the edges of $G$, not necessarily identical, such that any colouring from these lists admits a monochromatic copy of some $F \in \cF$. We denote this property by $G \in \Ram_\ell(\cF, r)$.
\end{dfn}

In~\cite[Section 7]{FriKupSamSch22}, it was shown that, for all graphs $F$ bar the exceptional cases, if $p = \Theta(n^{-1/m_2(F)})$, then any constant-sized graph that appears in $G_{n,p}$ with some positive probability, as $n$ tends to infinity, cannot be $2$-list-Ramsey for $F$. This sparks some hope that we can extend Theorem~\ref{thm:rodl-rucinski} to the list-Ramsey threshold. 

Such hope turned out to be true for other Ramsey-type properties, such as those corresponding to Schur's theorem~\cite{Sch17} on sums and to van der Waerden's theorem~\cite{vdW27} on arithmetic progressions. The usual thresholds for these properties had been established in the works of Graham, R\"odl, and Ruci\'nski~\cite{GraRodRuc96} and R\"odl and Ruci\'nski~\cite{RodRuc95, RodRuc97}. The list-colouring variants for these properties were considered in~\cite{FriKupSamSch22}, in the context of establishing sharp thresholds, where it was shown that the list-colouring threshold coincides with the usual threshold.

\begin{remark}
In fact, both of these results are immediate corollaries of \cite[Theorem~1.7]{FriKupSamSch22}, which gives sufficient conditions for locating the threshold for list-colourability of random induced subgraphs of `almost-linear' hypergraphs.
\end{remark}

Returning to graphs and to the results of this paper, we divide our statement for graph families into two cases. First, we treat the `generic' case, where we look at finite families of graphs that do not include forests. Second, we allow forests, which invites some of the exceptional behaviour that we encountered in the statement of the original random Ramsey theorem. Given a finite family of graphs $\cF$ define \[m_2(\cF) \coloneqq \min \left\{ m_2(F) \colon F \in \cF  \right\}.\]
Note that forbidding forests is the same as asserting that $m_2(\cF) > 1$.

\begin{thm}[Generic case]
  \label{thm:generic-case}
  Given an integer $r \ge 2$ and a finite family of graphs $\cF$ with $m_2(\cF) > 1$, there are positive constants $c_0, c_1$ such that
  \[
    \lim_{n \to \infty} \Pr \left( G_{n,p} \in \Ram_\ell(\cF, r) \right) =
    \begin{cases}
      1, & p \ge c_1 \cdot n^{-1/m_2(\cF)}, \\
      0, & p \le c_0 \cdot n^{-1/m_2(\cF)}.
    \end{cases}
  \]
\end{thm}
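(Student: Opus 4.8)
\emph{The $1$-statement} is immediate. Choose $F^* \in \cF$ attaining $m_2(F^*) = m_2(\cF)$, which exists because $\cF$ is finite. Since $m_2(\cF) > 1$, the graph $F^*$ is not among the exceptions of Theorem~\ref{thm:rodl-rucinski}, so that theorem yields a constant $c_1 > 0$ with $\Pr(G_{n,p} \in \Ram(F^*, r)) \to 1$ whenever $p \ge c_1 \cdot n^{-1/m_2(F^*)} = c_1 \cdot n^{-1/m_2(\cF)}$. As $F^* \in \cF$, any $r$-colouring that produces a monochromatic copy of $F^*$ produces a monochromatic member of $\cF$, so $\Ram(F^*, r) \subseteq \Ram(\cF, r)$; and giving every edge the list $\{1, \dots, r\}$ shows $\Ram(\cF, r) \subseteq \Ram_\ell(\cF, r)$. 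The $1$-statement follows, and all the real work is in the $0$-statement.

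For the $0$-statement I would first make two reductions. Since $\Ram_\ell(\cF, r)$ is a monotone increasing graph property, it suffices to prove the $0$-statement at $p = c_0 \cdot n^{-1/m_2(\cF)}$, the top of the range. Next, replacing each $F \in \cF$ by a subgraph $\tilde F \subseteq F$ with $m_2(\tilde F) = m_2(F)$ that is $2$-balanced, we may assume every member of $\cF$ is $2$-balanced: this does not change $m_2(\cF)$, and is legitimate because a colour class containing a copy of $F$ contains a copy of $\tilde F$, so a colouring avoiding monochromatic copies of all the $\tilde F$ avoids monochromatic copies of all the $F$. It is then convenient to work with the hypergraph $\cH = \cH(G)$ on vertex set $E(G)$ whose hyperedges are the edge sets of copies in $G$ of members of $\cF$; note that $G \notin \Ram_\ell(\cF, r)$ is precisely the statement $\chi_\ell(\cH) \le r$, i.e.\ that every assignment of $r$-element lists to the vertices of $\cH$ admits a colouring with no monochromatic hyperedge. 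The goal becomes: a.a.s.\ $\chi_\ell(\cH(G_{n,p})) \le r$.

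The plan is to isolate a structural property $\cP$ of graphs such that (i) $\chi_\ell(\cH(G)) \le r$ whenever $G$ has $\cP$, and (ii) $G_{n,p}$ a.a.s.\ has $\cP$. For (i) one argues by contrapositive: if $G \in \Ram_\ell(\cF, r)$, pass to an edge-minimal subgraph $J$ with this property; then, for every edge $e$ of $J$, a list assignment witnessing $J \in \Ram_\ell(\cF, r)$ together with a good colouring of $J - e$ forces, for each of the $r$ colours in the list at $e$, a copy of a member of $\cF$ that uses $e$ and all of whose other edges received that colour. Iterating this observation and amalgamating the copies it produces — roughly as in Rödl--Ruciński — yields, inside $G$, a subgraph from an explicit family $\cC = \cC(\cF, r)$ of ``unavoidable'' configurations: each $C \in \cC$ is a union $C_1 \cup \dots \cup C_t$ of copies of members of $\cF$ in which every $C_i$ with $i \ge 2$ shares at least one edge with $C_1 \cup \dots \cup C_{i-1}$. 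Here both hypotheses are used: since $C_i$ shares an edge, it shares at least two vertices, so such an attachment step (of a copy of $F$) multiplies the expected number of copies of $C$ in $G_{n,p}$ by at most $n^{v_F - 2} p^{e_F - 1} = c_0^{\,e_F - 1}\, n^{(v_F - 2) - (e_F - 1)/m_2(\cF)} \le c_0^{\,e_F - 1} < 1$, because $2$-balancedness gives $(e_F - 1)/(v_F - 2) = m_2(F) \ge m_2(\cF)$. Once (i) is in place, (ii) reduces to showing $\sum_{C \in \cC} n^{v_C} p^{e_C} \to 0$, after which Markov's inequality gives that $G_{n,p}$ a.a.s.\ contains no member of $\cC$, hence $\chi_\ell(\cH(G_{n,p})) \le r$.

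The hard part will be exactly the estimate $\sum_{C \in \cC} n^{v_C} p^{e_C} = o(1)$, and it is here that the difficulty of the Rödl--Ruciński $0$-statement resides. The configurations in $\cC$ have unbounded size, so the naive bound starts from a ``first copy'' contributing as much as $\Theta(n^{\,2 - 1/m_2(\cF)}) \to \infty$, which the individual attachment factors $c_0^{\Omega(1)}$ do not, on their own, overcome. Overcoming it requires showing, roughly, that being list-Ramsey forces the build-up either to close into a subgraph whose density strictly exceeds $m_2(\cF)$ or to run long enough that the accumulated attachment factors dominate $n^{\,2 - 1/m_2(\cF)}$, together with an economical encoding of the configurations in $\cC$ by their build sequences, so that the number of shapes on a given number of vertices does not defeat this decay; organising this weighting is the technical heart of the argument. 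By contrast, the passages from a single graph to a finite family and from ordinary to list colourings cost only extra bookkeeping — a choice of member of $\cF$ and of a witnessing colouring at each step — and introduce no essentially new idea, which is why the same argument also delivers the $0$-statement of Theorem~\ref{thm:rodl-rucinski} itself.
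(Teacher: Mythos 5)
Your handling of the $1$-statement and the preliminary reductions is fine (though note the paper reduces to \emph{strictly} $2$-balanced members, which is what the later arguments actually need, and also reduces to $r=2$ via $\Ram_\ell(\cF,r)\subseteq\Ram_\ell(\cF,2)$). The problem is that for the $0$-statement you have written a plan, not a proof: you yourself flag that ``organising this weighting is the technical heart of the argument'' and then stop. Concretely, the family $\cC$ as you define it --- unions $C_1\cup\dots\cup C_t$ in which each $C_i$ shares an edge with the union of its predecessors --- is just the family of $\cF$-clusters, and a single copy of some $F\in\cF$ already belongs to it; its expected count is $\Theta(n^{2-1/m_2(\cF)})\to\infty$, so $\sum_{C\in\cC}n^{v_C}p^{e_C}=o(1)$ is false for that $\cC$. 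To make the first moment work you must (a) prove a structural statement saying which clusters a \emph{minimal} list-Ramsey graph is forced to contain (enough overlap to push the density strictly above $m_2(\cF)$, or unbounded size), and (b) count the resulting configurations economically enough that the per-shape decay survives the number of shapes. Neither is supplied, and your per-step factor $n^{v_F-2}p^{e_F-1}$ is only correct when the new copy meets the old configuration in exactly one edge; larger intersections need strict $2$-balancedness to control, which is exactly where the paper's ``degenerate step'' bookkeeping lives.

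For comparison, the paper deliberately avoids folding everything into one first-moment computation. It splits the work into a Probabilistic Lemma --- a.a.s.\ every $\cF$-cluster $C\subseteq G_{n,p}$ has $e_C/v_C\le m_2(\cF)$, proved via an exploration process with regular/degenerate steps truncated after $\Gamma$ degenerate steps or $\log n$ vertices, so that the explored subgraph is either strictly supercritical in density or large --- and a Deterministic Lemma --- every minimal list-Ramsey graph satisfies $e_G/v_G> m_2(\cF)$, proved by a discharging argument on cores plus separate treatment of $K_3$, $K_4$, $C_4$ and the exceptional graphs $K_5$, $K_6$. Note that the deterministic half controls only the \emph{global} edge density $e_G/v_G$, not $m(G)$, which is why the probabilistic half can afford to bound only $e_C/v_C$ for clusters; your scheme, which wants Markov's inequality over all of $\cC$ directly, would instead need the full strength of the classical (D)$+$(P) decomposition. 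Either route can be completed, but as written your proposal defers precisely the two lemmas that constitute the proof.
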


In other words, as long as we exclude forests, we have a semi-sharp threshold corresponding to the minimum $2$-density amongst the graphs in $\cF$.

\begin{remark}
  The finiteness assumption is necessary.  For example, if we just allow $\cF$ to contain all graphs with $2$-density at least $t$, then $G_{n,p}$ will become $r$-Ramsey for $\cF$ as soon as $p \gg n^{-1}$.  Indeed, if $p \gg n^{-1}$, then a.a.s.\ $e(G_{n,p}) \gg n$.  Further, since any $r$-colouring of $G_{n,p}$ admits a colour class $C$ taking up at least a $(1/r)$-fraction of the edges, the ratio $\frac{e_C - 1}{n-2}$ tends to infinity with $n$, which implies that $C \in \cF$.   
\end{remark}

Observe that, for every $r \ge 2$ and every nonempty family $\cF$, we have
\begin{equation}
  \label{eq:Ram-hierarchy}
  \Ram_\ell(\cF, 2) \supseteq \Ram_\ell(\cF, r) \supseteq \Ram(\cF, r) \supseteq \bigcup_{F \in \cF} \Ram(F,r).
\end{equation}
This means that the $1$-statement in Theorem~\ref{thm:generic-case} follows from the $1$-statement in Theorem~\ref{thm:rodl-rucinski}.  We remark here that Nenadov and Steger~\cite{NenSte16} show how to prove the latter  very quickly using the hypergraph container lemma and a `supersaturated' version of Ramsey's theorem.  Therefore, we only need to prove the $0$-statement in Theorem~\ref{thm:generic-case}, where we may also assume that $r = 2$, by the first inclusion in~\eqref{eq:Ram-hierarchy}.

\begin{thm}[$0$-statement]
  \label{thm:general}
  For every finite family $\cF$ of graphs with $m_2(\cF) > 1$, there exists a positive constant $c_0$ such that, for all $p \le c_0 \cdot n^{-1/m_2(\cF)}$,
  \[
    \lim_{n \to \infty} \Pr \left(G_{n,p} \in \Ram_\ell(\cF, 2) \right) = 0.
  \]
\end{thm}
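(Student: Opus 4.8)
The plan is to follow the standard strategy for Ramsey $0$-statements, adapting it to the list-colouring setting and to families. Write $m := m_2(\cF)$ and fix $F^* \in \cF$ with $m_2(F^*) = m$. We want to show that, with high probability, $G = G_{n,p}$ admits, for \emph{every} assignment of lists of size $2$ to its edges, a proper colouring (i.e.\ one with no monochromatic copy of any $F \in \cF$). Since $r=2$, each edge $e$ has a list $L(e)$ of two colours, so colouring $e$ amounts to a binary choice. The first step is the classical observation that the obstruction to list-colourability is \emph{local}: if $G$ is $2$-list-Ramsey for $\cF$, then $G$ contains a minimal such subgraph, and one shows — this is the core structural lemma — that a minimal $2$-list-Ramsey graph must be, in a suitable sense, ``dense everywhere'': every edge lies in many copies of members of $\cF$, and more quantitatively the minimal obstruction $H$ satisfies $e_H \ge m_2(\cF) \cdot v_H - c$ for an appropriate constant, or better, that $H$ decomposes via a bounded sequence of copies of graphs from $\cF$ glued along edges. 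I would isolate a finite list $\cG = \cG(\cF)$ of ``core'' graphs such that $G \in \Ram_\ell(\cF,2)$ forces $G$ to contain a copy of some $G' \in \cG$, where every $G' \in \cG$ has density $e_{G'}/v_{G'}$ bounded below by something exceeding $m_2(\cF)$ — or, if one cannot push the density strictly above $m_2(\cF)$, then at least match it while gaining a constant.

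Second, having reduced the problem to the non-appearance of a finite family $\cG$ of ``rigid'' graphs in $G_{n,p}$, I would invoke the first-moment method: the expected number of copies of a fixed graph $G'$ in $G_{n,p}$ is $\Theta(n^{v_{G'}} p^{e_{G'}})$, and with $p \le c_0 n^{-1/m}$ this is $O\bigl(c_0^{e_{G'}} n^{v_{G'} - e_{G'}/m}\bigr)$. So the whole scheme works provided every $G' \in \cG$ satisfies $e_{G'} \ge m \cdot v_{G'}$, i.e.\ has (ordinary, not $2$-)density at least $m_2(\cF)$; then the expected count is $O(c_0^{e_{G'}})$, bounded, and taking $c_0$ small forces it to $0$ — but boundedness is not enough to conclude non-appearance. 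This is the well-known subtlety, and it is why one works with \emph{2}-density throughout the Rödl–Ruciński argument: the right claim is that the minimal obstruction graphs in $\cG$ satisfy $e_{G'} - 1 \ge m_2(\cF)(v_{G'} - 2)$ with the inequality tight only in controllable ways, and that after passing to a suitable ``kernel'' (deleting low-degree vertices, or taking a $2$-core-like object) one gains. Concretely I would show that $G \in \Ram_\ell(\cF, 2)$ implies $G$ contains a subgraph $H$ in which \emph{every} edge is ``doubled'' — lies in two edge-disjoint-up-to-that-edge copies of members of $\cF$ — forcing $e_H > m_2(\cF) \, v_H$ with room to spare, after which the first-moment bound $\sum_{H} n^{v_H} p^{e_H} \to 0$ closes the argument. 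Here one crucially uses $m_2(\cF) > 1$: it guarantees no $F \in \cF$ is a forest, so gluing copies along edges genuinely increases the density, and it also makes the relevant exponents $v_H - e_H/m_2(\cF)$ negative.

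Third, I need to handle the list structure, which is the genuinely new ingredient over Rödl–Ruciński. The point is: why can an adversary's choice of $2$-element lists ever force a monochromatic $\cF$-copy when a single global palette of $2$ colours does not? The key reduction — essentially the one behind \cite[Section 7]{FriKupSamSch22} — is that $2$-list-colourability of $G$ is equivalent to a \emph{constraint satisfaction / $2$-SAT-type} condition: orient the binary choice at each edge, and a monochromatic copy of $F$ becomes a clause. A minimal obstruction corresponds to an unsatisfiable such instance, and by the structure of $2$-colourability one extracts a subgraph that is ``Ramsey-like'' in a combinatorial sense independent of the particular lists — precisely a member of a finite family $\cG$ depending only on $\cF$ and $r=2$. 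So the list aspect does not change the thresholds: it only enlarges the finite family of forbidden cores by a bounded amount, and the density estimates above are uniform over this enlargement. I would make this precise by a deletion/minimality argument: take $G \in \Ram_\ell(\cF,2)$ minimal; for the list assignment witnessing the Ramsey property, every edge is ``essential'', meaning removing it makes $G$ list-$2$-colourable; a short argument then shows every edge lies in a monochromatic-$\cF$-copy under some completion, and iterating produces the dense kernel $H$ as in Step~2.

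The main obstacle, as in all such $0$-statements, is Step~2 — proving the structural/density lemma that a minimal $2$-list-Ramsey graph for $\cF$ has a subgraph of density exceeding $m_2(\cF)$ (or matching it with an additive gain), uniformly over a finite family. For a single graph $F$ this is the technical heart of Rödl–Ruciński (and the $P_4$, star-forest exceptions show it can fail); here one must additionally check that working with the \emph{minimum} $2$-density over $\cF$ is the right parameter — intuitively clear, since an obstruction must eventually embed \emph{some} $F \in \cF$ and the cheapest one to embed is the minimiser — and that the hypothesis $m_2(\cF) > 1$ rules out the exceptional cases wholesale (no star forests, no $P_4$, since those are forests). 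I expect the cleanest route is to prove a ``supersaturated'' converse to the local structure and then run the first-moment computation, exactly paralleling the self-contained proof of the Rödl–Ruciński $0$-statement that the authors advertise in the abstract; the list and family generalisations are then layered on as bounded bookkeeping.
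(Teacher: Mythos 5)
Your overall architecture (a deterministic density lemma for minimal obstructions plus a first-moment union bound) is the right one and matches the paper's, but two of your key steps have genuine gaps. First, the union bound $\sum_H n^{v_H}p^{e_H}$ is not available as you set it up: the minimal list-Ramsey graphs for $\cF$ are not known to come from a finite list $\cG$, and if you instead sum over all graphs $H$ on $k$ vertices with $e_H > m_2(\cF)\, v_H$, the number of such graphs is $2^{\Theta(k^2)}$, which swamps $n^{v_H} p^{e_H}$. The paper resolves this by observing that every minimal obstruction is an $\cF$-cluster (a graph generated by successively gluing copies of members of $\cF$ along shared edges) and running an explicit exploration process that bounds the number of relevant $k$-vertex candidate subgraphs by $(Ln)^k$; some such counting mechanism is indispensable and is entirely absent from your sketch. (Proving that minimal obstructions have bounded size, which would give your finite $\cG$, is precisely the strengthening the paper explicitly declines to carry out because it is harder than what is needed.)

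Second, your structural claim --- that in a minimal obstruction every edge is ``doubled'' and that this forces $e_H > m_2(\cF)\,v_H$ ``with room to spare'' --- is false as stated. The doubling property (the paper's notion of an $\cF$-core) does \emph{not} force density above $m_2(\cF)$: $K_6$ is a $K_4$-core with $e/v = 5/2 = m_2(K_4)$, every $3$-regular graph is a $C_4$-core with $e/v = 3/2 = m_2(C_4)$, and for families with $m_2(\cF)\le 2$ there are many further cores at density exactly $m_2(\cF)$. The deterministic lemma therefore cannot be deduced from the core property alone; the paper must invoke full minimality and exhibit explicit list-colourings of the exceptional cores ($K_6$ is not list-Ramsey for $K_4$, $3$-regular graphs admit $2$-list-colourings with forest colour classes, $K_5$ is not list-Ramsey for $K_3$), together with a discharging-plus-colour-extension argument covering the entire regime $m_2(\cF)\le 2$. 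Relatedly, the ``2-SAT'' framing of the list aspect does not deliver: avoiding a monochromatic copy of $F$ is a constraint on $e_F\ge 3$ binary variables, not a $2$-clause, and hypergraph $2$-list-colourability is not a tractable CSP in general; the paper uses only the soft fact that a minimally non-$2$-list-colourable hypergraph is a core, and then does the colouring work by hand.
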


In contrast, allowing forests into your life only invites chaos and strife. While we are able to handle infinite families of graphs in this case, we face a vast array of new exceptions.

The most drastic digression from Theorem~\ref{thm:general} is when our family contains a star forest. It is not hard to see that in this case the threshold corresponds to $G_{n,p}$ containing a vertex of a sufficiently high degree. This also means that the threshold is coarse.

\begin{thm}[Star forests]
\label{thm:star_forests}
 Let $r \ge 2$ be an integer and let $\cF$ be an family of graphs containing a star forest. Let $D$ be the smallest maximum degree of a star forest in $\cF$, and let $s \coloneqq r(D-1) + 1$. Then  \[
    \lim_{n \to \infty} \Pr \left( G_{n,p} \in \Ram_\ell(\cF, r) \right) =
    \begin{cases}
      1, & p \gg n^{-(1 + 1/s)}, \\
      0, & p \ll n^{-(1 + 1/s)}.
    \end{cases}
  \]
\end{thm}

When we exclude star forests, the threshold will stay at $n^{-1}$ (abiding the $2$-density).  The corresponding $1$-statement is a straightforward consequence of the fact that, for every integer $d$, every graph with average degree at least $2d$ contains a copy of every tree with $d$ edges.  Indeed, we can thus a.a.s.\ find a copy of any forest $F$ in the largest colour class of every $r$-colouring of $G_{n,p}$, provided that $p \ge c_1 \cdot n^{-1}$ for a large enough constant $c_1$. So again we only need to describe the $0$-statement. Unlike the generic case however, in some cases the $0$-statement will only hold if $p \ll n^{-1}$; that is, it may no longer be true when $p \le c \cdot n^{-1}$ for any given constant $c > 0$. These examples stem from interactions between two types of graphs that we define below.

\begin{dfn}
A \emph{broom} is a tree made from a path of length two by connecting to one of its endpoints an arbitrary number of leaves, called \emph{hairs} of the broom. We say that a graph $F$ is a \emph{$\cB$-graph} if every component of $F$ is a~subgraph of a broom.

We say that $F$ is a \emph{$\cC^*$-graph} if every component of $F$ is either a subgraph of an odd cycle or a~star.  
\end{dfn}

\begin{figure}[htb]
\centering
\begin{tikzpicture}[scale=0.5]

\foreach \i in {0,1,2} {
\draw[fill] (0,10 - 1.5*\i) circle (0.2);
}
\draw[line width=1.5pt] (0,10) -- (0,8.5) -- (0,7);

\foreach \i in {0,1,...,4} {
\draw[fill] (-2 + \i ,5.5) circle (0.2);
\draw[line width=1.5pt] (0,7) -- (-2 + \i, 5.5);
}

\end{tikzpicture}
\caption{A broom with five hairs.}
\end{figure}

\begin{thm}[Allowing forests: list-Ramsey]
  \label{thm:general-forests}
  Let $r \ge 2$ be an integer and let $\cF$ be an arbitrary family of graphs not including a star forest.  There exists a positive constant $c_0$ such that, for all $p \le c_0 \cdot n^{-1}$,
  \[
    \lim_{n \to \infty} \Pr(G_{n,p} \in \Ram_\ell(\cF, r)) = 0,
  \]
  unless $r = 2$ and $\cF$ contains both a $\cB$-graph and a $\cC^*$-graph. In the latter case, while $\Pr(G_{n,p} \in \Ram_\ell(\cF, r)) \to 0$ whenever $p \ll n^{-1}$, it is bounded away from zero for every $p = \Omega(n^{-1})$.
\end{thm}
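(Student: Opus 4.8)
The statement naturally splits into a zero-statement (the non-exceptional case, where $\Pr(G_{n,p}\in\Ram_\ell(\cF,r))\to 0$) and a positive statement (the exceptional case), and I would attack the two by rather different means. For the zero-statement, the first reduction is the usual monotonicity: since $\Ram_\ell(\cF,r)\subseteq\Ram_\ell(\cF,2)$ for every $r\ge 2$, and since the conditions ``$\cF$ contains no $\cB$-graph'' and ``$\cF$ contains no $\cC^*$-graph'' do not involve $r$, it suffices to treat $r=2$ in each of those two non-exceptional situations. Next I would fix $c_0<1$, so that a routine first-moment computation gives that a.a.s.\ every component of $G_{n,p}$ (for $p\le c_0 n^{-1}$) is a tree or is unicyclic; this is the only place randomness is used.

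The combinatorial heart of the zero-statement is then a deterministic local colouring lemma: for every tree or unicyclic graph $X$ and every assignment of colour-lists of size $2$ to $E(X)$, there is an admissible colouring of $X$ in which every colour class is a disjoint union of subgraphs of brooms (in the case ``$\cF$ has no $\cB$-graph''), respectively a disjoint union of odd cycles and stars (in the case ``$\cF$ has no $\cC^*$-graph''). Granting the lemma, colour each component of $G_{n,p}$ separately; since a non-$\cB$-graph $F\in\cF$ has a connected component that is not a subgraph of a broom, and since a connected subgraph of a disjoint union of broom-subgraphs is again a broom-subgraph, no colour class can contain any $F\in\cF$, whence $G_{n,p}\notin\Ram_\ell(\cF,2)$. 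To prove the lemma I would root $X$ — first breaking the cycle of a unicyclic component at one edge and colouring the cycle so that every monochromatic run has length $\le 2$, which is always achievable from size-$2$ lists — and then colour the remaining edges greedily from the root while maintaining the invariant that every partially built colour class is a disjoint union of broom-subgraphs and that each already-coloured vertex sits in a controlled position in its colour class; the two options in each edge's list provide exactly the slack needed to avoid lengthening a leg past $2$, creating a second long leg, or creating a second branch vertex.

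For the positive statement ($r=2$, with a $\cB$-graph $B\in\cF$ and a $\cC^*$-graph $C\in\cF$): since ordinary $2$-Ramseyness implies list-$2$-Ramseyness, and since $G_{n,p}$ with $p=\Omega(n^{-1})$ contains every fixed graph all of whose subgraphs have at most as many edges as vertices with probability bounded away from $0$, it suffices to construct a single fixed such graph $H^\ast$ with $H^\ast\to(\{B,C\})_2$. First I would simplify the targets: every $\cB$-graph embeds into $k$ disjoint copies of one sufficiently large broom, and every $\cC^*$-graph embeds into $m$ disjoint copies of a fixed disjoint union of odd cycles $C_{g_1},\dots,C_{g_t}$ and a large star, so it is enough to be $2$-Ramsey for this pair of canonical targets. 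I would then take $H^\ast$ to be a large disjoint union of ``hairy cycles'': for each $g_i$, many vertex-disjoint copies of $C_{g_i}$ with a pendant path of length two and many extra hairs attached at every cycle vertex. In any $2$-colouring of a single hairy cycle, either the cycle is monochromatic — giving a monochromatic $C_{g_i}$ — or two consecutive cycle-edges share a colour at some vertex $z$; chasing that colour through $z$'s pendant paths and its cyclic neighbours (already the triangle with three pendant length-two paths is $2$-Ramsey for $\{\text{fork},C_3\}$: a high-degree vertex is monochromatic in abundance, and either that abundance extends along a pendant path to a monochromatic broom, or the obstruction propagates to a neighbour forced to host one) yields a monochromatic broom together with a monochromatic large star. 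A pigeonhole over the two colours and the many disjoint copies then assembles a monochromatic copy of the full disconnected target in a single colour; since $H^\ast$ has no subgraph with more edges than vertices, $G_{n,p}$ contains it with probability bounded below whenever $p=\Omega(n^{-1})$.

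The hard part, in both directions, is forcing the ``obvious'' colouring to survive adversarial lists. In the zero-statement the naive decompositions of trees and unicyclic graphs into two star forests or two linear forests use only the colours $1,2$ and need not be realizable from arbitrary size-$2$ lists, so the greedy local lemma must be set up with precisely the right invariant, and the cycle of a unicyclic component handled by hand. In the positive statement the colour-chasing in the gadget must be carried out uniformly over brooms with arbitrarily many hairs and over $\cC^*$-graphs with several odd-cycle components, and the colours of the various components of the disconnected target must be coordinated across the many copies. The remaining points — that the relevant families of minimal forbidden connected subgraphs are finite, and that the local colouring lemma is stated for exactly the class of components that $G_{n,p}$ produces — are routine.
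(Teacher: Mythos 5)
Your zero-statement is essentially the paper's argument (restrict to tree/unicyclic components, then colour each component so that every colour class is a $\cB$-graph, respectively a $\cC^*$-graph), but your reduction to $r=2$ does not cover the full non-exceptional case: the negation of ``$r=2$ and $\cF$ contains both'' also includes ``$r\ge 3$ and $\cF$ contains both a $\cB$-graph and a $\cC^*$-graph'', and there monotonicity in $r$ is useless because $G_{n,p}$ \emph{is} $2$-list-Ramsey with positive probability. The paper handles this by showing that whenever $r\ge 3$ (or the lists are not all identical, or the component has no odd cycle) one can colour so that every colour class is a \emph{star forest}; you would need to add that case.

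The serious gap is in the exceptional case. You reduce it to constructing a fixed graph $H^\ast$ with $m(H^\ast)\le 1$ that is \emph{ordinarily} $2$-Ramsey for $\{B,C\}$, on the grounds that ordinary Ramseyness implies list-Ramseyness. But such an $H^\ast$ need not exist: take $\cF_1=\{B,\,C_3\cup C_5\}$ with $B$ a broom with two hairs. Then $\Aux(\cF_1)\cong K_2$ is $2$-colourable, and (as in Proposition~\ref{prop:BC-non-list}) every graph $G$ with $m(G)\le 1$ admits a $2$-colouring in which each colour class is a disjoint union of stars and odd cycles, with all triangle-like cycles in one colour and all $C_5$-like cycles in the other; this colouring contains neither a monochromatic $B$ nor a monochromatic $C_3\cup C_5$. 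So no amount of pigeonholing over two colours and many hairy-cycle copies can coordinate the colours of the two cycle components of the target, and your $H^\ast$ cannot exist. This is exactly the point where list-Ramsey and ordinary Ramsey diverge (see the remark following Proposition~\ref{prop:BC-non-list}): the paper's construction assigns to disjoint copies of the gadget the lists $\{a,b\}$ for \emph{all} pairs $\{a,b\}$ from a palette of $k+m+1$ colours, so that each component of the $\cC^*$-graph is forced to appear monochromatically in all but at most one palette colour, and a pigeonhole over the $k+m+1$ colours then finds a single colour containing all components simultaneously. Your gadgets ($F(B,C)$-type hairy cycles and the star gadget $T(B,S)$) are the right local pieces, but the assembly must go through lists over an enlarged palette, not through ordinary two-colour Ramseyness.
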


Interestingly, there are families $\cF$ containing $\cB$-graphs and $\cC^*$-graphs for which the usual Ramsey threshold is still semi-sharp.  This behaviour is determined by the $2$-colourability of an auxiliary hypergraph $\Aux(\cF)$, defined in Section~\ref{sec:forests}.

\begin{thm}[Allowing forests: usual Ramsey]
  \label{thm:non-list-forests}
  Let $r \ge 2$ be an integer and let $\cF$ be an arbitrary family of graphs not including a star forest. There exists a positive constant $c_0$ such that, for all $p \le c_0 \cdot n^{-1}$,
  \[
    \lim_{n \to \infty} \Pr(G_{n,p} \in \Ram(\cF, r)) = 0,
  \]
  unless $r = 2$ and $\Aux(\cF)$ is not $2$-colourable. In the latter case, while $\Pr(G_{n,p} \in \Ram(\cF, r)) \to 0$ whenever $p \ll n^{-1}$, it is bounded away from zero for every $p = \Omega(n^{-1})$.
\end{thm}

\begin{example}
  Consider $\cF \coloneqq \{B, C_3 \cup C_5\}$ where $B$ is a broom with at least two hairs. This family contains a $\cB$-graph and a $\cC^{*}$-graph, so we will have a coarse threshold on the list-Ramsey front when $r=2$. However, we will see in Section~\ref{sec:forests} that the auxiliary hypergraph corresponding to this family is isomorphic to $K_2$, and is therefore $2$-colourable. As a result, in the usual Ramsey sense we will still get a semi-sharp threshold. 
\end{example}

\subsection{Corollaries and applications}

Specialising the above theorems to the case where $\cF$ is a singleton gives us the following strengthening of Theorem~\ref{thm:rodl-rucinski}.

\begin{cor}[The list-Ramsey threshold]
Given $r \ge 2$ and a graph $F$, there are positive constants $c_0, c_1$ such that
  \[
    \lim_{n \to \infty} \Pr \left( G_{n,p} \in \Ram_\ell(F, r) \right) =
    \begin{cases}
      1, & p \ge c_1 \cdot n^{-1/m_2(F)}, \\
      0, & p \le c_0 \cdot n^{-1/m_2(F)},
    \end{cases}
  \]
  unless $F$ is a star forest, or $r=2$ and $F$ is the path on four vertices.
\end{cor}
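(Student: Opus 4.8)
The plan is to obtain the corollary by specialising Theorems~\ref{thm:general} and~\ref{thm:general-forests} to the singleton family $\cF = \{F\}$ (so that $m_2(\cF) = m_2(F)$) and combining them with the classical $1$-statement. The $1$-statement needs no new argument: if a graph is $r$-Ramsey for $F$ then it is also $r$-list-Ramsey for $F$, witnessed by the constant list assignment $e \mapsto \{1,\dots,r\}$; hence, for every $F$ outside the two exceptions of Theorem~\ref{thm:rodl-rucinski} — which are exactly the exceptions appearing in the corollary — the $1$-statement there at $p \ge c_1 n^{-1/m_2(F)}$ transfers verbatim to $\Ram_\ell(F,r)$.

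For the $0$-statement I would split on whether $F$ is a forest. If $F$ is not a forest then $m_2(F) > 1$, and Theorem~\ref{thm:general} applied to $\cF = \{F\}$ immediately yields a constant $c_0$ with the $0$-statement for all $p \le c_0 n^{-1/m_2(F)}$; note that the two exceptional graphs are forests, so none of them occurs in this case. If $F$ is a forest then either $F$ is a star forest, which is an excluded case, or $F$ has a component that is not a star and therefore contains $P_4$ as a subgraph; in the latter situation $m_2(F) = 1$, so $n^{-1/m_2(F)} = n^{-1}$, and Theorem~\ref{thm:general-forests} applied to $\cF = \{F\}$ and $r$ provides $c_0$ with the $0$-statement for all $p \le c_0 n^{-1}$, unless $r = 2$ and $F$ is simultaneously a $\cB$-graph and a $\cC^*$-graph.

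It thus remains to identify, for $r = 2$, the non-star-forests $F$ that are at once $\cB$-graphs and $\cC^*$-graphs, and to verify that $F = P_4$ is the only new exception this produces. The one structural fact to check is that a connected graph which embeds into some broom \emph{and} into some odd cycle or star is either a star or $P_4$: a connected subgraph of a broom is itself a star or a broom, and a broom with two or more hairs has a vertex of degree $\ge 3$ together with two vertices of degree $\ge 2$, so it embeds into no cycle and is not a star; this leaves the brooms with at most one hair, namely $P_1, P_2, P_3$ (which are stars) and $P_4$ (which embeds into $C_5$). Consequently every component of such an $F$ is a star or a copy of $P_4$, and — since $F$ is not a star forest — at least one component is a $P_4$; when $F$ is connected this forces $F = P_4$, exactly the exception in the statement. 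For $r \ge 3$ the case does not arise, as the exceptional clause of Theorem~\ref{thm:general-forests} requires $r = 2$.

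I expect the only step with any real content to be this elementary classification of the graphs that are simultaneously $\cB$- and $\cC^*$-graphs; everything else is bookkeeping that routes each $F$ to the appropriate one of the three $0$-statements (Theorems~\ref{thm:general} and~\ref{thm:general-forests}) and to Theorem~\ref{thm:rodl-rucinski} for the matching $1$-statement. The one place where a little care is warranted is that for \emph{disconnected} $F$ the classification only yields ``every component of $F$ is a star or $P_4$'', so a graph with a $P_4$-component together with additional star-components is, strictly speaking, also covered by the exceptional clause of Theorem~\ref{thm:general-forests}; the cleanest reading of the corollary is therefore as a statement about connected $F$, for which the sole new exception beyond star forests is $F = P_4$ with $r = 2$.
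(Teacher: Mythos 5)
Your derivation is correct and coincides with the paper's own (very brief) proof of this corollary: route the $0$-statement through Theorems~\ref{thm:general} and~\ref{thm:general-forests} applied to $\cF=\{F\}$, get the $1$-statement from Theorem~\ref{thm:rodl-rucinski} via constant lists, and classify the graphs that are simultaneously $\cB$-graphs and $\cC^*$-graphs. Your closing caveat is well taken and in fact sharper than the paper: the assertion that ``the only such graphs are star forests and the path on four vertices'' is literally false for disconnected $F$ such as the disjoint union of $P_4$ with a star, to which the exceptional clause of Theorem~\ref{thm:general-forests} does apply, so the corollary needs either the restriction to connected $F$ or the exception rephrased as ``every component of $F$ is a star or a $P_4$ (and $F$ is not a star forest)''.
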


This follows from the fact that the only exceptions can come from graphs $F$ that are simultaneously $\cB$-graphs and $\cC^{*}$-graphs. The only such graphs are star forests and the path on four vertices.

Another corollary of this work concerns asymmetric Ramsey thresholds. For graphs $H,L$ with $m_2(H) \ge m_2(L)$, define their \emph{mixed $2$-density} as
\[
  m_2(H,L) \coloneqq \max \left \{ \frac{e_{H'}}{v_{H'} - 2 + 1/m_2(L)} \colon \emptyset \neq H' \subseteq H \right \}.
\]
Given graphs $F_1, \dotsc, F_r$, we say that a graph $G$ is Ramsey for $(F_1, \dotsc, F_r)$ if any $r$-colouring of the edges of $G$ admits, for some $i \in \br{r}$, a copy of the graph $F_i$ in the colour~$i$. We denote this by $G \in \Ram(F_1, \dotsc, F_r)$. Kohayakawa and Kreuter~\cite{KohKre97} conjectured that the threshold for this property is governed by the mixed $2$-density of the two densest graphs.

\begin{conj}[Kohayakawa--Kreuter~\cite{KohKre97}]
  \label{conj:KK}
  Let $r \ge 2$ and let $F_1, \dotsc, F_r$ be graphs with $m_2(F_1) \ge \dotsb \ge m_2(F_r)$ and $m_2(F_2) > 1$. Then there are positive constants $c_0$ and $c_1$ such that 
  \[
    \lim_{n \to \infty} \Pr \left( G_{n,p} \in \Ram(F_1, \dotsc, F_r) \right) =
    \begin{cases}
      1, & p \ge c_1 \cdot n^{-1/m_2(F_1, F_2)}, \\
      0, & p \le c_0 \cdot n^{-1/m_2(F_1, F_2)}.
    \end{cases}
  \]
\end{conj}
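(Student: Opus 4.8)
The statement is the Kohayakawa--Kreuter conjecture, which is open in full generality; I will aim at the portion that the results above settle cleanly, namely the case in which the two densest graphs have the same $2$-density, and then isolate the obstruction to the rest. So write $m \coloneqq m_2(F_1) = m_2(F_2)$ (with $m_2(F_3), \dotsc, m_2(F_k)$ unrestricted apart from the standing hypothesis $m_2(F_k) > 1$). The first point is that under this hypothesis the conjectured exponent collapses: for every nonempty $H' \subseteq F_1$ with $v_{H'} \ge 2$, the definition of $m_2(F_1)$ gives $e_{H'} \le m(v_{H'} - 2) + 1 = m(v_{H'} - 2 + 1/m)$, and hence $e_{H'}/(v_{H'} - 2 + 1/m_2(F_2)) = e_{H'}/(v_{H'} - 2 + 1/m) \le m$, with equality on the subgraph attaining $m_2(F_1)$ (a single vertex gives a non-positive ratio). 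Thus $m_2(F_1, F_2) = m$, and it remains to prove a $1$-statement and a $0$-statement at density $n^{-1/m}$.

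For the $1$-statement I would symmetrise and invoke R\"odl--Ruci\'nski directly. Put $F \coloneqq F_1 \sqcup \dotsb \sqcup F_k$. Since the maximum $2$-density of a disjoint union equals the maximum of the $2$-densities of its components, $m_2(F) = \max_j m_2(F_j) = m > 1$; in particular $F$ is neither a star forest nor the path on four vertices, so Theorem~\ref{thm:rodl-rucinski} provides a constant $c_1$ such that a.a.s.\ $G_{n,p} \in \Ram(F, k)$ whenever $p \ge c_1 n^{-1/m}$. A monochromatic copy of $F$ in colour $j$ contains its $j$-th component, a copy of $F_j$, entirely in colour $j$; hence $\Ram(F, k) \subseteq \Ram(F_1, \dotsc, F_k)$ as sets of graphs, and the $1$-statement at $p \ge c_1 n^{-1/m_2(F_1,F_2)}$ follows.

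For the $0$-statement I would pass through the finite-family property. A two-colouring of the edges of a graph $G$ with no copy of $F_1$ in colour $1$ and no copy of $F_2$ in colour $2$ is, read as a $k$-colouring leaving colours $3, \dotsc, k$ empty, a certificate that $G \notin \Ram(F_1, \dotsc, F_k)$, so $\Ram(F_1, \dotsc, F_k) \subseteq \Ram(F_1, F_2)$; any two-colouring producing $F_1$ in colour $1$ or $F_2$ in colour $2$ produces, a fortiori, a monochromatic member of $\{F_1, F_2\}$, so $\Ram(F_1, F_2) \subseteq \Ram(\{F_1, F_2\}, 2)$; and assigning every edge the list $\{1,2\}$ gives $\Ram(\{F_1, F_2\}, 2) \subseteq \Ram_\ell(\{F_1, F_2\}, 2)$. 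The family $\{F_1, F_2\}$ is finite with $m_2(\{F_1, F_2\}) = \min(m_2(F_1), m_2(F_2)) = m > 1$, so Theorem~\ref{thm:general} supplies a constant $c_0$ with $\Pr(G_{n,p} \in \Ram_\ell(\{F_1, F_2\}, 2)) \to 0$ for $p \le c_0 n^{-1/m}$. As the properties in this chain are nested, so are the corresponding events, and the vanishing probability carries over to $\Ram(F_1, \dotsc, F_k)$; since $m = m_2(F_1, F_2)$, this is exactly the $0$-statement. Combined with the previous paragraph, this settles Conjecture~\ref{conj:KK} whenever $m_2(F_1) = m_2(F_2)$.

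The step I do not see how to carry out is the genuinely asymmetric range $m_2(F_1) > m_2(F_2)$. Then $m_2(F_1, F_2)$ lies strictly between $m_2(F_2)$ and $m_2(F_1)$, so the family $0$-statement, which only reaches down to $p \le c_0 n^{-1/m_2(\{F_1, F_2\})} = c_0 n^{-1/m_2(F_2)}$, falls far short of the conjectured $n^{-1/m_2(F_1,F_2)}$, while the symmetrisation through $F_1 \sqcup F_2$ is equally lossy on the other side, since $m_2(F_1 \sqcup F_2) = m_2(F_1) > m_2(F_1, F_2)$. Bridging the gap seems to demand an argument that uses the asymmetry head-on --- roughly, that at $p = \Theta(n^{-1/m_2(F_1,F_2)})$ one can two-colour $G_{n,p}$ so that the comparatively few edges lying in dense, $F_1$-rich configurations absorb all copies of $F_1$ while what remains is already $F_2$-free --- which is precisely the obstacle confronted, so far only in special cases such as cliques and cycles, in the literature on the conjecture, and which the family/list-Ramsey framework of this paper does not by itself resolve.
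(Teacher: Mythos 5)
The statement you were given is a conjecture that the paper does not prove (and that remains open); the paper only derives from Theorem~\ref{thm:general} the special case $m_2(F_1)=m_2(F_2)$, stated as a corollary. Your proposal correctly identifies this and establishes exactly that special case, and your $0$-statement is the paper's argument verbatim: the chain $\Ram(F_1,\dotsc,F_k)\subseteq\Ram(\{F_1,F_2\},2)\subseteq\Ram_\ell(\{F_1,F_2\},2)$ followed by Theorem~\ref{thm:general}, together with the observation that $m_2(F_1,F_2)=m_2(F_1)$ when the two $2$-densities coincide. The one point where you diverge is the $1$-statement: the paper leans on Mousset--Nenadov--Samotij, who proved the $1$-statement of the conjecture in full, whereas you symmetrise via the disjoint union $F_1\sqcup\dotsb\sqcup F_k$ and invoke Theorem~\ref{thm:rodl-rucinski}; this is valid (a monochromatic copy of the union in colour $j$ yields $F_j$ in colour $j$, and the union has $m_2$ equal to $\max_j m_2(F_j)=m_2(F_1,F_2)$ here and is not an exceptional graph), and it buys a self-contained proof of the $1$-statement in this special case at no real cost. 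Your closing diagnosis of why the genuinely asymmetric range $m_2(F_1)>m_2(F_2)$ is out of reach of these reductions is also accurate.
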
 

The $1$-statement of this conjecture was proved by Mousset, Nenadov, and Samotij~\cite{MouNenSam20}, but the $0$-statement remains a formidable foe. The original work of Kohayakawa and Kreuter~\cite{KohKre97} gives a proof in the case where the two densest graphs are cycles.  Since then, the $0$-statement in Conjecture~\ref{conj:KK} has been proved in the case where both $F_1$ and $F_2$ are cliques~\cite{MarSkoSpoSte09} and where $F_1$ is a clique and $F_2$ is a cycle~\cite{LieMatMenSko20}. More recently, Hyde~\cite{Hyde23} provided a proof for most pairs of regular graphs.

Note that $m_2(F_2) \le m_2(F_1, F_2) \le m_2(F_1)$ so when $F_1$ and $F_2$ have the same $2$-density, it also coincides with their mixed $2$-density. As a result, we can prove the following additional special case of Conjecture~\ref{conj:KK}.

\begin{cor}
  The Kohayakawa--Kreuter conjecture holds whenever $m_2(F_1) = m_2(F_2)$.
\end{cor}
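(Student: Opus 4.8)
The plan is to derive the statement from the list-Ramsey $0$-statement for finite families, Theorem~\ref{thm:general}, together with the already-known $1$-statement for asymmetric Ramsey properties. Write $m \coloneqq m_2(F_1) = m_2(F_2)$; as noted just above, the chain $m_2(F_2) \le m_2(F_1, F_2) \le m_2(F_1)$ forces $m_2(F_1, F_2) = m$, so it suffices to show that $n^{-1/m}$ is a threshold for $G_{n,p} \in \Ram(F_1, \dotsc, F_k)$. The $1$-statement --- that $G_{n,p} \in \Ram(F_1, \dotsc, F_k)$ a.a.s.\ once $p \ge c_1 n^{-1/m}$ --- is a special case of the theorem of Mousset, Nenadov, and Samotij~\cite{MouNenSam20}, so only the $0$-statement has to be established.

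For the $0$-statement, the idea is to discard the colours $3, \dotsc, k$ and reduce to the symmetric two-colour family-Ramsey property for $\cF \coloneqq \{F_1, F_2\}$. Concretely, I would verify the containments
\[
  \Ram(F_1, \dotsc, F_k) \subseteq \Ram(\cF, 2) \subseteq \Ram_\ell(\cF, 2).
\]
The second inclusion is immediate, since giving every edge the list $\{1, 2\}$ turns any witness of the symmetric property into a witness of the list property. For the first, suppose $G \notin \Ram(\cF, 2)$; then some $2$-colouring of $E(G)$ has no monochromatic member of $\cF$, in particular no copy of $F_1$ in colour $1$ and no copy of $F_2$ in colour $2$. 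Extend this colouring to a $k$-colouring by leaving colours $3, \dotsc, k$ empty; since each $F_i$ with $i \ge 3$ satisfies $m_2(F_i) > 1$ and hence is not a forest, it contains an edge and so has no copy in an empty colour class. Thus the extended colouring has no copy of $F_i$ in colour $i$ for any $i \in \br{k}$, i.e.\ $G \notin \Ram(F_1, \dotsc, F_k)$.

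Finally, $\cF$ is finite and $m_2(\cF) = \min\{m_2(F_1), m_2(F_2)\} = m > 1$, so Theorem~\ref{thm:general} provides a constant $c_0 > 0$ such that $G_{n,p} \notin \Ram_\ell(\cF, 2)$ a.a.s.\ whenever $p \le c_0 n^{-1/m}$; by the displayed containments, the same conclusion holds for $\Ram(F_1, \dotsc, F_k)$. As $n^{-1/m} = n^{-1/m_2(F_1, F_2)}$, this is exactly the $0$-statement of Conjecture~\ref{conj:KK}. I expect no genuine obstacle here: the entire argument is a short reduction, and all the real work is already carried out inside Theorem~\ref{thm:general}.
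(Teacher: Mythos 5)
Your proposal is correct and follows essentially the same route as the paper: the authors likewise observe that $m_2(F_1,F_2)=m_2(F_1)=m_2(F_2)$, take the $1$-statement from Mousset--Nenadov--Samotij, and obtain the $0$-statement by noting that a $2$-colouring avoiding both $F_1$ and $F_2$ witnesses $G_{n,p}\notin\Ram(F_1,\dotsc,F_k)$, reducing to the $0$-statement for $\Ram(\{F_1,F_2\},2)$, which follows from Theorem~\ref{thm:general}. Your only addition is spelling out the intermediate inclusion $\Ram(\cF,2)\subseteq\Ram_\ell(\cF,2)$, which the paper leaves implicit.
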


Indeed, if we can colour $G_{n,p}$ only using the first two colours without creating a copy of $F_1$ or $F_2$ in either colour, then $G_{n,p}$ is not Ramsey for $(F_1, \dotsc, F_k)$. Therefore, the $0$-statement for this case follows from the $0$-statement for the property $\Ram(\{F_1, F_2\}, 2)$.

\subsection{Organization of the paper}

We continue with Section~\ref{sec:outline} in which we outline the steps for proving Theorem~\ref{thm:general} and Theorem~\ref{thm:general-forests}. The proof of Theorem~\ref{thm:general}, the generic case, is completed in two steps, corresponding to Section~\ref{sec:prob_lemma} and Section~\ref{sec:det_lemma}. Finally, Section~\ref{sec:forests} provides the proofs for Theorems~\ref{thm:star_forests}, \ref{thm:general-forests}, and~\ref{thm:non-list-forests}, which describe the cases where forests are allowed.

Another feature of our work is a rather speedy proof of the original random Ramsey theorem of R\"odl and Ruci\'nski (i.e., Theorem~\ref{thm:rodl-rucinski}) that does not rely on the Nash-Williams theorem.  Because dealing with families adds certain complications for one of the stages of the proof, carried out in Section~\ref{sec:det_lemma}, we begin that section by first considering the singleton case. Consequently, the (semi-)interested reader could stop their perusal at page~\pageref{page:end-of-proof-of-RR}. 

\subsection{Acknowledgements} 
We thank the anonymous referee for their careful reading and helpful comments.

\section{Outline of the proof}
\label{sec:outline}

We start with a general discussion of our proof strategy.  Let $\cP$ be a monotone property of graphs and suppose we wish to show that $G_{n,p}$ avoids $\cP$ with high probability whenever $p \le c n^{-1/\alpha}$, for some positive constants $\alpha, c$.  
Suppose that $G$ is a fixed graph that has the property $\cP$.  If $m(G) \le \alpha$, then $G_{n,p} \supseteq G$, and thus $G_{n,p} \in \cP$ by monotonicity, with probability $\Omega(1)$.  As a result, to have any hope of proving our statement, we must be able to show the following:
\[
  \text{(D) Any $G$ with $m(G) \le \alpha$ avoids $\cP$}.
\]
This is a necessary condition and sometimes it is also sufficient.  Indeed, if $p \le c n^{-1}$ for $c$ sufficiently small, then $m(G_{n,p}) \le 1$ with high probability.  Actually, this will be the case for our statement whenever $m_2(\cF) = 1$, i.e., when we allow forests.  However, if $p \gg n^{-1}$, then, with high probability, the density of $G_{n,p}$ tends to infinity.

In the latter case, we require another structural statement about $G_{n,p}$.  One way to go about this is to find a family of graphs $\cQ$ that contains all the minimal elements of $\cP$ and such that the following holds with high probability:
\[
  \text{(P) Every graph $G \in \cQ$ with $G \subseteq G_{n,p}$ has $m(G) \le \alpha$.}
\]
We refer to this type of statement as a \emph{probabilistic lemma} and to the previous statement (D), which does not involve any randomness, as a \emph{deterministic lemma}.

Given these statements the result follows readily.  Indeed, if $G_{n,p} \in \cP$, then it must also contain some minimal element $G \in \cP$.  However, this would mean that $G \in \cQ$ and therefore, with high probability, that $m(G) \le \alpha$.  As a result, $G$ avoids $\cP$, a contradiction.

This is the usual approach for establishing $0$-statements for Ramsey thresholds, which was taken, for example, in the short proof of the R\"odl--Ruci\'nski theorem~\cite{RodRuc95} by Nenadov and Steger~\cite{NenSte16}.  This is also how we prove Theorems~\ref{thm:general-forests} and~\ref{thm:non-list-forests}.  For the generic case, we will take a slightly different approach (similar to the approach of Liebenau--Mattos--Mendon\c{c}a--Skokan~\cite{LieMatMenSko20}), which allows us to weaken the requirement of the probabilistic lemma at the cost of strengthening the deterministic lemma.  Indeed, instead of arguing about the maximum density of members of $\cQ$ we will only ask that, with high probability,
\[\text{(P$^-$) Every graph $G \in \cQ$ with $G \subseteq G_{n,p}$ has $e_G / v_G \le \alpha$.}\]
Subsequently, for the same proof to work, we need to show that the minimal elements of $\cP$ are themselves dense (and not just that they contain a dense subgraph):
\[\text{(D$^+$) Any minimal element $G \in \cP$ has $e_G / v_G > \alpha$.}\]

While the difference between (D) and (D$^+$) might look superficial, not being able to control the maximum density disallows some of the tools that were essential in the argument of R\"odl and Ruci\'nski (as well as that of Nenadov and Steger), such as the Nash-Williams theorem.  Instead, we build on the work from~\cite{FriKupSamSch22} and deploy a discharging argument that also allows us to deal with list-colouring.  One feature of this approach is that we get a proof that is self-contained and---due to the weakened probabilistic lemma---relatively short.  With more work, we could upgrade our proof of (P$^-$) to a proof of (P) by showing that, with high probability, every graph in $\cQ$ that appears in $G_{n,p}$ has a bounded number of vertices.  However, we do not pursue this here, as the weaker assertion (P$^-$) is sufficient for our purposes.

We now turn to applying this scheme in our context.  As we mentioned above, the case $m_2(\cF) = 1$ only requires~(D) and the $0$-statement holds if and only if (D) holds.  Since (D) fails for a larger class of families in the context of list-Ramsey property, as opposed to the classical Ramsey property, we obtain different threshold statements.  This is discussed fully in Section~\ref{sec:forests}.

Next, let $\cF$ be a finite family with $m_2(\cF) > 1$.  We may assume that $\cF$ consists of strictly $2$-balanced graphs.  (Recall that a graph $F$ is \emph{strictly $2$-balanced} if $m_2(F) > m_2(F')$ for every nonempty subgraph $F' \subseteq F$.)  Indeed, we may define $\cF'$ by going over every $F \in \cF$ and adding to $\cF'$ a strictly $2$-balanced subgraph $F' \subseteq F$ with the same $2$-density as $F$; such $F'$ exists since $m_2(F) > 1$.  By construction, $m_2(\cF) = m_2(\cF')$.  Moreover, any colouring of $G_{n,p}$ that avoids all graphs in $\cF'$ also avoids all graphs in $\cF$.  We may therefore replace $\cF$ and prove the result for $\cF'$ in its stead.

\begin{dfn}
  Given a graph $G$ and a family of graphs $\cF$, define the \emph{$\cF$-hypergraph of $G$} to be the hypergraph on the vertex set $E(G)$ whose hyperedges correspond to copies of $F$ in $G$, for all $F \in \cF$.
\end{dfn}

\begin{dfn}
  A graph $C$ is called an \emph{$\cF$-cluster} if the $\cF$-hypergraph of $C$ is connected.  Let $\cC_\cF$ be the family of all $\cF$-clusters.
\end{dfn}

The family $\cQ$ that we are going to choose in (P$^-$) is the family of all $\cF$-clusters;  it is not hard to see that a minimal Ramsey graph for $\cF$ must be an $\cF$-cluster.  This explicit description of $\cQ$ will allow us to prove a probabilistic lemma without much effort.

To summarise, the following two lemmas give us Theorem~\ref{thm:general}.

\begin{lemma}[The Probabilistic Lemma]
  \label{lem:gen-probabilistic}
  Let $\cF$ be a finite family of strictly $2$-balanced graphs.  There exists a positive constant $c$ such that, if $p \le cn^{-1/t}$, then a.a.s.\ every $\cF$-cluster $C \subseteq G_{n,p}$ satisfies $e_C / v_C \le t$.
\end{lemma}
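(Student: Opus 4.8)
The plan is a first-moment argument over a carefully restricted family of $\cF$-clusters. Write $t:=m_2(\cF)$, and for a graph $H$ set $f(H):=e_H-t\,v_H$; the conclusion $e_C/v_C\le t$ is exactly $f(C)\le 0$, so it is enough to show that a.a.s.\ $G_{n,p}$ contains no $\cF$-cluster $C$ with $f(C)>0$ — call such a $C$ \emph{bad}. Two preliminary remarks: first, every $F\in\cF$ is connected (a disconnected strictly $2$-balanced graph would contain a proper subgraph of the same $2$-density), so every $\cF$-cluster is connected and satisfies $v_C\le e_C+1$; second, writing $t=a/q$ in lowest terms, $f$ is always a multiple of $\tfrac1q$, so $f(C)>0$ forces $f(C)\ge\tfrac1q=:\delta$.

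The core of the argument is to track $f$ as a cluster is assembled one copy at a time. Since the $\cF$-hypergraph of a cluster $C$ is connected, its copies admit a connected ordering $K_1,\dots,K_m$: with $C_i:=K_1\cup\dots\cup K_i$ one has $E(K_i)\cap E(C_{i-1})\neq\emptyset$ for $i\ge 2$, and after discarding copies that add no new edge we may assume $m\le e_C$. Let $F_i\in\cF$ be the graph of which $K_i$ is a copy and $F_i':=K_i\cap C_{i-1}$ the overlap. Using $m_2(F_i)\ge t$ together with strict $2$-balancedness of $F_i$ (which gives $e_{F_i}=m_2(F_i)(v_{F_i}-2)+1$ and puts every proper nonempty subgraph strictly below the $m_2$-line), a short case analysis on $F_i'$ — treating its isolated vertices separately, each contributing a further $+t$ to $\Delta f$ — shows that $f(C_i)\ge f(C_{i-1})$, with equality \emph{exactly} when $F_i'$ is a single edge and $m_2(F_i)=t$. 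Call such a step \emph{neutral}; every other step raises $f$ by at least $\delta$. Consequently, if $C$ is bad and $i^\star$ is minimal with $f(C_{i^\star})>0$, then since $f(C_1)\ge -M$ for a constant $M=M(\cF)$ and $f$ is non-decreasing, the truncation $C^\star:=C_{i^\star}\subseteq C$ is again an $\cF$-cluster, is bad (so $f(C^\star)\ge\delta$), and is assembled using at most $K_0:=qM+1$ non-neutral steps.

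It thus suffices to prove $\Ex[X]=o(1)$, where $X$ counts the copies in $G_{n,p}$ of $\cF$-clusters $C^\star$ with $f(C^\star)\ge\delta$ that can be assembled — every copy adding at least one new edge — using at most $K_0$ non-neutral steps; a bad cluster in $G_{n,p}$ always contains such a $C^\star$, so $\Pr(G_{n,p}\text{ has a bad }\cF\text{-cluster})\le\Pr(X\ge 1)\le\Ex[X]$. Bounding the number of embeddings of $C^\star$ into $[n]$ by $n^{v_{C^\star}}$ and writing $p=c\,n^{-1/t}$,
\[
  \Ex[X]\ \le\ \sum_{C^\star} n^{v_{C^\star}}p^{e_{C^\star}}\ =\ \sum_{C^\star} c^{e_{C^\star}}\,n^{-f(C^\star)/t}\ \le\ n^{-\delta/t}\sum_{C^\star} c^{e_{C^\star}}.
\]
So everything reduces to bounding the number $g(e)$ of such clusters on $e$ edges by $g(e)\le B^e\cdot\mathrm{poly}(e)$ for a constant $B=B(\cF)$: then taking $c\le c_0:=\min\{1,\,1/(2B)\}$ makes $\sum_{C^\star}c^{e_{C^\star}}=O(1)$ and hence $\Ex[X]=O(n^{-\delta/t})=o(1)$. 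To count, fix $C^\star$ with assembly copies $K_1,\dots,K_m$ ($m\le e$), pick a spanning tree of the auxiliary graph on these copies in which two copies are adjacent when they share an edge (connected by the defining property of a connected ordering), root it, and encode $C^\star$ by a depth-first traversal of this tree together with, at each copy, its $\cF$-type and the bounded data describing how its tree-edge glues onto its parent, plus — only at the at most $K_0$ non-neutral copies — the identification of the remainder of the overlap, which costs a further factor of at most $(e+1)^{V}$ where $V:=\max_{F\in\cF}v_F$. Since a neutral copy attaches along a single edge with all its other vertices new, it is completely determined by the bounded data, so $C^\star$ is encoded by $O(e)$ symbols from a bounded alphabet and at most $(e+1)^{VK_0}$ extra choices, yielding $g(e)\le B^e\cdot\mathrm{poly}(e)$.

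The step demanding genuine care is exactly this last count. The naive estimate — that assembling a cluster copy-by-copy allows $\mathrm{poly}(v_{C_{i-1}})$ placements of each new copy — gives only $e^{\Theta(e)}$ clusters on $e$ edges, far too many to survive the factor $c^e$. The fix is the combination of the two ideas above: passing to \emph{tight} bad clusters, so that all but $O(1)$ of the assembly steps are neutral; and observing that neutral steps — attachments along a single edge, necessarily without vertex coincidences, since a coincidence would strictly increase $f$ — glue together in a tree-like fashion that costs only $O(1)$ bits per copy to record. The remaining ingredients — the value of $f$ on a single copy, the monotonicity and the neutral/increasing dichotomy, and the final expectation estimate — are routine.
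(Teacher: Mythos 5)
Your proposal is correct and follows the same overall strategy as the paper's proof: a first-moment/union bound over truncations of an edge-connected, copy-by-copy assembly of the cluster, driven by the dichotomy between steps that attach a minimum-$2$-density copy along a single edge with no further vertex coincidences (which preserve the functional $e-tv$) and all other steps (which increase it by a quantized amount $\ge 1/q$, hence can occur only $O(1)$ times before the cluster becomes bad). The two implementations differ in two places. First, the paper truncates its canonical exploration either at the $\Gamma$-th degenerate step or upon reaching $\log n$ vertices, and therefore has to handle a second class of certificates that are merely large ($v_S\ge\log n$) rather than strictly dense; your truncation at the first index where $e-tv$ becomes positive eliminates this case and slightly streamlines the final summation (no $p^{-2t}$ slack term is needed). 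Second, to count certificates the paper exploits its canonical choice of roots (the earliest-born eligible edge), so that the birth times of the roots form a non-decreasing sequence counted by $\binom{e_S+k}{k}$; you instead encode the cluster by a rooted spanning tree of the copy-intersection graph plus bounded gluing data per neutral copy. That encoding is sound but needs one more sentence than you give it: a DFS order need not agree with the assembly order, so a non-neutral copy's extra identifications may reference vertices of copies the decoder has not yet placed. One clean fix is to first build the ``fully tree-like'' cluster from the tree shape and the per-copy gluing data, and then record the vertex identifications contributed by the at most $K_0$ non-neutral copies as a quotient map, costing a single $\mathrm{poly}(e)$ factor; with that, the bound $g(e)\le B^e\cdot\mathrm{poly}(e)$ and the whole argument go through.
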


\begin{lemma}[The Deterministic Lemma]
  \label{lem:gen-deterministic}
  Let $\cF$ be a family of strictly $2$-balanced graphs, each containing a cycle.  Every graph $G$ that is minimally list-Ramsey with respect to $\cF$ satisfies $e_G/v_G > m_2(\cF)$.
\end{lemma}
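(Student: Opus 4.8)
The plan is a covering-and-discharging argument. Write $t \coloneqq m_2(\cF)$; since every member of $\cF$ contains a cycle, $t > 1$. Fix a list assignment $L$ with $|L(e)| = 2$ for every edge that witnesses $G \in \Ram_\ell(\cF, 2)$. The first task is to read off the structure forced by minimality. For each edge $e$, the subgraph $G - e$ is not list-Ramsey, so $L$ restricted to $G - e$ has a colouring $c_e$ with no monochromatic member of $\cF$; extending $c_e$ by giving $e$ either colour $\gamma \in L(e)$ produces an $L$-colouring of $G$, which therefore contains a monochromatic member of $\cF$, necessarily one through $e$. So there is a copy $F_{e,\gamma} \ni e$ of some member of $\cF$ all of whose other edges are coloured $\gamma$ by $c_e$, and since $F_{e,\gamma} - e$ has an edge, the two copies obtained as $\gamma$ ranges over $L(e)$ are distinct. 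Hence \emph{every edge of $G$ lies in at least two copies of members of $\cF$}, so $G$ is the union of the copies it contains; being a minimal list-Ramsey graph, $G$ is moreover an $\cF$-cluster, so we may list those copies as $C_1, \dots, C_N$ with $\bigcup_i C_i = G$ and each $C_i$ $(i \ge 2)$ sharing an edge with $U_{i-1} \coloneqq C_1 \cup \dots \cup C_{i-1}$; after pruning the list we may assume each such $C_i$ contributes at least one edge outside $U_{i-1}$, so that $D_i \coloneqq C_i \cap U_{i-1}$ is a nonempty \emph{proper} subgraph of $C_i$.

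Next comes the density bookkeeping. Telescoping along the sequence,
\[
  e_G - t\,v_G \;=\; (e_{C_1} - t\,v_{C_1}) + \sum_{i=2}^{N} g_i, \qquad g_i \coloneqq (e_{C_i} - e_{D_i}) - t\,(v_{C_i} - v_{D_i}).
\]
Write $F_i \in \cF$ for the graph of which $C_i$ is a copy, so $e_{C_i} = m_2(F_i)(v_{C_i} - 2) + 1$ and $m_2(F_i) \ge t$. If $D_i$ is a single edge then $g_i = (m_2(F_i) - t)(v_{C_i} - 2) \ge 0$; if $D_i$ has at least two edges (hence at least three vertices), then strict $2$-balancedness of $F_i$ gives $e_{D_i} - 1 < m_2(F_i)(v_{D_i} - 2)$, and so $g_i > (m_2(F_i) - t)(v_{C_i} - v_{D_i}) \ge 0$. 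Thus $g_i \ge 0$ always, and $g_i = 0$ can occur only when $D_i$ is a single edge \emph{and} $m_2(F_i) = t$; in every other case $g_i$ is a positive rational from a finite set depending only on $\cF$, hence $g_i \ge \delta$ for a constant $\delta = \delta(\cF) > 0$. Since $e_{C_1} - t\,v_{C_1} = (m_2(F_1) - t)\,v_{C_1} - 2m_2(F_1) + 1 \ge 1 - 2t$, we get $e_G - t\,v_G \ge (1 - 2t) + \sum_{i \ge 2} g_i$, so the lemma reduces to
\[
  \sum_{i=2}^{N} g_i \;>\; 2t - 1 .
\]
As $2t - 1 > 1$, this genuinely demands that the gains be bounded away from $0$ in total; informally, $G$ cannot be built from a single copy by a sequence of break-even single-edge gluings of minimum-density members (nor by anything close to that).

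The last step --- proving $\sum_i g_i > 2t - 1$ --- is where the list-Ramsey hypothesis must be used beyond Step~1, and I expect it to be the main obstacle. I would establish it by a colouring-extension argument, in the spirit of the discharging in~\cite{FriKupSamSch22}: process the copies in the order $C_1, \dots, C_N$, colouring the not-yet-coloured edges of each $C_i$ from their lists so as never to complete a monochromatic member of $\cF$, while maintaining a budget spread over edges and copies; the gain $g_i$ bounds the colouring freedom acquired at step~$i$, and one aims to show that if $\sum_{i \ge 2} g_i \le 2t - 1$ then the process never gets stuck, the Step~1 fact that every edge lies in at least two copies being the input that keeps the cluster rich enough to set up the charge redistribution. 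A successful $L$-colouring of $G$ contradicts $G \in \Ram_\ell(\cF, 2)$, so $\sum_{i \ge 2} g_i > 2t - 1$; combined with Step~2 this yields $e_G - t\,v_G > (1 - 2t) + (2t - 1) = 0$, that is, $e_G / v_G > m_2(\cF)$.
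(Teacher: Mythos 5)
Your Steps 1 and 2 are sound: minimality does force every edge into two copies, the telescoping identity is correct, and the sign analysis of the gains $g_i$ (nonnegative always, zero only for a single-edge gluing of a copy with $m_2(F_i)=t$) follows from strict $2$-balancedness exactly as in the paper's own proof of the Probabilistic Lemma. But the reduction they achieve is essentially vacuous: the inequality $\sum_{i\ge 2} g_i > 2t-1$ is, by your own telescoping, just a restatement of $e_G - t\,v_G > 0$, i.e.\ of the lemma itself. All of the content is deferred to Step 3, which you do not carry out, and which cannot be carried out by any argument that uses only the structural facts from Steps 1--2. The obstruction is that there are genuine equality cases: for $\cF=\{K_4\}$ and $G=K_6$ one has a connected $\cF$-cluster in which every edge lies in many copies, built entirely from break-even single-edge gluings, with $e_G/v_G = 5/2 = m_2(K_4)$ exactly; likewise $3$-regular graphs for $C_4$ and $K_5$ for $K_3$. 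These graphs satisfy every hypothesis your bookkeeping can see, so proving $\sum g_i > 2t-1$ for minimally list-Ramsey $G$ requires exhibiting explicit proper list-colourings of these tight configurations (and of all near-tight ones), which is precisely where the difficulty lives.

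The paper attacks the problem from the opposite direction. It upgrades your ``every edge lies in two copies'' to the stronger core property (for every copy $F$ and every edge $e\in F$ there is another copy meeting $F$ exactly in $e$), which follows from minimal non-$2$-list-colourability of the $\cF$-hypergraph. It then applies a discharging lemma to any $G$ with $e_G/v_G \le m_2(\cF)$ to locate a low-degree local configuration (a vertex of degree $\le 2k$, or an edge with degrees $2k+1$ and $\le 2k+2$, or a degree-$(2k+3)$ vertex with two degree-$(2k+1)$ neighbours), and uses the Helpful Lemma to show that such a configuration cannot support the overlapping copies a core demands --- except in the cases $K_3$, $K_4$, $C_4$, which are resolved by classifying the exceptional cores ($K_5$, $K_6$, $3$-regular graphs) and colouring them by hand. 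If you want to salvage your outline, you would have to make the ``colouring-extension with a budget'' of Step 3 precise, and in doing so you would inevitably reconstruct these case analyses; as written, the proposal has a genuine gap at its central step.
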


The proofs of these lemmas are presented in the next two sections.  While we prove the Probabilistic Lemma straight away, we will take a more cautious approach with the Deterministic Lemma.  Since our argument is somewhat technical, we first treat the case where $\cF$ is a singleton\footnote{This also allows us to finish the proof of the $0$-statement in the R\"odl--Ruci\'nski theorem earlier.}.  The proof for single graphs already gives a good idea of how the general argument works and lays much of the groundwork towards it.

\section{Proof of the Probabilistic Lemma}
\label{sec:prob_lemma}

Let $\Cbad$ be the collection of all $C \in \cC_{\cF}$ such that  $e_C / v_C > t$.  We will show that there exist positive constants $\eps$ and $L$ and a family $\cS$ of subgraphs of $K_n$ with the following properties:
\begin{enumerate}[label=(\alph*)]
\item
  \label{item:sig-1}
  Every set in $\Cbad$ contains some element of $\cS$.
\item
  \label{item:sig-2}
  Every $S \in \cS$ satisfies $e_S \ge t \cdot v_S + \eps$, or both $v_S \ge \log n$ and $e_S \ge t \cdot (v_S - 2)$.
\item
  \label{item:sig-3}
  For every $k$, there are at most $(Ln)^k$ graphs $C \in \cS$ with $v_C = k$.
\end{enumerate}

We first show that the existence of such a collection $\cS$ implies the assertion of the lemma.  Indeed, it follows from~\ref{item:sig-1} and the union bound that
\[
  \Pr(\text{$C \subseteq G_{n,p}$ for some $C \in \Cbad$}) \le \Pr(\text{$S \subseteq G_{n,p}$ for some $S \in \cS$}) \le \sum_{S \in \cS} p^{e_S}.
\]
Moreover, by~\ref{item:sig-2} and~\ref{item:sig-3},
\[
  \begin{split}
    \sum_{S \in \cS} p^{e_S} & \le \sum_{k} (Ln)^k \cdot \left(p^{tk+\eps} + \1_{k \ge \log n} \cdot p^{t(k-2)}\right) \\
    & \le \sum_{k} (c^tL)^k \cdot p^{\eps} + \sum_{k \ge \log n} (c^t L)^{k} \cdot p^{-2t}.
  \end{split}
\]
We now choose $c$ sufficiently small so that $c^t L \le \min\{e^{-4t}, 1/2\}$. This way, the first sum above is at most $p^{\eps}$ and the second sum is at most $2 \cdot (n^2p)^{-2t}$, so both tend to zero.

To find the family $\cS$, we will first define an exploration process of the clusters. We first fix a labeling of the vertices of $K_n$, which induces an ordering of all subgraphs according to the lexicographical order. Now, given $C \in \cC_{\cF}$, we start with $C_0 \coloneqq \{e_0\}$, where $e_0$ is the smallest edge of $C$.  As long as $C_i \neq C$, do the following: Since $C \neq C_i$ and $C$ is an $\cF$-cluster, there must be a copy of some $F \in \cF$ in $C$ that intersects $C_i$ but is not fully contained in $C_i$.  Call such a copy \emph{regular} if it has $2$-density exactly $t$ and intersects $C_i$ in exactly one edge $e$, called its \emph{root}; otherwise, call the copy \emph{degenerate}.  We form $C_{i+1}$ from $C_i$ by adding to it one such overlapping copy of $F$ as follows: If there is a regular copy of some $F \in \cF$, add the smallest one among the copies rooted at the edge that arrived to $C$ the earliest among all edges of $C_i$; otherwise, add the smallest degenerate copy of $F$.  Finally, given an integer $\Gamma$ let
\[
  \tau = \tau(C) \coloneqq \min\{i : \text{$i$ is the $\Gamma$-th degenerate step or $v_{C_i} \ge \log n$ or $C_i = C$}\}
\]
and let
\[
  \cS \coloneqq \{C_{\tau(C)} : C \in \Cbad\}.
\]
This definition guarantees that $\cS$ satisfies~\ref{item:sig-1} above; we will show that, for sufficiently large $\Gamma$ and $L$ and a sufficiently small $\eps$, it also satisfies~\ref{item:sig-2} and~\ref{item:sig-3}.

\begin{claim}
  There exist $\Gamma = \Gamma(\cF) > 0$ and $\eps = \eps(\cF) > 0$ such that item~\ref{item:sig-2} holds.
\end{claim}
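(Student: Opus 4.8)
The plan is to analyze the exploration process and track a suitable potential function as copies are added. For a graph $H$ appearing during the process, set $\Phi(H) \coloneqq e_H - t \cdot v_H$; I want to show that along the exploration, $\Phi$ never drops too low, and in fact the stopping graph $C_{\tau}$ satisfies item~\ref{item:sig-2}. First observe that a regular step adds a copy of some $F \in \cF$ with $2$-density exactly $t = m_2(\cF)$ that meets the current graph in exactly one edge; since $F$ is strictly $2$-balanced with $m_2(F) = t$, adding it contributes exactly $e_F - 1 = t(v_F - 2)$ new edges and $v_F - 2$ new vertices, so $\Phi$ is unchanged by a regular step. Hence $\Phi$ only changes at degenerate steps and at the very first step (where $\Phi(C_0) = 1 - t$).

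Next I would quantify the effect of a degenerate step. A degenerate copy of $F \in \cF$ either has $2$-density strictly greater than $t$, or meets the current graph $C_i$ in at least two edges (or a single vertex, or more). In the first case, writing $a$ and $b$ for the number of new vertices and new edges, one has $b \ge t(a-1) + \delta$ for some $\delta = \delta(\cF) > 0$, because $\cF$ is finite and each $m_2(F) - t$, when positive, is bounded below; this pushes $\Phi$ up by at least $\delta - t$ relative to a regular step, but more importantly, combined with the fact that a degenerate step spanning $\ge 2$ old edges adds at most $e_F - 2$ edges and $v_F - 2$ vertices, each degenerate step changes $\Phi$ by an amount bounded in absolute value by a constant depending only on $\cF$, and — this is the key point — I want to show the \emph{total} contribution of the degenerate steps is bounded below. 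The cleanest route: show that each degenerate step either increases $\Phi$ by at least some fixed $\delta' = \delta'(\cF) > 0$, or else the process has, in a bounded number of further steps, forced $v_{C_i} \ge \log n$. Since there are at most $\Gamma$ degenerate steps before $\tau$, and since $C_{\tau} \in \cS$ comes from $C \in \Cbad$ so that $e_C/v_C > t$ forces the process to have run long enough to accumulate positive potential, choosing $\Gamma$ large enough that $\Gamma \cdot \delta' \ge 1 - t + \eps$ (with $\eps$ a small constant) gives $\Phi(C_{\tau}) \ge \eps$, i.e. $e_{C_\tau} \ge t \cdot v_{C_\tau} + \eps$, unless the process stopped early because $v_{C_\tau} \ge \log n$, in which case we are in the second alternative of~\ref{item:sig-2} (here $e_{C_\tau} \ge t(v_{C_\tau} - 2)$ follows since $\Phi$ started at $1 - t \ge -2t$ and never decreased below that — regular steps preserve it, degenerate steps eventually only help).

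The main obstacle is the bookkeeping around degenerate steps that overlap the current graph in a complicated way (multiple edges, or vertices but few edges), where $\Phi$ could in principle decrease: one must verify that such an overlap, because the added copy is a subgraph of some strictly $2$-balanced $F$, cannot decrease $\Phi$ by more than a controlled amount, and that the "bad" degenerate steps (those that fail to increase $\Phi$) are precisely the ones that rapidly inflate the vertex count or are limited in number. Concretely, if a degenerate copy of $F$ shares $k \ge 2$ edges and $j$ vertices with $C_i$, it adds $v_F - j$ vertices and $e_F - k$ edges, changing $\Phi$ by $(e_F - k) - t(v_F - j) = (e_F - 1 - t(v_F - 2)) + (t(j-2) - (k - 1)) \ge 0 + (t(j-2) - (k-1))$; bounding $k - 1$ against $t(j - 2)$ using that any $j$-vertex subgraph of the strictly $2$-balanced $F$ has at most $t(j-2) + 1$ edges when $j \ge 3$ handles $j \ge 3$, and the small cases $j \le 2$ (single shared vertex or single shared edge with higher density) are the ones contributing the strict gain $\delta'$. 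Finally, the bound on the number of degenerate steps being at most $\Gamma$ is built into the definition of $\tau$, so no separate argument is needed there; the work is entirely in establishing the per-step potential inequality with a uniform positive gap, which is where finiteness of $\cF$ is used essentially.
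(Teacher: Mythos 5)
Your overall strategy is the same as the paper's: track the potential $\Phi(C_i) = e_{C_i} - t\,v_{C_i}$ (the paper uses the affinely equivalent $(e_{C_i}-1) - t(v_{C_i}-2)$), observe that regular steps leave it unchanged, and show that every degenerate step increases it by a uniform $\eta = \eta(\cF) > 0$, so that $\Gamma = O(t/\eta)$ degenerate steps suffice to force $e_S \ge t\,v_S + \eps$. However, your execution of the key per-step estimate has a genuine gap. For a degenerate copy of $F$ meeting $C_{i-1}$ in a subgraph $F'$ with $j \ge 3$ vertices and $k \ge 2$ edges, you decompose the change in $\Phi$ as $\bigl[(e_F-1) - t(v_F-2)\bigr] + \bigl[t(j-2) - (k-1)\bigr]$ and invoke the claim that every $j$-vertex subgraph of $F$ has at most $t(j-2)+1$ edges. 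That claim is false whenever $m_2(F) > t$ (consider $F' = F$ minus one vertex), and even when $m_2(F) = t$ it yields only that the change is $\ge 0$, whereas the argument needs a \emph{strictly} positive gain, uniform over $\cF$; without it, reaching the $\Gamma$-th degenerate step does not force $e_S \ge t v_S + \eps$, and your explicit statement that the gain $\delta'$ comes only from the cases $j \le 2$ concedes exactly this. The fix is the mediant inequality applied directly to the increment $(e_F - e_{F'})/(v_F - v_{F'})$: strict $2$-balancedness gives $(e_{F'}-1)/(v_{F'}-2) < (e_F-1)/(v_F-2)$, hence $(e_F - e_{F'})/(v_F - v_{F'}) > (e_F-1)/(v_F-2) \ge t$, and finiteness of $\cF$ turns this strict inequality into a uniform gap.

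A second gap is the terminal case $C_{\tau(C)} = C$, which you do not treat (the stopping rule has three exit conditions, not two). If the exploration exhausts $C$ before the $\Gamma$-th degenerate step and before reaching $\log n$ vertices, the potential argument only gives $e_S \ge t(v_S - 2)$ with $v_S$ possibly small, which does not verify item~\ref{item:sig-2}. Here one must use that $S = C \in \Cbad$ directly: $e_S > t v_S$ together with the rationality of $t$ (so that $b e_S$ and $b t v_S$ are integers for some fixed $b = b(\cF)$) upgrades the strict inequality to $e_S \ge t v_S + 1/b$. Your remark that $e_C/v_C > t$ ``forces the process to have run long enough to accumulate positive potential'' is not a substitute for this integrality step. (Two minor points: $\Phi(C_0) = 1 - 2t$, not $1 - t$; and a degenerate copy cannot meet $C_{i-1}$ in only a vertex, since the $\cF$-hypergraph lives on $E(G)$, so that case need not be considered.)
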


\begin{proof}
  We first argue that~\ref{item:sig-2} holds in the case where $S \in \Cbad$.  To this end, note that $t$ is a rational number and therefore there is an integer $b$ that depends only on $\cF$ such that $bt$ is an integer.  Since both $b e_S$ and $b t v_S$ are integers and $e_S > t v_S$, as $S \in \Cbad$, we must have $be_S \ge btv_S + 1$, which means that~\ref{item:sig-2} holds with $\eps = 1/b$.

  Consider an arbitrary $S \in \cS \setminus \Cbad$ and let $C \in \Cbad$ be a cluster such that $S = C_{\tau(C)} \neq C$.  Let $d_i$ denote the number of degenerate steps taken when constructing $C_i$. It suffices to show that there exists a positive $\eta = \eta(\cF)$ such that
  \begin{equation}
    \label{eq:degenerate-steps-balance}
    (e_{C_i} - 1) - t \cdot (v_{C_i} - 2) \ge \eta \cdot d_i.
  \end{equation}
  Indeed, set $\Gamma \coloneqq \lceil 2t/\eta \rceil$.  Inequality~\eqref{eq:degenerate-steps-balance} with $i = \tau(C)$ may be rewritten as $e_S - t \cdot v_S \ge 1 - 2t + \eta \cdot d_{\tau(C)}$.  Consequently, if $d_{\tau(C)} = \Gamma$, then $e_S \ge t \cdot v_S + 1 - 2t + \eta \cdot \Gamma \ge t \cdot v_S + 1$.  Otherwise, if $d_{\tau(C)}< \Gamma$, we only know that $e_S \ge t \cdot v_S - 2t+1$, but it must be that $v_S \ge \log n$, by the definition of $\tau$ and since we assumed that $S \neq C$.
  
  We now turn to proving~\eqref{eq:degenerate-steps-balance}; we do so by induction on $i$. Note that $C_0$ is an edge and therefore
  \[
    (e_{C_0} - 1) - t \cdot (v_{C_0} - 2) = 0.
  \]
  From there on, we wish to show that the left-hand side of~\eqref{eq:degenerate-steps-balance} does not change with any regular step, while it grows by at least $\eta$ with any degenerate step.
  
  Indeed, with every regular step we add a copy of $F \in \cF$ without one edge, meaning that $e_{C_i} = e_{C_{i-1}} + e_F - 1$ and $v_{C_i} = v_{C_{i-1}} + v_F - 2$, where, by the regularity condition, $(e_F -1) - t \cdot (v_F - 2) = 0$. Therefore, a regular step will not change the left-hand side. On the other hand, with any degenerate step we have two options. In our extending step, we used a graph $F \in \cF$ which either intersected $C_{i-1}$ in one edge, but had $2$-density strictly larger than $t$, or it intersected $C_{i-1}$ in at least two edges. In the first case, we know that $(e_F - 1) - t\cdot (v_F-2) > 0$. Since our family $\cF$ is finite, we may define $\eta_1$ to be the minimum of $(e_F - 1) - t\cdot (v_F-2)$ over all $F \in \cF$ with $m_2(F) > t$. For the second case, if the intersection is in a strict subgraph $F' \subsetneq F$ with $e_{F'} \ge 2$, then the left-hand side grows by $(e_F - e_{F'}) - t\cdot (v_F - v_{F'})$. Using the fact that $F$ is strictly $2$-balanced and $2 \le e_{F'} < e_F$, we have $(e_{F'}-1)/(v_{F'}-2) < (e_F-1)/(v_F-2)$ and, consequently, \[t \le \frac{e_F - 1}{v_F - 2} = \frac{(e_F - e_{F'}) + (e_{F'} - 1) }{(v_F - v_{F'}) + (v_{F'} - 2)}  < \frac{e_F - e_{F'}}{v_F - v_{F'}}.\] So $(e_F - e_{F'}) - t\cdot (v_F - v_{F'}) > 0$, and again we can take $\eta_2$ to be the minimum of this quantity over all possible $F$ and $F'$. Therefore, \eqref{eq:degenerate-steps-balance} holds with $\eta = \min\{\eta_1, \eta_2\}$.
\end{proof}

\begin{claim}\label{clm:components_number}
  There exists a constant $L = L(\cF)$ such that item~\ref{item:sig-3} holds.
\end{claim}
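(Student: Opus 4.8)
The plan is to bound the number of $C \in \cS$ with $v_C = k$ by the number of \emph{transcripts} of the exploration process that can produce such a graph, a transcript being a record of exactly the data needed to rebuild the sequence $C_0, C_1, \dots, C_{\tau(C)}$ one step at a time. Since, given $C$, this sequence is completely determined by the (deterministic) rules of the process, every $S \in \cS$ with $v_S = k$ is the output of the reconstruction procedure applied to some transcript; hence it suffices to show that the number of transcripts whose output has $k$ vertices is at most $(Ln)^k$. Write $M \coloneqq \max\{v_F, e_F : F \in \cF\}$ and recall that $\Gamma = \Gamma(\cF)$ is a constant.

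A transcript will consist of the initial edge $e_0$ (at most $n^2$ choices) together with, for each step $i = 1, \dots, \tau(C)$: the index in $\br{|\cF|}$ of the graph $F \in \cF$ whose copy is added; a bit recording whether the step is regular or degenerate; the location in $C_{i-1}$ of the subgraph along which the new copy of $F$ is glued; and the labels in $[n]$, listed as an ordered tuple, of the $v_{C_i} - v_{C_{i-1}}$ vertices that are new at step $i$. The gluing data and these labels together determine the embedded copy, so the transcript determines $C_{\tau(C)}$. The ``new label'' contributions multiply to at most $n^{\sum_i (v_{C_i} - v_{C_{i-1}})} = n^{k-2}$, which with the $n^2$ for $e_0$ is exactly $n^k$; it remains to bound by $L^k$ the product over all steps of the gluing contributions and of the $O_\cF(1)$ per-step choices of $F$ and of the regular/degenerate bit.

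The degenerate steps are cheap because there are at most $\Gamma$ of them: a degenerate copy is glued along a subgraph of $C_{i-1}$ on at most $M$ vertices, which we may specify by an ordered tuple of at most $M$ vertices of $C_{i-1}$ (at most $k^M$ options, as $v_{C_{i-1}} \le k$) together with the isomorphism type of the gluing ($O_\cF(1)$ options), so all degenerate steps together contribute at most $k^{M\Gamma} \cdot O_\cF(1)$. The regular steps, of which there may be $\Theta(k)$, are the crux, since here a $\mathrm{poly}(k)$ factor per step would be fatal; the point is that a regular copy is glued along a single edge --- its \emph{root} --- and the root chosen at step $i$ is the edge of $C_{i-1}$ with the smallest birth time among those rooting a regular copy that lies in $C$ but not in $C_{i-1}$. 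I would show that this minimum birth time is nondecreasing in $i$: letting $V(C_i)$ denote the set of edges of $C_i$ that root a regular copy present in $C$ but not in $C_i$, one checks $V(C_{i+1}) \subseteq V(C_i) \cup \{\text{edges born at step } i+1\}$, and the latter edges have birth time exceeding that of every edge of $C_i$. Consequently the birth times of the roots of the successive regular steps form a nondecreasing sequence of length at most $k$ with entries in $\{0, \dots, \tau(C)\} \subseteq \{0, \dots, k + \Gamma\}$ (each regular step adds $v_F - 2 \ge 1$ new vertices, as $F$ contains a cycle, so there are at most $k$ of them); there are only $2^{O_\cF(k)}$ such sequences, and once one is fixed each regular step needs only to name which of the at most $M$ edges born at the prescribed time is its root, and in which of the $O_\cF(1)$ ways the copy sits over that edge --- at most $O_\cF(1)^k$ over all regular steps.

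Multiplying everything gives at most $n^k \cdot O_\cF(1)^k \cdot k^{M\Gamma}$ transcripts with output on $k$ vertices, and since $k^{M\Gamma} \le (M\Gamma)! \cdot 2^{k + M\Gamma}$ this is at most $(Ln)^k$ once $L = L(\cF)$ is taken large enough, establishing~\ref{item:sig-3}. The one genuinely delicate ingredient is the nondecreasing-birth-time observation for the roots: without it, the naive way of locating the root at each regular step costs $k^{\Theta(k)}$, which is super-exponential in $k$ and ruins the bound, so that is where I expect the argument to require care; everything else is bookkeeping.
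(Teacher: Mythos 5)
Your proposal is correct and follows essentially the same route as the paper: both encode the exploration by recording per-step gluing data and new vertex labels, absorb the polynomially many choices at the at most $\Gamma$ degenerate steps, and handle the regular steps via the key observation that the birth times of successive roots form a non-decreasing sequence, so that the whole root sequence costs only $2^{O_\cF(k)}$ rather than $k^{\Theta(k)}$. The paper counts the root sequence directly as a non-decreasing $\br{e_S}$-valued sequence bounded by $\binom{e_S+k}{k}$, while you count non-decreasing birth-time sequences and then disambiguate among the $O_\cF(1)$ edges born at each step --- a cosmetic difference only.
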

\begin{proof}
  Suppose that $S \in \cS$ has $k$ vertices.  Let $C \in \Cbad$ be such that $S= C_{\tau(C)}$ and consider the exploration process on $C$.  If the $i$-th step is degenerate, it can be uniquely described by specifying the graph $F \in \cF$, the intersection $F'$ of $C_{i-1}$ with $F$, and the sequence of $v_F - v_{F'}$ vertices of $K_n$ that complete it to this copy of $F$. If the $i$-th step is regular, it can be uniquely described by its root in $C_{i-1}$, the graph $F \in \cF$, the edge of $F$ that corresponds to the root, and the sequence of $v_F - 2$ vertices of $K_n$ that complete it to a copy of $F$.

  There are at most $n^k$ ways to choose the ordered sequence of vertices that were added by the exploration process.  Since each regular step adds at least one new vertex to the cluster, there are at most $k$ regular steps.  Further, since the number of degenerate steps is at most $\Gamma$, we have $\tau(C) \le k+\Gamma$.  In particular, there are at most $(k+\Gamma) \cdot 2^{k+\Gamma}$ ways to choose $\tau(C)$ and designate which of the $\tau(C)$ steps were regular and which were degenerate.  For every degenerate step, there are at most \[\sum_{F \in \cF} \sum _{\ell=2}^{v_F} \binom{v_F}{\ell} \cdot k^\ell \le |\cF| \cdot (k+1)^{M_v} \] ways to choose $F \in \cF$ and its intersection with $S$, where $M_v \coloneqq \max \{v_F \colon F \in \cF \}$.  As for the regular steps, since: the root at the $i$-th step is the edge that was added to $C_{i-1}$ the earliest among all those that are a part of a regular copy; and a copy that is regular at some step $j$ is regular at every earlier step $i$ as long as its root belongs to $C_{i-1}$, the sequence of `birth times' of all roots in the exploration process is non-decreasing.  In particular, the number of ways to choose the sequence of roots for all the regular steps is at most the number of non-decreasing $\br{e_S}$-valued sequences of length at most $k$, which is at most $\binom{e_S+k}{k}$.  Finally, since every step increases the number of edges of $C_i$ by at most $M_e \coloneqq \max \{ e_F \colon F \in \cF \}$, we have
  \[ e_S \le 1 + \tau(C) \cdot M_e \le 1 + (k + \Gamma) \cdot M_e. \]
  To summarise,
  \[
    |\cS| \le n^k \cdot (k+\Gamma) \cdot 2^{k+\Gamma} \cdot \left( |\cF|  \cdot (k+1)^{M_v} \right)^\Gamma \cdot \binom{(k+\Gamma) \cdot M_e + k + 1}{k} \cdot \left(|\cF| \cdot M_e\right)^k,
  \]
  which is at most $(Ln)^k$, provided that $L$ is sufficiently large.
\end{proof}
Since we have already shown how the existence of a collection $\cS$ satisfying items~\ref{item:sig-1}--\ref{item:sig-3} implies the assertion of the lemma, the proof is now complete.

\section{Proof of the Deterministic Lemma}
\label{sec:det_lemma}

Suppose that a graph $G$ is minimally list-Ramsey with respect to $\cF$.  It is not hard to see that $G$ must be an $\cF$-cluster.  We further claim that $G$ must have a certain `rigidity' property that is slightly stronger than the property that every edge of $G$ is the sole intersection of some two copies of graphs from~$\cF$.

\begin{dfn}
  A connected hypergraph $H$ is a \emph{core} if, for every edge $e \in H$ and vertex $v \in e$, there is an edge $e' \in H$ such that $e \cap e' = \{v\}$.
\end{dfn}

\begin{fact}
  Every minimally non-$2$-list-colourable hypergraph is a core.
\end{fact}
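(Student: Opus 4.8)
The plan is to argue by contradiction. Suppose $H$ is minimally non-$2$-list-colourable but is \emph{not} a core, so there is an edge $e \in H$ and a vertex $v \in e$ such that no edge $e' \in H$ satisfies $e \cap e' = \{v\}$; I will deduce that $H$ is in fact $2$-list-colourable, a contradiction. Before treating this local condition I would quickly dispose of connectivity: if $H$ (after discarding isolated vertices, which do not affect list-colourability) were disconnected, one of its components $H_1$ would already be non-$2$-list-colourable, and then deleting any edge lying outside $H_1$ would leave a non-$2$-list-colourable hypergraph, contradicting minimality. So $H$ is connected, and it remains to establish the local property.

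For the local property I fix an \emph{arbitrary} assignment $L$ of colour lists of size (at least) two to $V(H)$ and must produce a proper $L$-colouring. By minimality, $H - e$ is $2$-list-colourable, so there is a proper $L$-colouring $c$ of $H - e$. If $c$ happens to be proper for $H$, we are done; otherwise $e$ is monochromatic under $c$, say with colour $\alpha$, so $c(w) = \alpha$ for every $w \in e$, and in particular $\alpha \in L(v)$. Choose $\beta \in L(v) \setminus \{\alpha\}$ and let $c'$ agree with $c$ except that $c'(v) = \beta$. Now I check that $c'$ is proper for all of $H$: the edge $e$ is no longer monochromatic since $|e| \ge 2$ and $c'(v) = \beta \neq \alpha = c'(w)$ for $w \in e \setminus \{v\}$; any edge avoiding $v$ lies in $H - e$ and is untouched, hence still non-monochromatic; and any edge $e'' \neq e$ with $v \in e''$ satisfies $e \cap e'' \neq \{v\}$ by our assumption, so it contains a vertex $u \in e \setminus \{v\}$, for which $c'(u) = c(u) = \alpha \neq \beta = c'(v)$, so $e''$ carries two colours. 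Hence $c'$ is a proper $L$-colouring; since $L$ was arbitrary, $H$ is $2$-list-colourable — the desired contradiction.

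The argument is short, and the only place demanding care is the last step: verifying that recolouring the single vertex $v$ cannot create a monochromatic edge, which is precisely where the hypothesis "$e \cap e'' \neq \{v\}$ for all edges $e''$" is used. The one genuine caveat is the tacit assumption that all hyperedges have size at least two (needed to conclude that $e$ becomes bichromatic); an edge of size at most one would render $H$ trivially non-$2$-list-colourable and minimality would then force $H$ to consist of that single edge, a degenerate case one excludes at the outset. Apart from this, the proof is essentially the bookkeeping above, so I do not anticipate a serious obstacle.
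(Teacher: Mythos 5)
Your argument is correct and is essentially the paper's own proof: colour $H - e$ from the lists by minimality, and if $e$ comes out monochromatic, flip the colour of $v$, using the failure of the core condition at $(e,v)$ to guarantee every edge through $v$ still meets $e$ in a vertex of the old colour. You additionally check connectivity (which the definition of a core requires and the paper's proof leaves implicit) and flag the size-one-edge degeneracy; both are harmless refinements of the same argument.
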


\begin{proof}
  Let $H$ be a non-core hypergraph whose every proper subhypergraph is $2$-list-colourable.  We will show that $H$ is also $2$-list-colourable.  Indeed, suppose we assign a list of two colours to every vertex of $H$.  Since it is not a core, there is an edge $e \in H$ and a vertex $v \in e$ such that no edge $e' \in H$ intersects $e$ solely in $v$.  Since the hypergraph $H \setminus e$ is $2$-list-colourable, we may properly colour its vertices from their lists.  If $e$ is not monochromatic under this choice of colours, then we have coloured $H$.  Otherwise, all vertices of $e$ were assigned the same colour. In this case, switch the colour of $v$ to the other option in its list.  We claim that this gives a proper colouring of $H$.  Indeed, $e$ is no longer monochromatic; moreover, if this change of colour affected some edge $e'$, then it must contain $v$.  However, if this is the case, the by our assumption $e'$ must intersect $e$ in at least one more vertex $v'$.  But since the colour of $v$ was changed just so it is different than the colour of all other vertices in $e$, then now $v$ and $v'$ are coloured distinctly, and therefore $e'$ cannot be monochromatic.
\end{proof}

We are now ready to define our rigidity property.  We will say that a graph $G$ is an \emph{$\cF$-core} if the $\cF$-hypergraph of $G$ contains a spanning, connected subhypergraph that is a core\footnote{This does not mean that the $\cF$-hypergraph of $G$ itself is a core.  For example, the $\{K_4, K_5\}$-hypergraph of $K_6$ is not a core. However, the subhypergraph given by the $K_4$-hyperedges is a core, making $K_6$ a $\{K_4, K_5\}$-core.}.  If $\cF$ does not contain any graphs with $2$-density at most two, the assertion of the Deterministic Lemma will hold under the weaker assumption that $G$ is a connected $\cF$-core\footnote{Since every strictly $2$-balanced graph is connected, all minimally list-Ramsey graphs for a family of strictly $2$-balanced graphs are connected as well.}, with the single exceptional case $G = K_6$, which is treated in Claim~\ref{claim:K6-not-list-Ramsey} below.

\begin{prop}
  \label{prop:cores-family}
  Suppose that $\cF$ is a family of strictly $2$-balanced graphs with
  $m_2(\cF) > 2$.  If a connected $\cF$-core $G$ satisfies $e_G/v_G \le
  m_2(\cF)$, then $G=K_6$.
\end{prop}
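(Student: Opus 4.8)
The plan is to read off $G = K_6$ from the hypothesis $e_G \le m_2(\cF)\,v_G$ by first bounding $\delta(G)$ from below using the core property, and then, in the tight regime that this forces, analysing the local structure at a vertex. Write $t := m_2(\cF)$. Two preliminary remarks. First, every $F \in \cF$ is strictly $2$-balanced with $m_2(F) \ge t > 2$, so $v_F \ge 4$; moreover, for any vertex $w$ of $F$, applying strict $2$-balancedness to the proper subgraph $F - w$ (which has $\ge 3$ vertices) gives $\frac{e_F - \deg_F(w) - 1}{v_F - 3} \le m_2(F-w) < m_2(F) = \frac{e_F - 1}{v_F - 2}$, and a routine rearrangement turns this into $\deg_F(w) > m_2(F) \ge t$; in particular $\delta(F) \ge 3$. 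Second, by the core hypothesis there is a spanning, connected subhypergraph $H$ of the $\cF$-hypergraph of $G$ that is a core: its hyperedges are copies of members of $\cF$ covering all of $E(G)$, and for every such copy $K$ and every edge $f \in E(K)$ there is a copy $K'$ in $H$ with $E(K) \cap E(K') = \{f\}$.

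From here I would derive $\delta(G) \ge 5$. Indeed, any edge $uv$ of $G$ lies in some copy $K$ in $H$, and the core property applied to $(K, uv)$ yields a second copy $K'$ with $E(K) \cap E(K') = \{uv\}$; then the only common $K$/$K'$-neighbour of $u$ is $v$, so $\deg_G(u) \ge \deg_K(u) + \deg_{K'}(u) - 1 > 2t - 1$, whence $\deg_G(u) \ge 2\lfloor t\rfloor + 1 \ge 5$. Summing, $2 e_G \ge (2\lfloor t\rfloor + 1)\,v_G$, which together with $e_G \le t\,v_G$ forces $t \ge \lfloor t\rfloor + \tfrac12$; this already shows the statement is vacuous when $t < \tfrac52$, and in the boundary case $t = \tfrac52$ it forces equality throughout: $e_G = \tfrac52 v_G$ and $G$ is $5$-regular.

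For the remaining densities I would sharpen the density estimate by running the exploration of Section~\ref{sec:prob_lemma} on $H$: ordering the hyperedges $K_1, \dots, K_m$ so that $G_j := K_1 \cup \dots \cup K_j$ is connected as it grows, and dropping redundant copies, the quantity $\psi(J) := t\,v_J - e_J$ satisfies $\psi(G_j) - \psi(G_{j-1}) = \psi(F_j) - \psi(K_j \cap G_{j-1})$, which --- exactly as in the proof of the Probabilistic Lemma, using strict $2$-balancedness and $m_2(F_j) \ge t$ --- is $\le 0$, and strictly negative unless the step is \emph{regular}: $m_2(F_j) = t$ and $K_j$ meets $G_{j-1}$ in a single edge. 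Hence $\psi(G) \le \psi(F_1) \le 2t-1$, i.e.\ $e_G \ge t\,v_G - (2t-1)$. The core property now has to be fed back in: since \emph{every} edge of \emph{every} copy is shared with another copy, no copy can sit ``pendant'' on the rest along a single edge, and turning this into a quantitative statement should give $\psi(G) \le 0$, with equality only in a rigid configuration --- all steps regular and $t = \tfrac52$. Combined with the hypothesis $\psi(G) = t v_G - e_G \ge 0$, this traps us in that rigid case: $t = \tfrac52$ and $G$ is $5$-regular.

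It then remains to identify such a $G$. When $\cF = \{K_4\}$ this is quick: for $u \in V(G)$ and each $v \in N_G(u)$, the edge $uv$ lies in a $K_4$ on $\{u,v,x,y\}$ and, by the core property, in a $K_4$ on $\{u,v,x',y'\}$ with $\{x',y'\}$ disjoint from $\{x,y\}$; since $\deg_G(u) = 5$ this makes $\{v,x,y,x',y'\} = N_G(u)$ and $v$ adjacent in $G$ to $x,y,x',y'$, so $G[N_G(u)] = K_5$, $G[\{u\} \cup N_G(u)] = K_6$, and connectedness gives $G = K_6$. For a general family one runs the analogous, heavier local analysis of the copies through a fixed vertex. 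Finally, $K_6$ genuinely is a connected $\cF$-core with $e_{K_6}/v_{K_6} = \tfrac52$ in the relevant cases (e.g.\ $\cF = \{K_4\}$ or $\{K_4,K_5\}$), so the proposition is not vacuous there. I expect the main obstacle to be the density-sharpening step together with this endgame for general families: for a single clique the min-degree and link reasoning already pins everything down, but when $\cF$ is an arbitrary finite family the copies can be of several graphs of differing $2$-density with sparse (e.g.\ bipartite) links, so the crude degree count no longer traps the density at $\tfrac52$ and one really does need the charging argument, with a careful extraction from the core property of how much overlap is forced, both to exclude the range $m_2(\cF) > \tfrac52$ and to cut the equality case down to $K_6$.
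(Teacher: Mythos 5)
Your opening moves are sound and match the paper's: the bound $\deg_F(w) > m_2(F)$ is exactly the Helpful Lemma with $|W|=1$, and the deduction $\delta(G) \ge 2\lfloor t\rfloor + 1$ from the core property is the paper's Case~1, which correctly disposes of all $t$ whose fractional part is below $1/2$ and, at $t = 5/2$, forces $5$-regularity. Your endgame for $\cF = \{K_4\}$ is also essentially Claim~\ref{claim:K4-core-is-K6}. The gap is everything in between, and it is genuine. First, your exploration bound $\psi(G) \le \psi(F_1) \le 2t-1$ is compatible with the hypothesis $\psi(G) \ge 0$, so it yields no contradiction, and the proposed strengthening to $\psi(G) \le 0$ via ``no copy can sit pendant along a single edge'' is both unproven and untenable: the core property is perfectly consistent with copies attached along single edges (that is precisely a regular step), and your own rigidity claim is internally inconsistent, since an all-regular exploration gives $\psi(G) = 2t-1 = 4 \neq 0$ at $t = 5/2$, while the actual extremal object $K_6$ has $\psi(K_6)=0$ and is assembled from \emph{degenerate} overlaps of $K_4$'s. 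So the entire regime where the fractional part of $t$ is at least $1/2$ (e.g.\ $\cF = \{K_6\}$ with $t = 7/2$) is not handled.

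Second, you never establish the structural fact that drives the conclusion for general families: that $K_4 \in \cF$ and that $G$ is a \emph{$K_4$-core} (not merely an $\cF$-core). The paper gets this from a purely local count: the Discharging Lemma produces either an edge $uv$ with $\deg(u)=2k+1$, $\deg(v)\le 2k+2$, or a vertex of degree $2k+3$ with two neighbours of degree $2k+1$; around such a tight configuration, the Helpful Lemma with $|W|=2$ (resp.\ $|W|=3$) shows that two copies meeting only in $uv$ would need to touch more edges than the degrees allow --- unless one of them has at most four vertices, i.e.\ is $K_4$. This is what forces $m_2(\cF) \le 5/2$, hence $= 5/2$, pins $G$ down as a $5$-regular $K_4$-core, and hands the problem to the $\{K_4\}$ endgame you already have. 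Without that step, ``run the analogous, heavier local analysis'' has no content for, say, $\cF$ consisting of several large graphs of differing $2$-densities, and the third discharging configuration (which requires its own three-vertex edge count) is not addressed at all. In short: replace the global $\psi$-accumulation by the local discharging trichotomy and the Helpful Lemma edge counts on two- and three-vertex sets; that is where the proof actually lives.
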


As there do exist $\cF$-cores $G$ with $e_G/v_G \le m_2(\cF)$ for many families $\cF$ with $m_2(\cF) \le 2$, this case will require a different argument that uses the full power of the assumption that $G$ is minimally list-Ramsey.

\begin{prop}
  \label{prop:eGvG-more-than-two}
  If $G$ is minimally list-Ramsey for some family $\cF$ of strictly $2$-balanced graphs,
  each containing a cycle, then $e_G/v_G > \min\{m_2(\cF), 2\}$.
\end{prop}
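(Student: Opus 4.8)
The plan is to combine the two previously stated results — Proposition~\ref{prop:cores-family} and the separate treatment of $K_6$ (Claim~\ref{claim:K6-not-list-Ramsey}) — to reduce to the genuinely new case where $m_2(\cF) \le 2$. First I would observe that since $G$ is minimally list-Ramsey for $\cF$, it is in particular a minimally non-$2$-list-colourable $\cF$-hypergraph, hence (by the Fact) its $\cF$-hypergraph is a core; moreover $G$ is connected because every $F \in \cF$ is connected (being strictly $2$-balanced) and a minimally list-Ramsey graph must be an $\cF$-cluster. Thus $G$ is a connected $\cF$-core. If $m_2(\cF) > 2$, then Proposition~\ref{prop:cores-family} forces $e_G/v_G > m_2(\cF)$ unless $G = K_6$, and Claim~\ref{claim:K6-not-list-Ramsey} rules out $G = K_6$ (it asserts $K_6$ is not list-Ramsey, contradicting minimality); either way $e_G/v_G > m_2(\cF) = \min\{m_2(\cF),2\}$. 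So it remains to handle $m_2(\cF) \le 2$, where we must show $e_G/v_G > 2$, using the full strength of minimal list-Ramseyness and not merely the core property.

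For the case $m_2(\cF) \le 2$, the target is $e_G / v_G > 2$, i.e.\ $e_G \ge 2v_G - 1$ (with room to spare). Here I would exploit that every $F \in \cF$ contains a cycle and is strictly $2$-balanced, so each copy of $F$ inside $G$ is $2$-dense in a strong sense. The natural strategy is contrapositive: suppose $e_G \le 2 v_G$ (or just $e_G/v_G \le 2$) and construct an explicit list-assignment of $2$ colours to $E(G)$ together with a proper colouring, contradicting $G \in \Ram_\ell(\cF, 2)$. A sparse graph with $e_G \le 2v_G$ has a vertex of degree at most $4$, and more usefully its edge set decomposes into few ``simple'' pieces; the idea is to find a small edge set whose removal destroys all copies of members of $\cF$ (or makes the remaining $\cF$-hypergraph $2$-list-colourable), colour the rest, and then fix up the removed edges one at a time using the core structure exactly as in the proof of the Fact. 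One should be able to quantify this: if $e_G/v_G \le 2$ then a constant fraction of vertices have bounded degree, and one peels off low-degree vertices to obtain an ordering in which each copy of some $F$ can be ``broken'' by recolouring a single edge at the vertex where that copy was last completed — the cycle inside $F$ guarantees a vertex of degree $\ge 2$ within the copy, giving the needed flexibility, while $m_2(\cF) \le 2$ ensures copies are not too entangled.

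The main obstacle I anticipate is the $m_2(\cF) \le 2$ case, precisely because — as the paragraph before the proposition warns — the core property alone is insufficient there (there really are $\cF$-cores of density $\le 2$), so one must design the list-assignment carefully and invoke that $G$ is \emph{minimally} list-Ramsey, i.e.\ that deleting any edge makes $G$ list-$2$-colourable. The delicate point is to turn ``$G - e$ is list-colourable for every $e$'' plus ``$G$ is sparse'' into a global colouring of $G$ itself: one wants to choose lists so that a colouring of $G - e$ extends, which requires the copies of $\cF$-graphs through $e$ to be individually avoidable by a single local colour swap, and this is where strict $2$-balancedness and the presence of a cycle in each $F$ must be used to control how the last edge of each copy sits relative to the rest. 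I would expect the proof to proceed by induction on $v_G$, removing a carefully chosen bounded-degree vertex or edge, applying the inductive hypothesis to the smaller graph, and re-inserting with a swap argument modelled on the proof of the Fact above — but with the sparsity hypothesis $e_G/v_G \le 2$ doing the work of guaranteeing that such a vertex or edge exists at every stage.
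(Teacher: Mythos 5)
Your reduction of the case $m_2(\cF) > 2$ to Proposition~\ref{prop:cores-family} and Claim~\ref{claim:K6-not-list-Ramsey} is sound (modulo the harmless slip that the target there is $e_G/v_G > 2$, not $> m_2(\cF)$, so the stronger conclusion of Proposition~\ref{prop:cores-family} suffices a fortiori). The genuine gap is that for $m_2(\cF) \le 2$ you offer a plan rather than an argument, and the plan contains a quantifier error: to show $G \notin \Ram_\ell(\cF,2)$ you must exhibit a proper colouring for \emph{every} assignment of lists, so you never get to ``choose lists so that a colouring of $G-e$ extends'' --- the lists are adversarial. The entire difficulty of the list version lies in handling non-identical lists, and the one observation that unlocks it is absent from your proposal: at any vertex $v$ of degree three, the three incident lists must be identical (otherwise rainbow-colour the edges at $v$ and use $\delta(F)\ge 2$ for every $F\in\cF$), and this is what lets one pick, for a given $\cF$-free colouring of $G-v$, a colour from those lists that is rare at the relevant neighbour of $v$.

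Beyond that, ``peel off a vertex of degree at most $4$ and break each copy by recolouring one edge'' does not go through as stated. The paper instead applies the discharging lemma (Lemma~\ref{lem:discharging}) with $k+\eps=e_G/v_G$ to locate one of a few precise local configurations (a degree-$3$ vertex with a neighbour of degree at most $4$; a degree-$5$ vertex with two nonadjacent degree-$3$ neighbours; or, when $e_G/v_G=2$ exactly, a minimum-degree vertex $u$ minimising $e(N(u))$), and for each configuration gives an explicit extension of an $\cF$-free colouring of the smaller graph, verified via the Helpful Lemma (e.g.\ no $F$ with $m_2(F)>3/2$ contains a path of length three whose two interior vertices have degree two). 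You would also need the boundary cases your sparsity heuristic cannot dispatch: $e_G/v_G\le 3/2$ forces $G$ to be $3$-regular, handled by Claim~\ref{claim:3-regular-2-colouring}, and the $4$-regular case with $e(N(u))\ge 5$ everywhere forces $G=K_5$, which requires the separate ad hoc argument of Claim~\ref{claim:K5-list-colourable}. Finally, the induction on $v_G$ you propose is neither needed (minimality already gives that every proper subgraph, in particular $G-u$, is list-colourable) nor obviously well-founded, since a proper subgraph of a minimally list-Ramsey graph is not itself minimally list-Ramsey.
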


Finally, for the sake of providing a short, self-contained proof of the $0$-statement in the R\"odl--Ruci\'nski theorem (Theorem~\ref{thm:rodl-rucinski}), we will separately treat the case where $\cF$ comprises only one graph.  For brevity, we write $F$-core in place of $\{F\}$-core.  The following single-graph analogue of Proposition~\ref{prop:cores-family} characterises all $F$-cores $G$ with $e_G/v_G \le m_2(F)$ for all strictly $2$-balanced graphs $F$ that contain a cycle, except the triangle.  Clearly, $K_6$ is not Ramsey for $K_4$ and it is not difficult to prove that every $3$-regular graph is a union of two forests (Claim~\ref{claim:3-regular-2-colouring} below is a generalisation of this statement); showing that no graph $G$ with $e_G/v_G \le m_2(K_3)$ is minimally Ramsey for $K_3$ is only a little more difficult, see Claim~\ref{claim:K5} below.

\begin{prop}
  \label{prop:cores-single-graph}
  Suppose that $F$ is a strictly $2$-balanced graph with $m_2(F) > 1$.  If a~connected $F$-core $G$ satisfies $e_G/v_G \le m_2(F)$, then either:
  \begin{enumerate}
  \item
    $F = K_3$,
  \item
    $F = K_4$ and $G = K_6$, or
  \item
    $F = C_4$ and $G$ is $3$-regular.
  \end{enumerate} 
\end{prop}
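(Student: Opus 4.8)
The plan is to run the whole argument through the core property, using the density hypothesis $e_G/v_G\le m_2(F)$ only via the bound it gives on the average degree. Write $m:=m_2(F)$, and fix a spanning connected subhypergraph $\cH$ of the $F$-hypergraph of $G$ that is a core. Since $m>1$ the graph $F$ contains a cycle, and a routine check shows a strictly $2$-balanced graph with a cycle has no degree-$1$ vertex, so $\delta(F)\ge 2$. The key local observation is: if $v\in V(G)$ has degree $d$ and $g$ is an edge of $G$ at $v$, then any two copies of $F$ in $G$ that both contain $g$ (hence both contain $v$) share at least $2\delta(F)-d$ of the $d$ edges at $v$, since each contributes at least $\delta(F)$ of them. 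Applying this with $\phi$ a copy in $\cH$ through $g$ and $\phi'$ the $\cH$-partner supplied by the core property for the pair $(\phi,g)$, the equality $\phi\cap\phi'=\{g\}$ forces $2\delta(F)-d\le 1$; together with $d\ge\delta(F)$ (as $v$ lies in some copy), we get $\delta(G)\ge 2\delta(F)-1$, and hence $m\ge e_G/v_G\ge\delta(F)-\tfrac12$.

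This already settles, vacuously, every $F$ with $m_2(F)<\delta(F)-\tfrac12$: there is no connected $F$-core with $e_G/v_G\le m_2(F)$ at all. That class contains all cliques $K_r$ with $r\ge 5$, all cycles $C_\ell$ with $\ell\ge 5$, all wheels, the Petersen graph, and — as a finite-parameter check would confirm — all strictly $2$-balanced graphs except a short list of small dense ones. The three genuine exceptions live at or just above the boundary $m_2(F)=\delta(F)-\tfrac12$. For $F=C_4$: $\delta(G)\ge 3$ while $e_G/v_G\le\tfrac32$, so $G$ is $3$-regular. For $F=K_3$: we learn only $\delta(G)\ge 3$ and $e_G/v_G\le 2$, which is consistent (e.g.\ with $K_4$ and $K_5$), matching the assertion that nothing more is claimed about $G$. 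For $F=K_4$: $\delta(G)\ge 5$ and $e_G/v_G\le\tfrac52$ force $G$ to be $5$-regular; then, fixing a vertex $u$ and a $K_4$ in $\cH$ through $u$ and applying the core property to its three edges at $u$, one is forced to declare every one of the $\binom{5}{2}$ pairs inside $N_G(u)$ an edge, so $\{u\}\cup N_G(u)$ spans a $K_6$, and $5$-regularity together with connectedness gives $G=K_6$.

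For the remaining graphs $F$ — strictly $2$-balanced, $m_2(F)\ge\delta(F)-\tfrac12$, not one of $K_3,K_4,C_4$ — I would sharpen the same idea into a near-bipartite statement and then count. For all of these $F$ (as the classification would show) every edge of $F$ is incident to a vertex of degree larger than $\delta(F)$; calling such vertices of $F$, and the vertices of $G$ occupying such a position in some copy of $\cH$, the \emph{hubs}, the incident-edge count from the first step shows that a vertex of $G$ of sufficiently low degree cannot be a hub, and chasing $\cH$-partners forces every neighbour of such a non-hub into the hub set $D$. Hence $V(G)\setminus D$ is an independent set of low-degree vertices, while $D$ is a vertex cover of $G$. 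Feeding $|D|$, $|V(G)\setminus D|$, the degree lower bounds on the two parts, and $e_G\le m v_G$ into a pair of counting inequalities then yields a bound of the shape $a\,|D|\le|V(G)\setminus D|\le b\,|D|$ with $a>b\ge 0$, a contradiction. (This is how, for instance, $F=K_{2,t}$ with $t\ge 3$ and $F=K_5-e$ are eliminated.)

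The step I expect to be the main obstacle is this last one: one must pin down exactly which strictly $2$-balanced graphs satisfy $m_2(F)\ge\delta(F)-\tfrac12$, verify that — apart from $K_3,K_4,C_4$ — they all have the "hubs cover every edge" shape the counting needs, and run that count uniformly across the infinitely many of them; I expect the classification and bookkeeping, rather than any single clever trick, to be the bulk of the work. A point to watch throughout is that the hypothesis provides only a spanning connected core subhypergraph $\cH$, not that the whole $F$-hypergraph of $G$ is a core, so every appeal to the core property (every $\cH$-partner) and every copy of $F$ used in a degree computation must be taken inside $\cH$.
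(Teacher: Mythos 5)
Your first step is sound and coincides with the paper's ``Case 1'': the core property forces $\delta(G)\ge 2\delta(F)-1$, which together with $e_G/v_G\le m_2(F)$ and the Helpful Lemma (which gives $\delta(F)>m_2(F)$ once $v_F\ge 4$) disposes of every $F$ with $m_2(F)<\delta(F)-\tfrac12$; your treatments of $K_4$ (forcing $G=K_6$) and $C_4$ (forcing $3$-regularity) also match the paper's Claims~\ref{claim:K4-core-is-K6} and~\ref{claim:C4-core-is-K33-or-Q3}. The genuine gap is the residual class $\delta(F)-\tfrac12\le m_2(F)<\delta(F)$. This class is not ``a short list of small dense ones'': it is infinite, containing for instance $K_{2,t}$ for every $t\ge 3$ (where $m_2=2-1/t$ and $\delta=2$) as well as $K_5-e$ and others. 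For these you offer only a sketch --- ``hubs'', a cover/independent-set dichotomy, and an unspecified pair of counting inequalities --- and you explicitly concede that the classification of which $F$ land in this class, the verification of the hub structure, and the uniform count all remain to be done. As it stands, the proof is incomplete precisely on an infinite family of graphs $F$.

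The paper closes exactly this gap by a different mechanism, the Discharging Lemma: if $e_G/v_G\le k+\eps$, then $G$ contains either a vertex of degree $\le 2k$, or (when $\eps\ge 1/2$) an edge $uv$ with $\deg(u)=2k+1$ and $\deg(v)\le 2k+2$, or (when $\eps\ge 7/8$) a vertex of degree $2k+3$ with two neighbours of degree $2k+1$. Writing $m_2(F)=k+\eps$, your residual class is precisely the regime $\eps\ge 1/2$, $\delta(F)=k+1$, so the first configuration is excluded by your own degree bound. The second is killed by the Helpful Lemma with $|W|=2$ (legitimate since $v_F\ge 5$): each of the two copies of $F$ meeting only in $uv$ places more than $2m_2(F)\ge 2k+1$, hence at least $2k+2$, edges on $\{u,v\}$, so together they need $\ge 4k+3$ such edges, exceeding $\deg(u)+\deg(v)-1\le 4k+2$. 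The third is killed similarly with $|W|\le 3$, using that $\eps\ge 7/8$ forces $v_F\ge 10$. To make your argument complete you would either need to import a step of this kind, or actually carry out the classification and counting you deferred --- and the latter is where the real work of the proposition lies, not a routine check.
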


The remainder of this section is organised as follows.  In Section~\ref{sec:some-tools}, we establish two elementary graph-theoretic lemmas that we use repeatedly while proving Propositions~\ref{prop:cores-family}--\ref{prop:cores-single-graph}. These lemmas appeared in \cite[Section 7]{FriKupSamSch22}, but we include the proofs for the sake of being self-contained.  In Section~\ref{sec:single-graph-case}, we treat the single-graph case of the Deterministic Lemma, proving Proposition~\ref{prop:cores-single-graph}, showing that $K_6$ is not list-Ramsey for $K_4$, that no $3$-regular graph is list-Ramsey for $C_4$, and that every graph $G$ that is minimally list-Ramsey for $K_3$ satisfies $e_G/v_G > m_2(K_3)$. Some of the arguments for the single-graph case already appear in~\cite{FriKupSamSch22}. In Section~\ref{sec:general-families}, we treat the general case of arbitrary (finite) families $\cF$.  We first prove the easier Proposition~\ref{prop:cores-family} and argue that $K_6$ is not list-Ramsey for any family $\cF$ with $m_2(\cF) \ge e_{K_6} / v_{K_6}$.  Second, we prove the more intricate Proposition~\ref{prop:eGvG-more-than-two}.  We feel that it is worth mentioning that Section~\ref{sec:general-families} reuses several claims established in the earlier subsection and thus our proof of the single-graph case of the Deterministic Lemma may be viewed as a warm-up for the general case.

\subsection{Some tools}
\label{sec:some-tools}

Given a graph $F$ and a set $W \subseteq V(F)$, it will be convenient to denote by $\bar{e}_F(W)$ the number of edges incident with a vertex of $W$, i.e., $\bar{e}_F(W) \coloneqq e_F - e_{F - W}$, where $F-W$ denotes the subgraph of $F$ induced by the set $V(F) \setminus W$.

\begin{lemma}[Helpful Lemma, \cite{FriKupSamSch22}]
  Suppose that $F$ is strictly $2$-balanced and that $W \subseteq V(F)$ satisfies $1 \le |W| \le v_F-3$. Then
  \[
    \bar{e}_F(W) > m_2(F)\cdot |W|.
  \]
\end{lemma}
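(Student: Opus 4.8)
The plan is to compare the edge and vertex counts of $F$ directly with those of the induced subgraph $F-W$, exploiting the two things that strict $2$-balancedness buys us: it pins down $m_2(F)$ exactly, and it forces every proper subgraph of $F$ with at least three vertices to be strictly sparser. Writing $\bar e_F(W) = e_F - e_{F-W}$ as in the definition, the inequality we want is exactly a statement comparing $e_F$ with $e_{F-W}$, so it suffices to produce a good upper bound on $e_{F-W}$ and plug in the exact value of $e_F$.

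\textbf{Key steps.} First I would record the basic arithmetic of the hypotheses: since $1 \le |W| \le v_F - 3$, we have $v_F \ge 4$, so in particular $v_F \ge 3$ and the subgraph $F-W$ has $v_{F-W} = v_F - |W| \ge 3$ vertices, with $v_{F-W} - 2 \ge 1 > 0$. Strict $2$-balancedness, combined with the fact that $m_2(F)$ is a maximum over all nonempty subgraphs (including $F$ itself, which is admissible as $v_F \ge 3$), gives $m_2(F) = \frac{e_F - 1}{v_F - 2}$, i.e.\ $e_F = m_2(F)(v_F - 2) + 1$. Next I would bound $e_{F-W}$: the graph $F-W$ is a proper subgraph of $F$ (as $|W| \ge 1$) on at least three vertices, so either it has an edge and then $\frac{e_{F-W}-1}{v_{F-W}-2} \le m_2(F-W) < m_2(F)$, or it is edgeless and then $\frac{e_{F-W}-1}{v_{F-W}-2} = \frac{-1}{v_{F-W}-2} < 0 < m_2(F)$; in both cases, multiplying through by $v_{F-W}-2 > 0$ yields $e_{F-W} < m_2(F)(v_{F-W}-2) + 1$. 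Finally, subtracting these two bounds,
\[
  \bar e_F(W) = e_F - e_{F-W} > \bigl(m_2(F)(v_F-2)+1\bigr) - \bigl(m_2(F)(v_{F-W}-2)+1\bigr) = m_2(F)\cdot(v_F - v_{F-W}) = m_2(F)\cdot|W|,
\]
which is exactly the claim.

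\textbf{Main obstacle.} There is no real difficulty here; the only points requiring a little care are purely bookkeeping: verifying that the vertex-count constraints keep all denominators ($v_F - 2$ and $v_{F-W} - 2$) strictly positive so that the density inequalities may be cleared of fractions without reversing signs, and separating off the degenerate case in which $F-W$ has no edges (where the bound on $e_{F-W}$ holds trivially rather than via strict $2$-balancedness). Both are dispatched in a line, so the proof is short.
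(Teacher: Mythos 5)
Your proof is correct and takes essentially the same route as the paper's: both compare $\frac{e_{F-W}-1}{v_{F-W}-2}$ with $m_2(F) = \frac{e_F-1}{v_F-2}$ using strict $2$-balancedness of $F$ and then clear denominators. The only differences are cosmetic bookkeeping (you rearrange by bounding $e_{F}$ and $e_{F-W}$ separately and subtracting, and you explicitly treat the case where $F-W$ is edgeless, which the paper leaves implicit).
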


\begin{proof}
  Since $F-W$ is a proper subgraph of $F$ with at least three vertices and $F$ is strictly $2$-balanced,
  \[
    \frac{e_F - 1 - \bar{e}_F(W)}{v_F-2-|W|} = \frac{e_{F-W}-1}{v_{F-W}-2} < m_2(F) = \frac{e_F-1}{v_F-2},
  \]
  which means that $(e_F-1)\cdot |W|<(v_F-2) \cdot \bar{e}_F(W)$ and the result follows. 
\end{proof}

\begin{lemma}[Discharging, \cite{FriKupSamSch22}]
\label{lem:discharging}
Suppose that  $e_G/v_G \le k + \eps$, where $k \ge 1$ is an integer and $\eps \in [0,1)$. One of the following holds:
\begin{enumerate}[label=(D\alph*)]
    \item
      \label{item:discharging-1}
      $G$ has a vertex of degree at most $2k$,
    \item
      \label{item:discharging-2}
      $G$ has a vertex of degree $2k+1$ with a neighbour of degree at most $2k+2$ and $\eps \ge 1/2$, or
    \item
      \label{item:discharging-3}
      $G$ has a vertex of degree $2k+3$ with two neighbours of degree $2k+1$ and $\eps \ge 7/8$.
\end{enumerate}
\end{lemma}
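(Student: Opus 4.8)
The plan is to argue by contradiction via a discharging argument. Suppose that $e_G/v_G \le k+\eps$ but none of \ref{item:discharging-1}, \ref{item:discharging-2}, \ref{item:discharging-3} holds. Since \ref{item:discharging-1} fails, every vertex of $G$ has degree at least $2k+1$; since \ref{item:discharging-2} fails (which is vacuous unless $\eps \ge 1/2$), every vertex of degree exactly $2k+1$ has all of its neighbours of degree at least $2k+3$; and since \ref{item:discharging-3} fails (vacuous unless $\eps \ge 7/8$), every vertex of degree exactly $2k+3$ has at most one neighbour of degree $2k+1$. Give each vertex $v$ the charge $\mu(v) \coloneqq \deg_G(v) - 2(k+\eps)$, so that $\sum_{v} \mu(v) = 2e_G - 2(k+\eps)v_G \le 0$. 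I will redistribute these charges (preserving their total) so that every vertex ends with a nonnegative charge and at least one vertex ends with a strictly positive charge, forcing $\sum_v \mu(v) > 0$ --- a contradiction.

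The single discharging rule I would use is: every vertex of degree at least $2k+3$ sends $\frac{c}{2k+1}$ along each edge joining it to a neighbour of degree exactly $2k+1$, where $c \coloneqq \max\{0, 2\eps-1\} \in [0,1)$. I would then record the final charges. A vertex of degree $2k+1$ has all $2k+1$ of its neighbours of degree at least $2k+3$, so it receives $c$ in total and ends with charge $(2k+1) - 2(k+\eps) + c = 1 - 2\eps + c \ge 0$, with equality exactly when $\eps \ge 1/2$. A vertex of degree $2k+2$ sends nothing and ends with charge $2 - 2\eps > 0$. For a vertex $v$ of degree $d \ge 2k+3$ with $j$ neighbours of degree $2k+1$, the final charge is $d - 2(k+\eps) - \frac{jc}{2k+1}$; here $j \le 1$ when $d = 2k+3$ and $\eps \ge 7/8$ (by the failure of \ref{item:discharging-3}), and $j \le d$ in all other cases. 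Since $c = 2\eps-1 < 1 \le 2k+1$, the quantity $d(1 - \frac{c}{2k+1}) - 2(k+\eps)$ is increasing in $d$, so a short computation in each of the ranges $\eps \in [0,1/2)$ (where $c = 0$), $\eps \in [1/2,7/8)$, and $\eps \in [7/8,1)$ --- checking only $d = 2k+3$ and $d = 2k+4$ --- shows that every vertex of degree at least $2k+2$ ends with a strictly positive charge.

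To conclude: every vertex ends with nonnegative charge, and a vertex ends with charge $0$ only if its degree is exactly $2k+1$ and $\eps \ge 1/2$. If some vertex of $G$ has degree different from $2k+1$, its positive final charge makes $\sum_v \mu(v) > 0$ and we are done. Otherwise every vertex has degree $2k+1$; if $\eps < 1/2$ then every final charge equals $1 - 2\eps > 0$, while if $\eps \ge 1/2$ then, taking any edge $uv$ of $G$ (which exists since $\delta(G) \ge 2k+1 \ge 1$), the vertex $u$ has degree $2k+1$ and a neighbour $v$ of degree $2k+1 \le 2k+2$, so \ref{item:discharging-2} holds --- contradicting our assumption. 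The main obstacle is precisely the numeric positivity check for degree-$(2k+3)$ vertices: this is what pins down the thresholds $1/2$ and $7/8$, and the case $\eps \in [7/8,1)$ genuinely requires the extra structural information from the failure of \ref{item:discharging-3}, since at $k=1$ the inequality $3 - 2\eps - (2\eps-1)\cdot\tfrac{2k+3}{2k+1} > 0$ (the bound one gets for such a vertex without that information) is equivalent to $\eps < 7/8$.
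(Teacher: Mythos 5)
Your proof is correct and follows essentially the same route as the paper's: the identical charge $\deg(v)-2(k+\eps)$, the identical transfer of $\tfrac{2\eps-1}{2k+1}$ along each edge between a degree-$(2k+1)$ vertex and a high-degree neighbour, and the same numeric checks that pin down the thresholds $1/2$ and $7/8$ (the paper phrases it contrapositively, extracting the constraints from a vertex of non-positive final charge, but the computation is the same). The only quibble is that your justification that a degree-$(2k+1)$ vertex ``receives $c$ in total'' invokes the failure of \ref{item:discharging-2} even when $\eps<1/2$, where that gives no information---but since $c=0$ there, the conclusion stands.
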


\begin{proof}
  Since $\delta(G) \le 2e_G/v_G \le 2k+2\eps < 2k+2$, we have $\delta(G) \le 2k+1$ and, if $\eps < 1/2$, then $\delta(G) \le 2k$.  We may thus assume that $\delta(G) = 2k+1$ and $\eps \ge 1/2$, since otherwise~\ref{item:discharging-1} holds.  We may further assume that all neighbours of every vertex of degree $2k+1$ have degrees at least $2k+3$, since otherwise~\ref{item:discharging-2} holds.

  Assign to each $v \in V(G)$ a charge of $\deg(v) - 2(k+\eps)$ and note that the average charge is non-positive.  We define the following discharging rule: every vertex of degree $2k+1$ takes a charge of $\frac{2\eps-1}{2k+1}$ from each of its neighbours.  By our assumption, no vertex of degree $2k+1$ or $2k+2$ sends charge to any of its neighbours.  In particular, the final charge of a vertex of degree $2k+1$ is
  \[
    2k+1-2(k+\eps)+(2k+1) \cdot \frac{2\eps-1}{2k+1} = 0
  \]
  and the final charge of a vertex of degree $2k+2$ is $2k+2 - 2(k+\eps) > 0$.

  Since the total charge remains unchanged, the final charge of some vertex of degree at least $2k+3$ must be non-positive.  Let $v$ be one such vertex.  Suppose that $\deg(v) = 2k+t$, where $t \ge 3$, and that $v$ has $x$ neighbours with degree $2k+1$.  Since the final charge of $v$ is
  \[
    2k+t - 2(k+\eps) - x \cdot \frac{2\eps - 1}{2k+1} \le 0,
  \]
  we have
  \[
    (t-2)(2k+1) < \frac{t - 2\eps}{2\eps - 1} \cdot (2k+1) \le x \le 2k+t,
  \]
  which implies that $t < 3 + 1/k \le 4$.  Therefore, $t=3$ and $x > 2k+1 \ge 3$, which means that some vertex of degree $2k+3$ has more than three neighbours of degree $2k+1$.  Moreover, we also have $\frac{3-2\eps}{2\eps-1} \cdot (2k+1) \le 2k+3$, which implies that $\eps \ge \frac{4k+3}{4k+4} \ge \frac{7}{8}$.
\end{proof}

\subsection{The single-graph case}
\label{sec:single-graph-case}

We begin by proving Proposition~\ref{prop:cores-single-graph} in the case where $F$ is not one of the three exceptional graphs.  Note that these are the only strictly $2$-balanced graphs with at most four vertices, so disregarding them allows us to apply the Helpful Lemma to $F$ with $W$ of size one or two.

\begin{proof}[Proof of Proposition~\ref{prop:cores-single-graph} in the case $v_F \ge 5$]
  Our goal is to prove that no graph $G$ with $e_G / v_G \le m_2(F)$ is an $F$-core.  Suppose by contradiction that $G$ is such a graph and write $m_2(F) = k + \eps$, where $k \ge 1$ is an integer and $\eps \in [0,1)$.  We split the argument into three cases, depending on which item of Lemma~\ref{lem:discharging} holds.

  \medskip
  \noindent
  \textit{Case 1.}
  Suppose that $G$ has a vertex $v$ of degree at most $2k$ and let $e$ be an edge incident to $v$.  By assumption, there are two copies of $F$ in $G$ whose sole intersection is the edge $e$ and thus $\deg(v) \ge 2\delta(F) - 1$.  However, since $v_F > 3$, we may apply the Helpful Lemma to learn that $\delta(F) > m_2(F) \ge k$, which implies that $\deg(v) \ge 2k+1$, a contradiction.

  \medskip
  \noindent
  \textit{Case 2.}
  Suppose now that $G$ has an edge $uv$ with $\deg(u) = 2k+1$ and $\deg(v) \le 2k+2$, and that $\eps \ge 1/2$.  By our assumption, there are two copies of $F$ in $G$ that intersect only in $uv$.  Since $v_F > 4$, we may use the Helpful Lemma to learn that the endpoints of every edge of $F$ touch strictly more than $2(k+ \eps) \ge 2k+1$ edges.  Because the copies of $F$ intersect only in $uv$, this means that $\{u,v\}$ must touch at least $2(2k+2) - 1 = 4k + 3$ edges in $G$.  However, our assumption implies that $\{u, v\}$ touches at most $\deg(u) + \deg(v) - 1 \le 4k+2$ edges, a contradiction.

  \medskip

  \noindent
  \textit{Case 3.}
  Finally, suppose that $G$ has a vertex $v$ of degree $2k+3$ connected to two vertices $u_1, u_2$ of degrees $2k+1$, and that $\eps \ge \frac{7}{8}$. We may assume that $u_1 u_2 \notin G$, as otherwise we are in Case 2.  By our assumption, we find two copies of $F$ that intersect only in $u_1v$.  If none of them use $u_2v$, then we may argue exactly as in Case 2, as the edge $u_1v$ is the sole intersection of two copies of $F$ in the graph $G' \coloneqq G \setminus u_2v$ and $\deg_{G'}(u_1) = 2k+1$ and $\deg_{G'}(v) =2k+2$.

  We may thus assume that one of the two copies, say $F_1$, contains $u_2v$ while the other, $F_2$, does not.  Our assumption that $G$ is a core supplies another copy of $F$, say $F_3$, that intersects $F_1$ precisely in the edge $u_2v$.  Similarly as before, we will show that there are not enough edges in $G$ that touch $\{v, u_1, u_2\}$ to support these three copies of $F$.  Since~$\eps$, which is the fractional value of $m_2(F)$, is at least $7/8$, the graph $F$ must have at least ten vertices, and we are justified in applying the Helpful Lemma to sets $W$ of up to three vertices. In particular, we may conclude that $F_1$ contains at least $\left\lceil 3(k+\eps) \right\rceil = 3k + 3$ edges touching $\{v, u_1, u_2\}$, that $F_2$ contains at least $\left\lceil 2(k+\eps) \right\rceil = 2k+2$ edges touching $\{v, u_1\}$, and that $F_3$ contains at least $\left\lceil k+\eps \right\rceil = k+1$ edges touching $u_2$.  Since, among the edges mentioned, only $u_1v$ and $u_2v$ were counted twice, the set $\{v, u_1, u_2\}$ must touch at least $6k+4$ edges of $G$. However, the bounds on the degrees of $v$, $u_1$, and $u_2$ imply that they touch only $\deg(u_1) + \deg(u_2) + \deg(v) - 2 = 6k+3$ edges.
\end{proof}

In order to complete the proof of Proposition~\ref{prop:cores-single-graph}, we are now going to identify the $F$-cores in the case where $F \in \{K_4, C_4\}$.  Further, to conclude the proof of the Deterministic Lemma in the case $\cF = \{F\}$, we will have to show how to colour each $K_4$-core and $C_4$-core and prove that every graph $G$ that is minimally list-Ramsey with respect to $K_3$ satisfies $e_G/v_G > 2 = m_2(K_3)$.

\begin{claim}
  \label{claim:K4-core-is-K6}
  The only connected $K_4$-core $G$ with $e_G/v_G \le m_2(K_4)$ is $G = K_6$.
\end{claim}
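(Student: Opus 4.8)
The plan is to leverage the $K_4$-core property to force $\delta(G) \ge 5$ and then combine this with the density hypothesis to conclude that $G$ is $5$-regular and in fact equal to $K_6$; no appeal to Lemma~\ref{lem:discharging} is needed. Recall that $m_2(K_4) = (6-1)/(4-2) = 5/2$ and that $e_{K_6}/v_{K_6} = 15/6 = 5/2$, so $K_6$ meets the hypothesis; moreover it is a connected $K_4$-core, since for any edge $xy$ of $K_6$ with remaining vertices $p,q,r,s$ the copies of $K_4$ on $\{x,y,p,q\}$ and on $\{x,y,r,s\}$ intersect precisely in the edge $xy$, so the (connected, spanning) $K_4$-hypergraph of $K_6$ is a core. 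It remains to prove that $K_6$ is the only example.

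Fix a spanning, connected, core subhypergraph $H$ of the $K_4$-hypergraph of $G$; we may assume $G$ contains a copy of $K_4$, as otherwise there is nothing to prove. The first step is the observation that every edge of $G$ lies in two copies of $K_4$ that meet in exactly that edge. Indeed, any edge $f$ of $G$ is a vertex of $H$, hence lies in some hyperedge $e$ of $H$ --- a copy of $K_4$ --- and the core property yields a hyperedge $e'$ with $E(e) \cap E(e') = \{f\}$. Two copies of $K_4$ sharing exactly one edge cannot share any further vertex, since a common vertex outside $f$ would produce a second common edge; therefore, at each endpoint $w$ of $f$, the two copies use $3 + 3 - 1 = 5$ distinct edges incident to $w$, whence $\deg_G(w) \ge 5$. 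Thus $\delta(G) \ge 5$, while the hypothesis gives $2 e_G = \sum_w \deg_G(w) \le 5 v_G$. Hence $G$ is $5$-regular with $e_G/v_G = 5/2$.

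Next I would exhibit a copy of $K_6$. Take any edge $uv$; the previous step produces copies $K = \{u,v,a,b\}$ and $K' = \{u,v,c,d\}$ of $K_4$ with $E(K) \cap E(K') = \{uv\}$, which forces $a,b,c,d$ to be four distinct vertices all adjacent to both $u$ and $v$ (if, say, $a=c$, then $ua = uc$ would be a second common edge of $K$ and $K'$); by $5$-regularity, $N_G(u) = \{v,a,b,c,d\}$ and $N_G(v) = \{u,a,b,c,d\}$. Applying the same mechanism to the edge $ua$ (which lies in $K \subseteq G$) gives two copies of $K_4$ meeting exactly in $\{ua\}$; each contains $u$ and $a$, so its remaining two vertices lie in $N_G(u)\setminus\{a\} = \{v,b,c,d\}$, and since the two copies share no vertex besides $u$ and $a$, their pairs of extra vertices are disjoint and hence partition $\{v,b,c,d\}$. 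As both are copies of $K_4$, $a$ is adjacent to every vertex of $\{v,b,c,d\}$, so $5$-regularity forces $N_G(a) = \{u,v,b,c,d\}$; in particular $ac, ad \in E(G)$. Running the same argument through the edge $ub$ yields $N_G(b) = \{u,v,a,c,d\}$, hence $bc, bd \in E(G)$.

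At this point all fifteen edges on $S = \{u,v,a,b,c,d\}$ are present --- $uv$, the eight edges joining $\{u,v\}$ to $\{a,b,c,d\}$, the edges $ab$ and $cd$ coming from $K$ and $K'$, and $ac, ad, bc, bd$ from the previous step --- so $G[S] = K_6$. Each vertex of $S$ now has all five of its neighbours inside $S$, so no edge of $G$ leaves $S$, and connectedness of $G$ forces $V(G) = S$, i.e., $G = K_6$. The step that demands the most care is the propagation through $ua$ and $ub$: one must check that two copies of $K_4$ meeting in a single edge really share only the two endpoints of that edge, so that the neighbours of $a$ (resp.\ $b$) picked up from the two copies are counted without overlap; everything else is routine degree bookkeeping.
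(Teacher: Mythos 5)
Your proof is correct and follows essentially the same route as the paper: first use the core property to show every edge is the sole intersection of two copies of $K_4$, deduce $\delta(G)\ge 2\delta(K_4)-1=5$ and hence $5$-regularity from the density bound, and then propagate the "two copies meeting in one edge" condition through the edges $ua$ and $ub$ to force all fifteen edges on the six vertices, with connectedness finishing the job. The paper's argument is the same degree-counting and propagation, just phrased with different vertex labels.
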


\begin{proof}
  Suppose that $G$ is a connected $K_4$-core with $e_G/v_G \le m_2(K_4) = 5/2$.  Since $G$ must satisfy $\delta(G) \ge 2\delta(K_4)-1 = 5 \ge 2e_G/v_G$, see Case 1 in the proof of Proposition~\ref{prop:cores-single-graph}, $G$ must be $5$-regular.  Now, let $a_1 a_2 \in G$ be an edge, and let $b_1 b_2 a_1 a_2$ and $c_1 c_2 a_1 a_2$ be two copies of $K_4$ whose edge sets intersect solely in $a_1 a_2$; in particular, the vertices $b_1, b_2, c_1, c_2$ are all distinct.
\begin{figure}[htb]
\centering

\begin{tikzpicture}[scale=0.8]
\foreach \a in {0,1,2}{
	\draw[fill] (-2 + 2*\a , 1) circle (0.13);
	\draw[fill] (-2 + 2*\a , -1) circle (0.13);
}

\draw[line width=1.5pt] (-2,1) -- (0,1) -- (2,1) -- (2,-1)
						-- (0,-1) -- (-2,-1) -- (-2,1)
						-- (0,-1) -- (2,1);
\draw[line width=1.5pt] (-2,-1) -- (0,1) -- (2,-1);
\draw[line width=1.5pt] (0,1) -- (0,-1);

\draw[line width=1.5pt, dashed] (2,1) to[out=160, in=20] (-2,1);
\draw[line width=1.5pt, dashed] (2,1) 
						to[out=90, in=60] (-2.5,1.5)
						to[out=-120,in=180] (-2,-1);

\draw (0,0) node at (0,1.7) {$a_1$}
			node at (0,-1.5) {$a_2$}
			node at (2.2, 1.5) {$b_1$}
			node at (2, -1.5) {$b_2$}
			node at (-2, 1.5) {$c_1$}
			node at (-2,-1.5) {$c_2$};
\end{tikzpicture}
\end{figure}  
  
Now, the edge $a_1 b_1$ should also participate in two copies of $K_4$ where it is the sole intersection, $a_1 b_1 v v'$ and $a_1 b_1 u u'$. However the vertex $a_1$ is of degree $5$ so $\{v,v',u,u'\} = \{a_2, b_2, c_1, c_2 \}$ meaning that $b_1$ is adjacent to $c_1$ and $c_2$. Repeating the same argument for $a_1 b_2$, we get that $b_2$ is adjacent to $c_1$ and $c_2$ meaning that $\{a_1, a_2, b_1, b_2, c_1, c_2\}$ induces a $K_6$. Since $G$ is connected and $5$-regular, we have $G = K_6$.
\end{proof}

\begin{claim}
  \label{claim:C4-core-is-K33-or-Q3}
  If $G$ is a connected $C_4$-core with $e_G/v_G \le m_2(C_4)$, then $G$ is $3$-regular.
\end{claim}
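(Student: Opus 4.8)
The plan is to combine the degree lower bound forced by the core property with the density upper bound; together they pin down every degree exactly, with no room to spare. First I would record that $m_2(C_4) = \frac{e_{C_4}-1}{v_{C_4}-2} = \frac{3}{2}$, so the hypothesis $e_G/v_G \le m_2(C_4)$ reads $2e_G \le 3v_G$.

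Next I would show that $\delta(G) \ge 3$, by exactly the mechanism used in Case~1 of the proof of Proposition~\ref{prop:cores-single-graph}. Since the $C_4$-hypergraph of $G$ contains a spanning, connected subhypergraph that is a core, every edge $e = uv$ of $G$ lies in some copy $Q$ of $C_4$, and the core property supplies a copy $Q'$ of $C_4$ with $Q \cap Q' = \{e\}$ as edge sets. In a $4$-cycle every vertex has degree $2$, so at the vertex $v$ each of $Q$ and $Q'$ uses, besides $e$, exactly one further edge of $G$; these two further edges must be distinct, since otherwise $Q$ and $Q'$ would share two edges at $v$. Hence $v$ is incident to at least three edges. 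As $G$ is connected with $v_G \ge 4$ (it contains a copy of $C_4$), it has no isolated vertices, so this applies to every vertex and $\delta(G) \ge 3$.

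Finally I would just count degrees: $3 v_G \le \sum_{v \in V(G)} \deg(v) = 2 e_G \le 3 v_G$, so all inequalities are equalities, $\deg(v) = 3$ for every $v \in V(G)$, and $G$ is $3$-regular.

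I do not expect a real obstacle here. The only point requiring a little care is the book-keeping in the core step, namely verifying that the second copy $Q'$ genuinely forces a \emph{third} edge incident to $v$ rather than merely another edge of $G$ somewhere; but this is precisely the computation already carried out for larger $F$ in Case~1 of Proposition~\ref{prop:cores-single-graph}, and it goes through unchanged because $\delta(C_4) = 2$.
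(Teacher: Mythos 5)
Your proposal is correct and matches the paper's own (very short) argument: the core property forces $\delta(G) \ge 2\delta(C_4)-1 = 3$, exactly as in Case~1 of the proof of Proposition~\ref{prop:cores-single-graph}, and combining this with $2e_G/v_G \le 3$ pins every degree at $3$. No gaps.
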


\begin{proof}
  Suppose that $G$ is a connected $C_4$-core with $e_G/v_G \le m_2(C_4) = 3/2$.  Since $\delta(G) \ge 2\delta(C_4)-1 = 3 \ge 2e_G/v_G$, the graph $G$ must be $3$-regular.
\end{proof}

The proof of Proposition~\ref{prop:cores-single-graph} is now
complete.  Since it is easily checked that $K_6$ is not Ramsey for
$K_4$ and $K_5$ is not Ramsey for $K_3$, the following two claims complete the proof of the $0$-statement in Theorem~\ref{thm:rodl-rucinski}.

\begin{claim}
  \label{claim:K5}
  If $G \neq K_5$ is minimally (list-)Ramsey graph for $K_3$, then $e_G/v_G > m_2(K_3)$.
\end{claim}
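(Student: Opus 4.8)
The plan is to argue by contradiction. Suppose $G \neq K_5$ is minimally list-Ramsey for $K_3$ but $e_G/v_G \le m_2(K_3) = 2$. Recall that $G$ is then a connected $K_3$-core, so the $K_3$-hypergraph of $G$ contains a spanning connected core subhypergraph; in particular every edge $uv$ of $G$ lies in at least two triangles, so $|N(u)\cap N(v)|\ge 2$, and consequently $\delta(G)\ge 3$. I will show that these constraints, together with minimality, force $G$ to equal the octahedron $K_{2,2,2}$, which is not list-Ramsey for $K_3$ --- a contradiction.

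\emph{Step 1: $G$ is $4$-regular.} First I would rule out vertices of degree exactly $3$. If $\deg(v)=3$ with $N(v)=\{a,b,c\}$, then applying the observation above to the edges $va, vb, vc$ forces $a,b,c$ to be pairwise adjacent, so $\{v,a,b,c\}$ spans a $K_4$ and $vab$, $vac$, $vbc$ are the \emph{only} triangles of $G$ through $v$. Let $L$ witness that $G$ is list-Ramsey. By minimality, the proper subgraph $G-v$ is not list-Ramsey, so there is an $L$-colouring $c$ of $E(G)\setminus\{va,vb,vc\}$ with no monochromatic triangle in $G-v$. Each of the $|L(va)|\cdot|L(vb)|\cdot|L(vc)|=8$ extensions of $c$ to $E(G)$ must create a monochromatic triangle, necessarily one of $vab$, $vac$, $vbc$; but for a fixed such triangle, becoming monochromatic forces the colours of two of the three edges at $v$, so at most $|L(vc)|=2$ extensions realise it. Hence at most $3\cdot 2 = 6 < 8$ of the extensions are bad, a contradiction. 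Therefore $\delta(G)\ge 4$, and since $\sum_v\deg(v)=2e_G\le 4v_G$, the graph $G$ must be $4$-regular.

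\emph{Step 2: classify connected $4$-regular $K_3$-cores.} Fix $v$ with $N(v)=\{a,b,c,d\}$. Each edge at $v$ lies in at least two triangles, all of them inside $N[v]$, so every vertex of the $4$-vertex graph $G[N(v)]$ has degree at least $2$ there; hence $G[N(v)]$ is a $4$-cycle, the diamond $K_4-e$, or $K_4$. If $G[N(v)]=K_4$ then $N[v]$ spans a $K_5$ all of whose vertices already have degree $4$, so $G=K_5$ by connectivity, contrary to assumption. If $G[N(v)]$ is a diamond, then its two degree-$3$ vertices, together with $v$, have all their neighbours inside $N[v]$, so a degree-$2$ vertex $x$ of the diamond has its unique remaining neighbour $w$ outside $N[v]$; a short check then shows that $x$ and $w$ have no common neighbour, so the edge $xw$ lies in no triangle --- impossible. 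So $G[N(v)]$ is a $4$-cycle, say with edges $ab$, $bc$, $cd$, $da$. Now each of $a,b,c,d$ has exactly one neighbour outside $N[v]$, and requiring each of these four external edges to lie in two triangles (whose third vertices can only be further members of $N(v)$) forces the four external neighbours to coincide in a single vertex $w$ with $N(w)=N(v)$. Then $\{v,w\}\cup N(v)$ induces an octahedron $K_{2,2,2}$, which is a connected component of $G$, so $G=K_{2,2,2}$.

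\emph{Step 3: $K_{2,2,2}$ is not list-Ramsey for $K_3$.} It suffices to $2$-list-colour the $K_3$-hypergraph of $K_{2,2,2}$, whose $12$ vertices are the edges of $K_{2,2,2}$, each lying in exactly two of the eight triangular hyperedges. One checks that this hypergraph is isomorphic to the one whose vertices are the edges of the cube $Q_3$ and whose hyperedges are the eight vertex-stars of $Q_3$; thus it is enough to colour the edges of $Q_3$ from arbitrary $2$-element lists so that no vertex of $Q_3$ is incident to three edges of the same colour. To do this, take a proper $3$-edge-colouring $M_1\cup M_2\cup M_3$ of the $3$-regular bipartite graph $Q_3$; then $M_1\cup M_2$ is a disjoint union of even cycles and hence admits a proper edge-colouring from any $2$-element lists (even cycles are $2$-choosable), so the $M_1$-edge and $M_2$-edge at each vertex receive distinct colours. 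Colouring the edges of $M_3$ from the remaining lists arbitrarily gives the required colouring. This contradicts $G$ being minimally list-Ramsey, proving the claim; the non-list version follows by taking all lists equal to $\{1,2\}$ throughout. I expect the main obstacle to be Step 2: checking the diamond and $4$-cycle subcases --- in particular, that the $4$-cycle case really forces the octahedron and no other graph --- is an elementary but somewhat fiddly local analysis of neighbourhoods, whereas Step 1 is a clean counting argument and Step 3 reduces to the $2$-choosability of even cycles.
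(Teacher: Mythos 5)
Your proof is correct, but it takes a genuinely different route from the paper's. The paper's central device is an orientation argument: if $G[N(v)]$ spans at most four edges, it admits an orientation with maximum out-degree one, and then any $K_3$-free colouring of $G-v$ extends to $G$ by giving each edge $uv$ a colour different from that of the out-edge at $u$. This forces $e(N(v))\ge 5$ for every $v$ in one stroke --- killing both degree-$3$ vertices and $4$-regular graphs with $C_4$-neighbourhoods, so the octahedron never has to be identified, let alone coloured --- and the proof concludes by showing that a connected $4$-regular graph with $e(N(v))=5$ everywhere cannot exist (a neighbour's neighbourhood would span only $3$ edges), leaving only $e(N(v))=6$, i.e.\ $K_5$. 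You replace this with two separate devices: a counting argument ($6<8$ bad extensions) for degree-$3$ vertices, and an explicit classification of the connected $4$-regular $K_3$-cores as $K_5$ or $K_{2,2,2}$, followed by a concrete list-colouring of the octahedron via its planar duality with $Q_3$, K\H{o}nig's edge-colouring theorem, and the $2$-choosability of even cycles. Both arguments are sound; the paper's is more economical, while yours is more constructive --- it exhibits exactly which graphs survive the local constraints and colours the one nontrivial survivor by hand. Your local analyses (the diamond neighbourhood forcing an edge in no triangle, the $C_4$ neighbourhood forcing the four external neighbours to coincide) both check out.
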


\begin{proof}
  Suppose that a graph $G$ is minimally (list-)Ramsey for $K_3$ and satisfies $e_G/v_G \le m_2(K_3) = 2$.  If, for some $v \in V(G)$, there was an orientation of the edges of $G[N(v)]$ with maximum out-degree at most one, we could then extend every $K_3$-free colouring of $G - v$ to $G$ as follows: For every $u \in N(v)$, the edge $uv$ gets a colour that is different from the colour of the out-edge from $u$.  (Since each triangle involving $v$ contains an edge of $G[N(v)]$, this colouring is $K_3$-free.)  As every graph with at most four edges has such an orientation, we may assume that $e(N(v)) \ge 5$ for every $v \in V(G)$; in particular, $\delta(G) \ge 4 \ge 2e_G/v_G$, so $G$ must be $4$-regular.

\begin{figure}[htb]
\centering

\begin{tikzpicture}[scale=0.8]
\foreach \a in {0, 1, ..., 3}{
	\draw[fill] (-20 + 90* \a:1) circle (0.13);
}

\draw[fill] (-3, 0) circle (0.13);
\draw[fill] (3, 0) circle (0.13);

\draw[line width=1.5pt] (-3,0) to[out=60, in=120] (70:1);
\draw[line width=1.5pt] (-3,0) to[out=15, in=180] (90+70:1);
\draw[line width=1.5pt] (-3,0) to[out=-30, in=180] (2*90+70:1);
\draw[line width=1.5pt] (-3,0) 
						to[out=-90, in=180] (-1,-1.7)
						to[out=0, in=-90] (3*90+70:1);

\draw[line width=1.5pt] (180-20:1) -- (270-20:1) -- (-20:1) -- (90-20:1) -- (180-20:1) -- (-20:1);
\draw[line width=1.5pt] (90-20:1) -- (3,0);

\draw (0,0) node at (70:1.5) {$u_1$}
			node at (90+60:1.5) {$u_2$}
			node at (270-60:1.2) {$u_3$}
			node at (-20:1.5) {$u_4$}
			node at (-3.5,0) {$v$}
			node at (3.5,0) {$w$};
\end{tikzpicture}
\end{figure}  

  If $e(N(v)) > 5$ for some $v \in V(G)$, then $G = K_5$, since $G$ is $4$-regular and connected.  We may thus further assume that $e(N(v)) = 5$ for every $v$.  Pick some $v$ and denote $N(v) = \{u_1, u_2, u_3, u_4\}$ so that $u_1u_3 \notin G[N(v)]$. Since $G$ is $4$-regular, there must be a $w \in V(G) \setminus (\{v\} \cup N(v))$ such that $N(u_1) = \{v, w, u_2, u_4\}$. Moreover, $w$ is not adjacent to either of $v$,  $u_2$, and $u_4$, as they all have $4$ neighbours in $\{v\} \cup N(v)$, and thus $e(N(u_1)) \le 3$, a contradiction.
\end{proof}

\begin{claim}
  \label{claim:3-regular-2-colouring}
  Every graph with maximum degree three can be $2$-list-coloured in such a~way that
  every colour class is a forest.
\end{claim}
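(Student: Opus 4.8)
The plan is to argue by contradiction: let $G$, equipped with a list assignment $L$ giving each edge a $2$-element list, be a counterexample with the fewest edges. I would first use a short chain of routine reductions to determine the structure of $G$ completely, and then dispose of the single surviving configuration by a self-contained argument.

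For the reductions: if $G$ is disconnected, colour each component separately, so $G$ is connected; it has an edge (otherwise it is trivially colourable), hence no isolated vertex. If $G$ has a vertex $v$ of degree $1$, its edge is a bridge, so colouring $G-v$ by minimality and colouring that edge arbitrarily from its list gives a valid colouring of $G$, since no monochromatic cycle can pass through a vertex of degree at most $1$. If $v$ has degree $2$ with incident edges $e_1,e_2$, then because $|L(e_1)|=|L(e_2)|=2$ we may choose $c_1\in L(e_1)$ and $c_2\in L(e_2)$ with $c_1\neq c_2$; colouring $G-v$ by minimality and extending by $c_1,c_2$ works, as every cycle through $v$ contains both $e_1$ and $e_2$ and so is not monochromatic. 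Thus $G$ is $3$-regular. Next, if some vertex $v$ has incident edges $e_1,e_2,e_3$ whose lists are not all equal, then $|L(e_1)\cup L(e_2)\cup L(e_3)|\ge 3$, so Hall's condition holds for these three lists and they admit a system of distinct representatives $c_1,c_2,c_3$; colouring $G-v$ by minimality (note $\Delta(G-v)\le 3$) and using $c_1,c_2,c_3$ on $e_1,e_2,e_3$ again yields a valid colouring, since any cycle through $v$ meets exactly two of these edges and so receives two colours. Hence every vertex of $G$ has all three incident edges sharing a common list, and since $G$ is connected, \emph{all} edges of $G$ carry one common list, say $\{1,2\}$.

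It now suffices to show that a $3$-regular graph — indeed, any graph with maximum degree at most $3$ — can be edge-$2$-coloured so that both colour classes are forests, i.e.\ that $E(G)$ is a union of two forests; this contradicts the minimality of $G$ and completes the proof. I would prove this directly, without appealing to the Nash-Williams arboricity theorem. Fix a depth-first-search spanning tree $T$ of $G$, so that every non-tree edge joins a vertex to a strict ancestor of it in $T$. The tree edges form a forest, and I claim that the set $B$ of non-tree edges is a forest as well. Suppose not, and let $Z\subseteq B$ be a cycle; let $v\in V(Z)$ have maximum depth in $T$. Each of the two $Z$-neighbours of $v$ is comparable to $v$ in $T$ and, by maximality of depth, is a strict ancestor of $v$; as the edge joining it to $v$ lies in $B$, it is not the parent of $v$, hence lies at tree-distance at least two from $v$. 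Choose such a neighbour $u$ that is not the root of $T$ (at least one of the two qualifies). Then $u$ is incident to four distinct edges — its parent edge in $T$, the tree edge from $u$ towards its child on the tree-path from $u$ down to $v$, the non-tree edge $uv$, and the second edge of $Z$ at $u$ (which joins $u$ to a vertex other than $v$, since $Z$ has length at least three) — contradicting $\Delta(G)\le 3$.

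The substance of the argument is this last step: showing that the non-tree edges of a DFS tree form a forest whenever $\Delta\le 3$; everything before it is bookkeeping. The only points needing care are choosing the cycle-neighbour $u$ distinct from the root, and checking that the four listed edges at $u$ are genuinely distinct, which relies on $uv$ being a non-tree edge (so $v$ is not a child of $u$) and on the cycle $Z$ having length at least three.
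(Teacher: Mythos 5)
Your proof is correct, and its reductions (minimal counterexample; handle vertices of degree at most two; use a system of distinct representatives when the three lists at a vertex are not all equal; conclude that $G$ is connected, $3$-regular, with one common list) coincide with the paper's, which compresses the SDR step into a single sentence but rests on the same observation that every cycle through $v$ uses exactly two edges at $v$. Where you genuinely diverge is the last step. The paper stays inside the minimal-counterexample induction: it takes a cycle $v_1\dotsc v_\ell$ of the $3$-regular graph, colours $G-\{v_1,\dotsc,v_\ell\}$ by minimality, and writes down an explicit two-colouring of the cycle edges and the pendant edges $e_1,\dotsc,e_\ell$ that creates no monochromatic cycle. You instead prove outright that every graph with maximum degree three decomposes into two forests, by showing that the back edges of a DFS tree are acyclic: the deepest vertex of a hypothetical back-edge cycle has two neighbours on it that are strict, non-parent ancestors, and a non-root one among them would be incident to four distinct edges. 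This argument is sound (the four edges are pairwise distinct for exactly the reasons you give), and it buys something the paper's does not: a direct, non-inductive proof that graphs of maximum degree three have arboricity at most two, obtained without Nash--Williams --- a theorem the paper explicitly goes out of its way to avoid. The paper's version is shorter on the page but invokes the minimality hypothesis once more and, as written, quietly needs the chosen cycle to be chordless (or shortest) so that the pendant edges $e_i$ are well defined; your global decomposition sidesteps that entirely.
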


\begin{proof}
  Suppose that this were not true and let $G$ be a smallest graph with maximum degree three that does not admit such a cycle-free colouring for some choice of lists of size two assigned to its edges.  If some vertex $v$ had degree at most two or its three incident edges do not all have the same lists, then we could extend any colouring of $G-v$ by assigning different colours to the edges incident with $v$; this way we would not create any monochromatic cycles.  Therefore, $G$ must be $3$-regular and, as $G$ is clearly connected, all lists must be identical, say $\{\text{red}, \text{blue}\}$.  Let $v_1 v_2 \dotsc v_\ell v_1$ be an arbitrary cycle in $G$ and denote by $e_1, \dotsc, e_\ell$ the edges incident with $v_1, \dotsc, v_\ell$, respectively, that are not on this cycle.  We can extend any cycle-free colouring of $G-\{v_1, \dotsc, v_\ell\}$ to a cycle-free colouring of $G$ by colouring all the edges of the path $v_1 \dotsc v_\ell$ and the edge $e_1$ red and the edges $v_1 v_\ell$ and $e_2, \dotsc, e_\ell$ blue.
\end{proof}

\label{page:end-of-proof-of-RR}
We have now completed the proof of the $0$-statement in Theorem~\ref{thm:rodl-rucinski}.  The following two simple claims are what remains to be proved to obtain the complete statement of the Deterministic Lemma and, as a result, the proof of the $0$-statement in the list-colouring version of Theorem~\ref{thm:rodl-rucinski}.

\begin{claim}
  $K_6$ is not list-Ramsey with respect to $K_4$.
\end{claim}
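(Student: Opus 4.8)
The plan is to prove the statement directly from the definition: for \emph{every} assignment of $2$-element lists to the edges of $K_6$, I will exhibit a colouring selecting from these lists with no monochromatic $K_4$. The key reduction is the following. Call a path $uvw$ of length two in $K_6$ a \emph{cherry}, and say that a copy of $K_4$ \emph{contains} the cherry $uvw$ if $\{u,v,w\}$ lies in its vertex set (so both edges $uv,vw$ are edges of that $K_4$). Suppose we can find six cherries $C_1,\dots,C_6$ such that (a) every one of the $\binom{6}{4}=15$ copies of $K_4$ in $K_6$ contains at least one $C_i$, and (b) the auxiliary graph $\Gamma$ on vertex set $E(K_6)$ — one vertex per edge of $K_6$, with an edge joining the two edges of each $C_i$ — is a forest. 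Then, given any assignment of lists of size $2$ to $E(K_6)=V(\Gamma)$, we may properly list-colour $\Gamma$: order its vertices so that each has at most one earlier neighbour (a forest admits such an ordering), and colour greedily, at each step avoiding the colour of the unique earlier neighbour if there is one. This induces a colouring of $E(K_6)$ in which the two edges of every $C_i$ receive different colours; since each $K_4$ contains some $C_i$, no $K_4$ is monochromatic. Hence no list assignment witnesses list-Ramseyness, i.e. $K_6\notin\Ram_\ell(K_4,2)$.

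To construct such cherries, index $V(K_6)$ by $\ZZ_6$ and use the bijection between copies of $K_4$ and pairs of vertices given by complementation ($\binom{6}{4}=\binom{6}{2}$). A cherry on a vertex set $T$ is contained in the $K_4$ on $V\setminus\{x,y\}$ exactly when $\{x,y\}\cap T=\emptyset$, i.e. when $\{x,y\}\subseteq V\setminus T$. So condition (a) holds precisely when the six $3$-sets $V\setminus T(C_i)$ cover all $15$ pairs of vertices — that is, when the complements of the cherry supports form a $(6,3,2)$-covering. Such a covering exists: for instance the six translates $\{0,1,3\},\{1,2,4\},\{2,3,5\},\{3,4,0\},\{4,5,1\},\{5,0,2\}$ of $\{0,1,3\}$ in $\ZZ_6$ cover every pair. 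Taking the $T(C_i)$ to be the complements of these six triples settles (a). For (b), note that in each of the six triples $T(C_i)$ we are still free to pick which vertex is the centre of the cherry; since $\Gamma$ has only $6$ edges on $15$ vertices, there is plenty of room to make these choices so that no two cherries close up a cycle, and one verifies that a concrete choice (e.g. centring the cherries at the vertices $2,3,1,2,2,3$ for the six supports above) makes $\Gamma$ a disjoint union of paths and isolated vertices.

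The only real content here is the covering-design observation together with the complement bijection, which makes (a) transparent; (b) is essentially automatic because $\Gamma$ is so sparse. I do not anticipate a genuine obstacle — the argument is finite and explicit — the one thing requiring care is coordinating the centre choices in the last step so that the covering property \emph{and} acyclicity of $\Gamma$ hold at once, which is immediate once one writes the six cherries down and inspects their edge sets.
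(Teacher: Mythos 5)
Your proof is correct, but it takes a genuinely different route from the paper. The paper's argument is a one-line first-moment computation: colour each edge uniformly at random from its list; a fixed $K_4$ is monochromatic with probability at most $2\cdot 2^{-6}$, so the expected number of monochromatic copies is at most $15\cdot 2^{-5}<1$, and some colouring from the lists works. You instead build an explicit deterministic gadget: six cherries whose supports' complements form a $(6,3,2)$-covering (so every $K_4$ contains a cherry) and whose conflict graph $\Gamma$ on $E(K_6)$ is a forest, then invoke the $2$-choosability of forests via a $1$-degenerate ordering. I checked your concrete data: the six translates of $\{0,1,3\}$ in $\ZZ_6$ do cover all $15$ pairs (differences $1$ and $2$ once each, difference $3$ twice), and with centres $2,3,1,2,2,3$ the cherry edge-pairs are $\{24,25\},\{03,35\},\{01,14\},\{12,25\},\{02,23\},\{13,34\}$, which form a path $24$--$25$--$12$ plus four disjoint edges --- indeed a forest. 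So the argument is complete. What each approach buys: the paper's probabilistic argument is essentially free but non-constructive and specific to the numerology $15\cdot 2^{-5}<1$; yours is longer and needs the finite verifications, but it yields an explicit polynomial-time colouring procedure and isolates a reusable sufficient condition (a cover of the forbidden copies by cherries with acyclic conflict graph) that could in principle apply where the union bound fails.
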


\begin{proof}
Given an assignment of lists of size two, pick a random colouring uniformly from the lists. There are $\binom{6}{4} = 15$ copies of $K_4$ in $K_6$, so the expected number of monochromatic $K_4$s is at most $15 \cdot 2^{1-6} < 1$, meaning that there is a proper list-colouring.
\end{proof}

\begin{claim}
  \label{claim:K5-list-colourable}
  $K_5$ is not list-Ramsey with respect to $K_3$.
\end{claim}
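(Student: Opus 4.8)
The plan is to show that for every assignment of size-$2$ colour lists $L(\cdot)$ to the edges of $K_5$ there is a colouring $\chi$ from the lists with no monochromatic triangle. A uniformly random colouring does not suffice: a triangle with edge-lists $L_1,L_2,L_3$ is monochromatic with probability $|L_1\cap L_2\cap L_3|/8 \le 1/4$, so the expected number of monochromatic triangles among the ten triangles of $K_5$ can be as large as $10/4=5/2$. Hence we must use the structure of $K_5$. My plan is to delete a vertex $v$, write $\{a,b,c,d\}$ for the remaining four vertices, first colour $K_5-v=K_4$ so that no triangle is monochromatic, and then extend the colouring across the four ``star'' edges $va,vb,vc,vd$.

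The first step is an easy sub-claim: \emph{every $2$-list assignment to $K_4$ admits a colouring with no monochromatic triangle}. To see this, delete a further vertex, leaving a $K_3$; a triangle is trivially list-colourable, since one can always give two of its edges distinct colours (each list has size $2$). It then remains to colour the three star edges, which amounts to a constraint problem with three variables of domain size $2$ (so $2^3=8$ candidate assignments) and at most three forbidden patterns, each of the form ``two prescribed variables take a common prescribed value'', hence ruling out at most $2$ of the $8$ assignments; at least $8-3\cdot 2=2$ assignments survive. Applying this to $K_5-v$, we are left with the task of choosing $\chi(va),\chi(vb),\chi(vc),\chi(vd)$ from their lists so that $\chi(vi),\chi(vj),\chi(ij)$ are not all equal for each of the six edges $ij$ of the already-coloured $K_4$.

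The main obstacle is exactly this last step. It is a constraint problem with four variables of domain size $2$ (so $2^4=16$ assignments) and up to six forbidden patterns, and the crude count $6\cdot 2^{4-2}=24>16$ means a naive union bound fails. The point is that the constraint attached to an edge $ij$ is \emph{vacuous} unless $\chi(ij)\in L(vi)\cap L(vj)$, so it suffices to choose the vertex $v$, and (among the $\ge 2$ valid colourings produced above) a colouring of $K_5-v$, that leaves at most three of the six constraints active; then $3\cdot 4 = 12 < 16$ concludes. I expect this to need a short case analysis organised by the intersection pattern of the lists on the edges incident with $v$ and on the edges of $K_5-v$: whenever two edges $vi,vj$ have disjoint lists, or an edge $ij$ of $K_5-v$ has a list missing some colour of $L(vi)\cap L(vj)$, the corresponding constraint can be made to disappear for free, and one argues that enough such slack is always available. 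In the extreme case where all ten lists coincide one does not use this machinery at all: $K_5$ has a triangle-free $2$-edge-colouring (colour the two edge-disjoint $5$-cycles into which $K_5$ decomposes with the two colours; equivalently, $R(3,3)=6$), which is a valid colouring from the common list.
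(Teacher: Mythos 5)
Your overall strategy (delete a vertex, colour the remaining $K_4$ triangle-free, then extend across the four star edges) is reasonable, and your sub-claim about $K_4$ is fine. But the key quantitative step — that one can always choose $v$ and a triangle-free list-colouring of $K_5-v$ leaving at most three of the six extension constraints active — is not proved, and in fact it is false. Take $L(vu)=\{1,3\}$ and all nine other lists equal to $\{1,2\}$. Deleting $v$, the three edges $ab,ac,bc$ of $K_4=K_5-v$ satisfy $L(va)\cap L(vb)=\{1,2\}$, etc., so their constraints are active for \emph{every} admissible colouring of $K_4$; and one cannot colour all of $au,bu,cu$ with colour $2$ (that would force $ab,ac,bc$ all to be $1$, a monochromatic triangle), so at least one further constraint is active, giving at least four. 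The other choices of deleted vertex do no better, by symmetry or direct inspection. So your union bound $3\cdot 4<16$ never gets off the ground in this case, and $4\cdot 4=16$ is not good enough. (The extension still exists here — setting $\chi(vu)=3$ vacuously satisfies every constraint involving $u$, and the remaining three constraints on $\{1,2\}$-valued variables kill at most six of eight assignments — but that requires looking at how the constraints overlap, not counting them.) Since you explicitly defer the required case analysis (``I expect this to need a short case analysis\dots''), the proof is incomplete at its central point.

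For comparison, the paper argues in an entirely different way: for each colour $c$ it considers the subgraph of $K_5$ formed by the edges whose list contains $c$, and does a case analysis on its structure (a colour class containing a $5$-cycle; a colour class with an edge in no triangle, which is handled by colouring $K_5$ minus that edge using the already-established list-colourability of proper subgraphs of $K_5$; and the few remaining configurations $K_3$, $K_4$, $K_4^-$, $K_5\setminus K_3$, bowtie, the last of which is excluded by a parity count). If you want to salvage your vertex-deletion approach, you will need to replace the crude union bound by an argument exploiting the structure of the active constraints — for instance, treating separately the star edges whose lists differ from the majority list, as in the $\chi(vu)=3$ observation above — or simply carry out the finite case analysis you postponed.
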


\begin{proof}
If some colour, say red, contains a $5$-cycle, then we may colour this $5$-cycle red and the complementary $5$-cycle not red.  If some colour class, say red, contains an edge, say $e$, not in a triangle, then we may colour $K_5 \setminus e$ without monochromatic triangles (this is possible as no proper subgraph $K_5$ is list-Ramsey for $K_3$, by Claim~\ref{claim:K5}) and colour $e$ red.  If none of the above is true, then each colour induces one of the following graphs: $K_3$, $K_4$, $K_4^-$, $K_5 \setminus K_3$, or two triangles sharing a vertex.  If some colour, say red, induces $K_5 \setminus K_3$, then we colour $K_{2,3}$ with red, the remaining edge of $K_5 \setminus K_3$ with not red and the edges of the $K_3$ in the complement with two different colours other than red.  If one of the colours, say red, induces $K_4$ or $K_4^-$, then colour a $C_4$ with red and its diagonal(s) with a colour other than red.  Each of the remaining, uncoloured four edges can close at most one monochromatic triangle,  as red is not available anywhere outside of the $K_4$ we have already coloured; thus we may colour them one-by-one.  This leaves the case where every colour class is either $K_3$ or two triangles sharing a vertex.  But this is impossible, since $3$ does not divide $2e(G) = 20$.
\end{proof}

\subsection{General families}
\label{sec:general-families}

We start by proving the Deterministic Lemma for families $\cF$ with $m_2(\cF) > 2$.  To this end, we first prove Proposition~\ref{prop:cores-family} and then show that $K_6$ is not list-Ramsey with respect to any family $\cF$ with $m_2(\cF) \ge 5/2 = e_{K_6}/v_{K_6}$.

\begin{proof}[Proof of Proposition~\ref{prop:cores-family}]
  Write $m_2(\cF)= k+ \eps$, where $k \ge 2$ is an integer and $\eps
  \in [0,1)$, and suppose that $G$ is a connected $\cF$-core with
  $e_G/v_G \le k+\eps$.  It suffices to show that $K_4 \in \cF$ and $G$ is a $K_4$-core,
  since then Claim~\ref{claim:K4-core-is-K6} will allow us to conclude
  that $G = K_6$.
  We split the argument into three cases, depending on which item of
  Lemma~\ref{lem:discharging} holds.  Since $K_3$, $C_4$, and $K_4$
  are the only strictly $2$-balanced graphs with at most four
  vertices, the assumption that $m_2(\cF) > 2$ implies that the only
  graph with fewer than five vertices that can belong to $\cF$ is $K_4$.

  \medskip
  \noindent
  \textit{Case 1.}
  Suppose that $G$ has a vertex $v$ of degree at most $2k$ and let $e$ be an edge incident to $v$.  Since $G$ is an $\cF$-core, there are two copies of some $F, F' \in \cF$ in $G$ that intersect solely in $e$, and thus $\deg(v) \ge \delta(F)+\delta(F')-1$.  Since $v_F, v_{F'} > 3$, we can apply the Helpful Lemma to learn that $\delta(F), \delta(F') \ge k+1$, which gives $\deg(v) \ge 2k+1$, a contradiction.

  \medskip
  \noindent
  \textit{Case 2.}
  Suppose now that $G$ has an edge $uv$ with $\deg(u) = 2k+1$ and
  $\deg(v) \le 2k+2$ and that $\eps \ge 1/2$.  By assumption, there are two copies of some $F,
  F' \in \cF$ that intersect only in $uv$.  If $v_F, v_{F'} > 4$, we
  could use the Helpful Lemma to learn that the endpoints of every
  edge of $F$ and $F'$ touch strictly more than $2m_2(\cF) \ge
  2(k+\eps) \ge 2k+1$ edges.  This would mean that $\{u, v\}$ touch at
  least $2(2k+2)-1 = 4k+3 > \deg(u) + \deg(v) - 1$ edges in $G$, a
  contradiction.  Thus, one of $F, F'$ is a $K_4$.  Consequently,
  we have
  \[
    e_G/v_G \le m_2(\cF) \le m_2(K_4) = 5/2.
  \]
  As Case 1 rules out $\delta(G) \le 4$, the graph $G$ must be
  $5$-regular and $m_2(\cF) = 5/2$.

  Finally, we show that $G$ is a $K_4$-core.  If this were not true, then some edge $uv$ of $G$ would be the sole intersection of two copies of some graphs from $\cF$, not both of them $K_4$.  The Helpful Lemma implies that, for every $F \in \cF$ with $v_{F} > 4$, the endpoints of every edge of $F$ touch strictly more than $5$ edges.  On the other hand, the endpoints of every edge of $K_4$ touch precisely $5$ edges.  This means that at least $10$ edges of $G$ would have to touch $\{u, v\}$, a contradiction.

  \medskip
  \noindent
  \textit{Case 3.}
  Finally, suppose that $\eps \ge 7/8$ and that there are vertices $u, v_1, v_2$ with $\deg(u)
  = 2k+3$, $\deg(v_1) = \deg(v_2) = 2k+1$, and $uv_1, uv_2 \in G$.  We
  may assume that $v_1v_2 \notin G$, since otherwise we would be in
  Case~2.  By assumption, there are two copies of some $F_1, F_2 \in
  \cF$ that intersect only in $uv_1$.  Applying the Helpful Lemma, we
  learn that $\delta(F_1) > m_2(F_1)$ and $\delta(F_2) > m_2(F_2)$.
  If $m_2(F_i) \ge k+1$ for some $i \in \{1, 2\}$, then we would
  have $\deg(v_1) \ge \delta(F_1) + \delta(F_2) - 1 \ge 2k+2$, a
  contradiction.  Therefore,
  \[
    k+ 7/8 \le k + \eps =m_2(\cF) \le m_2(F_i) < k+1
  \]
  for each $i \in \{1, 2\}$, and thus the definition of $m_2(\cdot)$
  implies that $v_{F_1}, v_{F_2} \ge 10$.

  We can therefore apply the Helpful Lemma and learn that the endpoints of every edge of $F_1$ and $F_2$ touch strictly more than $2m_2(\cF) \ge 2(k+\eps) \ge 2k+1$ edges.  Because our copies of $F_1$ and $F_2$ intersect only in $uv$ and $\{u,v\}$ touches exactly $4k+3$ edges of $G$, one of the copies, say of $F_2$, contains $uv_2$.  By the Helpful Lemma, this copy of $F_2$ must therefore use strictly more than $3m_2(F_2) \ge 3(k+\eps) \ge 3k+2$ edges touching $\{u, v_1, v_2\}$.

  Finally, since $G$ is an $\cF$-core, there is another copy of some
  graph $F_3 \in \cF$ that intersects our copy of $F_2$ solely in
  $uv_2$.  The Helpful Lemma gives $\delta(F_3) \ge \lceil m_2(F_3)
  \rceil \ge k+1$.  The union of our copies of $F_1$, $F_2$, and $F_3$
  must contain at least $3k+3+2k+2+k+1 - 2 = 6k+4$ edges touching
  $\{u, v_1, v_2\}$.  However, the bounds on the degrees of $u$,
  $v_1$, and $v_2$ in $G$ imply that they touch only $6k+3$ edges, a
  contradiction.
\end{proof}

\begin{claim}
  \label{claim:K6-not-list-Ramsey}
  $K_6$ is not list-Ramsey with respect to any family $\cF$ with $m_2(\cF) \ge 5/2$.
\end{claim}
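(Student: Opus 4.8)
The plan is to colour the edges of $K_6$ uniformly at random from the prescribed lists and to estimate, by a union bound, the probability that the resulting colouring contains a monochromatic copy of some member of $\cF$, in the spirit of the proof that $K_6$ is not list-Ramsey with respect to $K_4$. Before doing so I would make two reductions. First, replacing each $F \in \cF$ by a strictly $2$-balanced subgraph of the same $2$-density (as in Section~\ref{sec:outline}) is harmless, since any colouring avoiding the new, smaller graphs also avoids the original ones; thus we may assume that $\cF$ consists of strictly $2$-balanced graphs, each with $m_2 \ge 5/2$. Second, discarding any member of $\cF$ that properly contains another member leaves the set of colourings of $K_6$ avoiding $\cF$ unchanged, and members with more than six vertices have no copy in $K_6$; so we may assume that the relevant members form a (necessarily finite, up to isomorphism) antichain $\cF_0$ of strictly $2$-balanced graphs with at most six vertices and $m_2 \ge 5/2$.

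Since every list has size two, at most two colours belong to all $e_F$ lists of the edges of a fixed copy of a graph $F$, and each such colour contributes $2^{-e_F}$ to the probability that this copy is monochromatic; it therefore suffices to show that $\sum_{F \in \cF_0} N_F(K_6)\cdot 2^{1-e_F} < 1$, where $N_F(K_6)$ is the number of copies of $F$ in $K_6$. The next step is to enumerate $\cF_0$. Strict $2$-balancedness gives $e_F = 1 + m_2(F)(v_F-2) \ge 1 + \tfrac52(v_F-2)$ for every $F \in \cF_0$, and since a graph on at most three vertices has $m_2 \le 1$, this forces one of: $v_F = 4$ and $F = K_4$; $v_F = 5$ and $e_F \ge 9$, i.e.\ $F \in \{K_5^-, K_5\}$; or $v_F = 6$ and $e_F \ge 11$, i.e.\ the complement $\overline F$ has at most four edges. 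In the last case $F$ must be $K_4$-free when $e_F = 11$ and $K_5$-free when $e_F = 12$, as otherwise $F$ would have a proper subgraph of $2$-density at least $(e_F-1)/(v_F-2)$, contradicting strict $2$-balancedness; a short check of which six-vertex graphs with at most four edges have a complement free of $K_4$, respectively $K_5$, then pins $F$ down to one of about a dozen explicit graphs: $K_4$; $K_5^-$ and $K_5$; the two $K_4$-free eleven-edge graphs $K_{2,2,2}\setminus e$ and $\overline{K_3+K_2+K_1}$; the four $K_5$-free twelve-edge graphs $K_{2,2,2}$, $K_6\setminus K_3$, $K_6\setminus P_4$, $K_6\setminus(P_3+K_2)$; the complete four-partite graph $K_{2,2,1,1}$; $K_6^-$; and $K_6$. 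Computing $N_F(K_6)$ for each of these (for example $15$ for $K_4$, $60$ for $K_5^-$ and for $\overline{K_3+K_2+K_1}$, $180$ for $K_{2,2,2}\setminus e$, and at most $180$ in every case) is routine.

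Finally, these graphs are densely linked under subgraph inclusion (for instance $K_4 \subseteq K_5^- \subseteq K_5$; the graph $K_{2,2,2}\setminus e$ is a subgraph of $K_{2,2,2}$, of $K_6\setminus P_4$, and of $K_6\setminus(P_3+K_2)$; and $\overline{K_3+K_2+K_1}$ is a subgraph of $K_6\setminus K_3$ and of $K_6\setminus(P_3+K_2)$), so an antichain $\cF_0$ can contain only a handful of them. Running through the few admissible antichains, I would verify that $\sum_{F \in \cF_0} N_F(K_6)\, 2^{1-e_F}$ is largest when $\cF_0 = \{K_4,\, K_{2,2,2}\setminus e,\, \overline{K_3+K_2+K_1}\}$, where it equals $\tfrac{15}{32} + \tfrac{180}{1024} + \tfrac{60}{1024} = \tfrac{45}{64} < 1$, and is strictly smaller in all other cases; this completes the proof. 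The main (and essentially the only) work is this last piece of bookkeeping: producing the list of candidate six-vertex graphs and checking the arithmetic for each admissible antichain. Each individual verification is elementary, but there are a few cases to get through.
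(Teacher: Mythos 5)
Your proof is correct, but it takes a genuinely different route from the paper. The paper splits into two cases: if all lists on $K_6$ are identical, it exhibits the explicit $2$-colouring into $K_{3,3}$ and $2\cdot K_3$, both of $2$-density $2 < 5/2$; otherwise it finds a vertex $v$ whose incident lists are not all identical, takes an $\cF$-free colouring of $K_6 - v = K_5$ (whose existence is borrowed from Proposition~\ref{prop:cores-family}, since no subgraph of $K_5$ can be a connected $\cF$-core), and extends it to $v$ using no colour more than twice, which is safe because every $F$ with $m_2(F) \ge 5/2$ has minimum degree at least three. Your argument instead runs a first-moment computation over a uniformly random colouring from the lists, after reducing to an antichain of strictly $2$-balanced graphs on at most six vertices with $m_2 \ge 5/2$. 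I spot-checked the key steps: the enumeration is complete (for $e_F=11$, strict $2$-balancedness forces minimum degree at least three, so the complement has four edges and maximum degree at most two, and only $P_4 \cup K_2$ and $K_3 \cup K_2 \cup K_1$ among the five such graphs have $K_4$-free complements, giving exactly your two graphs); the copy counts $15$, $180 = 6!/4$, $60 = 6!/12$ are right; the worst antichain is indeed $\{K_4,\, K_{2,2,2}\setminus e,\, \overline{K_3+K_2+K_1}\}$ with $\tfrac{480+180+60}{1024} = \tfrac{45}{64} < 1$. One point worth emphasising: the reduction to an \emph{antichain} is not merely cosmetic here, since summing $N_F \cdot 2^{1-e_F}$ over all twelve candidate graphs exceeds $1$, so a naive union bound over the whole family would fail; you correctly restrict to minimal members. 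The trade-off is clear: the paper's proof is a few lines and enumeration-free but leans on Proposition~\ref{prop:cores-family}, whereas yours is self-contained and mechanical at the cost of a dozen cases of bookkeeping.
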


\begin{proof}
  If all lists assigned to the edges of $K_6$ are identical, then we can colour so that one colour class is $K_{3,3}$ and the other $2 \cdot K_3$; both these graphs have $2$-density two.  We may thus assume that the lists incident to some vertex $v$ are not all identical.  Any $\cF$-free colouring of $K_6-v$ (at least one such colouring exists by Proposition~\ref{prop:cores-family}) can now be extended to $K_6$ by colouring the edges incident with $v$ so that no colour is repeated more than twice.  Since every graph $F$ with $m_2(F) \ge 5/2$ has minimum degree at least three, this colouring is also $\cF$-free.
\end{proof}

Finally, we prove Proposition~\ref{prop:eGvG-more-than-two}, thus establishing the Deterministic Lemma for families $\cF$ with $m_2(\cF) \le 2$.

\begin{proof}[{Proof of Proposition~\ref{prop:eGvG-more-than-two}}]
  Suppose by contradiction that a graph $G$ is minimally list-Ramsey for some family $\cF$ of graphs, each containing a cycle, and satisfies $e_G/v_G \le \min\{m_2(\cF), 2\}$.  Observe first that $\delta(G) \ge 3$ and that, for every vertex $v$ of degree three, the lists assigned to the edges incident with $v$ are identical.  Indeed, if this were not true, we could extend any $\cF$-free colouring of $G-v$ to $G$ by assigning different colours to all edges incident with $v$; such a colouring would also be $\cF$-free as every graph in $\cF$ has minimum degree at least two.  We may also assume that $m_2(\cF) \ge e_G/v_G > 3/2$, as otherwise $G$ is $3$-regular and Claim~\ref{claim:3-regular-2-colouring} implies that it is not list-Ramsey for $\cF$.  In particular, since $m_2(C_4) = 3/2$, the only graphs with fewer than five vertices that can belong to $\cF$ are $K_3$ and $K_4$.

\medskip
\noindent
\textit{Case 1. $e_G/v_G \in (3/2, 2)$.}
We apply Lemma~\ref{lem:discharging} to $G$ with $k+\eps = e_G/v_G$.  Since we have already ruled out the possibility that $\delta(G) \le 2$, there are only two subcases.

\medskip
\noindent
\textit{Case 1a.}
Suppose first that $G$ has an edge $uv$ with $\deg(u) = 3$ and $\deg(v) \le 4$.  Recall that the three lists assigned to the edges incident with $u$ are the same, say $\{\text{red}, \text{blue}\}$.  Consider an arbitrary $\cF$-free colouring of $G-u$.  Since $v$ has at most three neighbours in $G-u$, it is incident with at most one blue edge or at most one red edge; without loss of generality, this rare colour is blue.  Let $w$ be the other endpoint of the blue edge incident with $v$.  Colour two edges incident with $u$ blue and one edge red so that $uw$, if it is an edge of $G$ at all, is not blue.

If this colouring was Ramsey for $\cF$, we would see a monochromatic copy $F$ of a graph from $\cF$ that contains $u$.  As $F$ has minimum degree two, it cannot be red and, if it is blue, it must also contain both $v$ and $w$ and the other blue neighbour of $u$ (which is distinct from $w$).  However,  since $m_2(F) > 3/2$, it cannot contain a path of length three whose two centre vertices have degree two; indeed $\delta(K_4) > 2$ and the Helpful Lemma implies that, for every $F \in \cF$ with $v_F > 4$ and every pair $\{a,b\}$ of vertices of $F$, we have $\bar{e}_F(\{a,b\}) > 3$.

\medskip
\noindent
\textit{Case 1b.}
Suppose now that $\eps \ge 7/8$ and that $G$ has two edges $vu_1$ and $vu_2$, where $v$ is of degree $5$ and $u_1, u_2$ are non-adjacent vertices of degree $3$.  Recall that, for both $i \in \{1, 2\}$, the three lists assigned to the edges incident with $u_i$ are identical. Consider an arbitrary $\cF$-free colouring of $G - \{u_1, u_2\}$.  Since $v$ has degree three in this graph, for each $i \in \{1, 2\}$, there is a colour $c_i$ belonging to the lists seen at $u_i$ that appears on at most one edge of $G - \{u_1, u_2\}$ incident with $v$ (it may happen that $c_1 = c_2$).  Now, for each $i \in \{1, 2\}$, colour two edges incident to $u_i$ with $c_i$ (and the third edge in the other available colour) so that there is no triangle whose all edges are coloured $c_i$; this is possible as $v$ had at most one incident edge coloured $c_i$ and $u_1$ and $u_2$ are not adjacent.

If this colouring was Ramsey for $\cF$, we would see a monochromatic copy $F$ of a graph from $\cF$ that contains $u_i$ for some $i\in\{1,2\}$.  As $F$ has minimum degree two, it must be coloured $c_i$ and it must contain both $v$ and at least one more neighbour of $v$ connected by an edge coloured $c_i$.  If there is only one such neighbour, then $F$ would contain a path of length three whose two centre vertices have degree two, which is impossible (see Case 1a).  Otherwise, it must be that $c_1 = c_2$, there are exactly two such neighbours, and $\bar{e}_F(\{v, u_1, u_2\}) = 5$.  However, since $m_2(F) \ge m_2(\cF) \ge 1+\eps \ge 2 - 1/8$ but $F \neq K_3$, then either $m_2(F) \ge 2$, in which case $\delta(F) \ge 3$, or $15/8 \le m_2(F) < 2$, in which case $F$ has at least ten vertices.  In both cases, the Helpful Lemma implies that $\bar{e}_F(\{v, u_1, u_2\}) \ge 6$, a contradiction.

\medskip
\noindent
\textit{Case 2. $e_G/v_G = 2$.}
Since we have already ruled out the possibility that $\delta(G) \le 2$, either $\delta(G) = 3$ or $G$ is $4$-regular.  Let $u$ be a vertex of smallest degree in $G$ that minimises $e(N(u))$ and consider an arbitrary $\cF$-free colouring of $G-u$.  We will show that it can be extended to an $\cF$-free colouring of $G$, unless $G = K_5$.  As every $F \in \cF$, other than the triangle, has minimum degree at least three, it is enough to show there is a colouring of the edges incident with $u$ that uses each colour at most twice and has no monochromatic triangles containing $u$.  It is straightforward to check that such a colouring exists when $\deg(u) = 3$, so we will assume that $G$ is $4$-regular.

\medskip
\noindent
\textit{Case 2a. $e(N(u)) \le 4$.}
Denote the four neighbours of $u$ by $v_1, \dotsc, v_4$.  If the four lists assigned to $uv_1, \dotsc, uv_4$ are identical, say $\{\text{red}, \text{blue}\}$, then we can colour as follows:  Since $N(u)$ is not a clique, we may assume that $v_1v_2 \notin G$.  If $v_3v_4 \in G$ and it is coloured red, colour $uv_3, uv_4$ blue and $uv_1, uv_2$ red; otherwise, colour $uv_3, uv_4$ red and $uv_1, uv_2$ blue.  It is straightforward to check that no monochromatic triangle was created.

If the four lists assigned to $uv_1, \dotsc, uv_4$ are not identical, then some colour, say red, appears in the lists of at most two edges incident with $u$.  If red appears only once, say in the list of $uv_1$, then we colour $uv_1$ red and the remaining three edges as in the case where $\deg(u) = 3$.  Suppose now that red appears in two lists, say of $uv_1$ and $uv_2$. Since $N(u)$ induces at most four edges, either $v_3v_4 \notin G$ or one of $v_1, v_2$ is not adjacent to one of $v_3, v_4$.  If $v_3v_4 \notin G$, then we colour $uv_1$ red, $uv_2$ with a colour $c$ different than red and both $v_3$ and $v_4$ with a colour different than $c$;  it is easy to check that no colour is used more than twice and that no triangle is monochromatic.  Otherwise, we may assume without loss of generality that $v_2 v_4 \notin G$.  Now, colour $uv_1$ red, $uv_2$ with a colour $c$ different than red, $uv_3$ with a colour $c' \neq c$, and $uv_4$ with a colour different than $c'$.  Again, it is easy to check that no colour was used more than twice and that no triangle is monochromatic.

\medskip
\noindent
\textit{Case 2b. $e(N(u)) \ge 5$.}
Since $G$ is connected, $4$-regular, and $e(N(v)) \ge 5$ for all $v \in V(G)$, it must be $K_5$, see Claim~\ref{claim:K5}.  Thus, it is enough to show that $K_5$ is not list-Ramsey for any family $\cF$ with $m_2(\cF) \ge 2$.  To this end, we first claim that every $2$-balanced graph with $2$-density at least $2$ that is contained in $K_5$ contains a triangle.  Indeed, if $F$ is strictly $2$-balanced and $m_2(F) \ge 2$, then $v_F \ge 3$ and $e_F \ge 2(v_F - 2) + 1 = 2v_F-3$, but every triangle-free graph $F$ satisfies $e_F \le v_F^2/4$;  it is easy to check that $2v_F - 3 > v_F^2/4$ when $v_F \in \{3, 4, 5\}$.  Consequently, any triangle-free colouring of $K_5$ will automatically be $\cF$-free for every family $\cF$ with $m_2(\cF) \ge 2$.  Finally, the fact that $K_5$ is $2$-list-colourable with respect to $K_3$ was proved in Claim~\ref{claim:K5-list-colourable}.
\end{proof}

\section{Allowing forests}
\label{sec:forests}

This section is dedicated to the case where our family contains a forest. We first take care of the special case of star forests by proving Theorem~\ref{thm:star_forests}. The rest of the section is dedicated to Theorems~\ref{thm:general-forests} and~\ref{thm:non-list-forests}.

\subsection{Star forests}

\begin{proof}[Proof of Theorem~\ref{thm:star_forests}]

For the $1$-statement, it is enough to prove the result for the usual Ramsey property. Recall that $\cF$ contains a star forest $F$ where the number of rays in each star is at most $D$. Let $k$ count the number of stars in $F$. When $p \gg n^{-(1 + 1/s)}$, the random graph will a.a.s.\ contain $r \cdot (k-1) + 1$ disjoint stars with $s$ rays (or indeed, any constant number of copies of this star). By the pigeonhole principle, any $r$-colouring of the edges of any of these stars will give a monochromatic $D$-star. By the pigeonhole principle again, at least $k$ of them must use the same colour, giving a monochromatic copy of $F$.

For the $0$-statement, assume $p \ll n^{-(1 + 1/s)}$. At this density, the connected components of $G_{n,p}$ are a.a.s.\ all trees with at most $s-1$ edges. We claim that one can colour the edges of any graph with this property from any assignment of lists of $r$ colours such that the monochromatic components are all star forests, and the maximum degree in each colour is strictly less than $D$. This would imply that the colouring does not contain any monochromatic copy of a member of $\cF$.

For every component, we start by taking a vertex $u$ of maximum degree. Colour the edges touching this vertex while making sure that we do not use the same colour $D$ times. Since its degree is at most $s-1 = r\cdot (D-1)$, we can always do this. Next, if we arrive at a new vertex $v$, we will colour the new edges that touch $v$ while making sure never to use the colour of the edge that introduced this vertex. This would ensure that all monochromatic components are star forests. In the worst case, this restriction gives us $r-1$ available colours for each new edge. 

The same argument would mean that, as long as the number of new edges introduced at $v$ is at most $(r-1)\cdot(D-1)$, we can still avoid using the same colour $D$ times. Luckily, this is always the case. Indeed, if $v$ introduced at least $(r-1)(D-1) + 1$ new edges, then $\deg(v) \ge (r-1)(D-1) + 2$ (counting the edge that led us to this vertex). Recall that by maximality $\deg(u) \ge \deg(v)$. This means that our connected component contains at least $2(r-1)(D-1) + 4 - 1$ edges, and that is strictly greater than $r(D-1) = s-1$ for all $r \ge 2$, giving a contradiction. 
\end{proof}

\subsection{Nontrivial forests}

Even after excluding star forests, if $m_2(\cF) = 1$ we can no longer make the reduction to strictly $2$-balanced graphs. Instead, we will replace the Probabilistic Lemma with the following well-known fact about sparse random graphs (see~\cite[Section~5]{JanLucRuc00}), which can be readily proved using an argument similar to the one we used to prove the Probabilistic Lemma.

\begin{lemma}
  There exists a constant $c > 0$ such that, whenever $p \le c n^{-1}$, a.a.s.\  every connected component of $G_{n,p}$ has at most one cycle.
\end{lemma}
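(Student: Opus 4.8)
The plan is to reduce the statement to the assertion that, with high probability, $G_{n,p}$ contains no connected subgraph $H$ with $e_H = v_H + 1$, and then to handle the latter by a routine first-moment computation. For the reduction, first note that a connected graph has at most one cycle precisely when its number of edges does not exceed its number of vertices. Hence, if some component $D$ of $G_{n,p}$ has two distinct cycles, then the cycle space of $D$ has dimension at least two (the two cycles, together with their symmetric difference, cannot all vanish), so $e_D \ge v_D + 1$. Starting from such a $D$ and iterating the following operation — delete a vertex of degree at most one (which preserves connectivity and does not decrease $e - v$), or, when the minimum degree is at least two, delete an edge lying on a cycle (which preserves connectivity and decreases $e-v$ by exactly one) — one reaches, after finitely many steps, a connected subgraph $H \subseteq G_{n,p}$ with $e_H = v_H + 1$. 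So it suffices to show that a.a.s.\ no such $H$ exists.

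For the first-moment bound, I would count the candidate subgraphs on a fixed set of $k$ labelled vertices: such a graph consists of a spanning tree, of which there are $k^{k-2}$ by Cayley's formula, together with two additional edges, chosen in at most $\binom{\binom{k}{2}}{2} \le k^{4}$ ways. Since any fixed such subgraph appears in $G_{n,p}$ with probability $p^{k+1}$, the expected number of connected subgraphs of $G_{n,p}$ with $k$ vertices and $k+1$ edges is at most
\[
  \binom{n}{k} \cdot k^{k-2} \cdot k^{4} \cdot p^{k+1} \le \frac{n^k}{k!}\, k^{k+2}\, p^{k+1} \le e^k k^2 \cdot n^k p^{k+1},
\]
where the last inequality uses $k! \ge (k/e)^k$. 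If $p \le c n^{-1}$ this is at most $c \cdot (ec)^k k^2 \cdot n^{-1}$. Summing over $k \ge 4$ — the inequality $\binom{k}{2} \ge k+1$ forces $k \ge 4$ — and taking $c$ small enough that $ec \le 1/2$, the series $\sum_{k \ge 4} (ec)^k k^2$ converges to an absolute constant, so by Markov's inequality and the union bound the probability that $G_{n,p}$ contains any connected subgraph with $e_H = v_H+1$ is $O(n^{-1}) = o(1)$. Combined with the reduction, this proves the lemma.

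There is no genuine obstacle here — as the excerpt notes, this fact can also be proved by an exploration process entirely analogous to (and much simpler than) the one used for the Probabilistic Lemma. The one point that must not be overlooked is that the enumeration of candidate subgraphs has to exploit connectivity: the spanning-tree factor $k^{k-2}$ is exactly what cancels the $k!$ arising from $\binom{n}{k}$, whereas the naive bound $\binom{\binom{k}{2}}{k+1}$ on the number of all (possibly disconnected) graphs with $k+1$ edges would carry a spurious factor of order $k^{k}$ and the resulting series would diverge.
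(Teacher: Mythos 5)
Your argument is correct. Note, though, that the paper does not actually prove this lemma: it cites the standard reference (Janson--{\L}uczak--Ruci\'nski, Section~5) and remarks that the statement can be recovered by running the same exploration process used for the Probabilistic Lemma --- there, one would grow a component edge by edge, observe that every step beyond a spanning tree is a ``degenerate'' step increasing $e-v$, and truncate after two such steps. Your route is the more classical one: reduce to the non-existence of a connected subgraph $H$ with $e_H = v_H + 1$ (your cycle-space argument and the leaf/cycle-edge deletion process are both sound, and one could even shortcut them by taking $H$ to be any spanning tree of the offending component plus two of its surplus edges), and then kill these by a first moment using Cayley's formula. The key point you correctly flag --- that the enumeration must exploit connectivity so that the $k^{k-2}$ spanning-tree count cancels the $k!$ from $\binom{n}{k}$ --- is exactly what the exploration process automates in the paper's suggested approach; your version makes it explicit at the cost of invoking Cayley's formula, while the exploration process avoids Cayley's formula at the cost of bookkeeping. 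Either proof is complete and the computations in yours check out (in particular the constraint $ec \le 1/2$ and the bound $O(n^{-1})$ on the failure probability).
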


In view of the lemma, in order to prove the bulk of Theorems~\ref{thm:general-forests} and~\ref{thm:non-list-forests}, it suffices to devise a colouring scheme for graphs whose connected components are either trees or unicyclic graphs, i.e., graphs that consist of exactly one cycle, with trees rooted at some of its vertices.  Note that, for any graph $G$, the condition $m(G) \le 1$ is synonymous with $G$ having the aforementioned property.

\begin{prop}
  \label{prop:BC}
  Let $\cF$ be a family of graphs and let $r \ge 2$ be an integer. Then every graph $G$ with $m(G) \le 1$ is not $r$-list-Ramsey with respect to $\cF$ unless $r = 2$ and $\cF$ contains both a $\cB$-graph and a $\cC^*$-graph.
\end{prop}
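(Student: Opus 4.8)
The plan is to strip away the randomness and reduce the statement to a deterministic colouring fact about sparse graphs, which I then establish by a rooted exploration of $G$.

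First I would note that $m(G)\le 1$ holds exactly when every connected component of $G$ is a tree or a unicyclic graph, i.e.\ a cycle $Z$ with a rooted forest attached at its vertices. Since $\Ram_\ell(\cF,r)\subseteq\Ram_\ell(\cF,2)$, it suffices to show $G\notin\Ram_\ell(\cF,2)$: for every assignment of $2$-element colour lists to $E(G)$ one must find a colouring from the lists with no monochromatic member of $\cF$. We may assume we are outside the exceptional case, so $\cF$ contains no $\cB$-graph or contains no $\cC^*$-graph; the two cases are mirror images, so suppose the former. Then each $F\in\cF$ has a connected component $F'$ that is not a subgraph of a broom, and any monochromatic copy of $F$ contains a monochromatic copy of $F'$. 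Since a subgraph of a $\cB$-graph is again a $\cB$-graph, it is enough to prove: for every $2$-element list assignment to $E(G)$ there is a colouring from the lists in which every colour class is a $\cB$-graph (in the mirror case, a $\cC^*$-graph). This disposes of all of $\cF$ at once, so infinite families cost nothing.

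To produce such a colouring I would treat the components of $G$ one by one, since a disjoint union of $\cB$-graphs is a $\cB$-graph. A tree component I would root and colour so that every edge gets a list-colour distinct from that of its parent edge (root edges are unconstrained); then no two consecutive edges of a root-to-leaf path agree, so each colour class inside the tree is a matching. For a unicyclic component with cycle $Z$ and rooted trees $T_1,\dots,T_k$ attached at $z_1,\dots,z_k\in V(Z)$, I would first colour $E(Z)$: for the $\cB$-graph version, so that at most one vertex of $Z$ meets two cycle-edges of the same colour (a greedy pass around $Z$ achieves this, since each list has size $2$); for the $\cC^*$-graph version, so that each colour class on $Z$ is a matching — or, when $Z$ is odd and its lists admit no proper colouring, so that one class is all of $Z$ (colour $Z$ monochromatically, possible because in that degenerate case the lists share a colour). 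Then I would orient $Z$, and for each $z\in V(Z)$ colour every edge of the attached tree incident with $z$ with a list-colour different from that of the cycle-edge pointing into $z$, and colour all deeper tree edges to differ from their parent.

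Finally I would verify the claim and flag the delicate part. Fix a colour $c$: the deep tree edges of colour $c$ form matchings vertex-disjoint from $Z$, so they contribute only isolated edges. Near $Z$, the rules force that whenever the cycle-edge pointing into a vertex $z$ has colour $c$ then $z$ carries no colour-$c$ tree edge; combined with the scarcity of colour-$c$ edges on $Z$ (a matching, plus at most one $P_3$ at the exceptional vertex in the first version, or the whole odd cycle in the degenerate subcase of the second), a short case check shows every colour-$c$ component is a star, an isolated edge, a broom arising at the exceptional vertex, or (only in the $\cC^*$-version, degenerate subcase) the odd cycle $Z$ itself — all of which are $\cB$-graphs, respectively $\cC^*$-graphs. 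I expect this verification to be the real work: the colouring rules must exclude a monochromatic double star (two adjacent vertices each carrying two hairs of a colour, which is not a subgraph of a broom) and, in the $\cC^*$-version, a monochromatic path on $Z$ with pendant hairs of its colour (which is neither a star nor a subgraph of an odd cycle), and it is precisely the "orient into $z$" rule together with the chosen cycle-colouring that kills these. For $r\ge 3$ one can instead run the same scheme with lists of size $\ge 3$ and obtain colour classes that are genuine star forests, which additionally handles every family containing no star forest.
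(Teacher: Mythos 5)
Your proposal is correct and takes essentially the same route as the paper's: both reduce the problem to making every colour class a $\cB$-graph (resp.\ a $\cC^*$-graph), colour trees and ``generic'' components by forcing each edge to differ from its parent/out-edge (note the resulting colour classes are star forests, not matchings---siblings may receive the same colour---but star forests are all that is needed), and handle the one hard case of an odd cycle with identical $2$-lists by making the cycle monochromatic (for the $\cC^*$ version) or leaving a single monochromatic cherry on it (for the $\cB$ version) and propagating outwards. The only case neither you nor the paper's proof covers is $r\ge 3$ with $\cF$ containing a star forest, where the statement as literally written in fact fails (e.g.\ $K_{1,4}$ with identical lists of size $3$ is $3$-list-Ramsey for $K_{1,2}$, and $m(K_{1,4})<1$), so this is a defect of the statement rather than of your argument.
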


Further, since every graph $G$ with $m(G)=1$ appears in $G_{n,p}$ with probability $\Omega(1)$ whenever $p = \Omega(n^{-1})$, the following proposition completes the proof of Theorem~\ref{thm:general-forests}.

\begin{prop}
  \label{prop:BC-inverse}
  For every family $\cF$ of graphs that contains both a $\cB$-graph and a $\cC^*$-graph, there exists a graph $G$ with $m(G) \le 1$ that is list-Ramsey with respect to~$\cF$.
\end{prop}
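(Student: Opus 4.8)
The plan is to exhibit, for a family $\cF$ containing a $\cB$-graph $B$ and a $\cC^*$-graph $C$, a graph $G$ with $m(G)\le 1$ that is in fact $2$-Ramsey for $\{B,C\}$ --- and hence $2$-list-Ramsey for $\cF$, since $\Ram(\{B,C\},2)\subseteq\Ram_\ell(\{B,C\},2)\subseteq\Ram_\ell(\cF,2)$ (use identical lists for the first inclusion and $\{B,C\}\subseteq\cF$ for the second). Write $\mathrm{Broom}(k)$ for the broom with a path of length two and $k$ hairs. Since every component of $B$ embeds in some broom, $B\subseteq m_B\cdot\mathrm{Broom}(K_B)$ for suitable $m_B,K_B$, and since $\mathrm{Broom}(K_B)$ contains the star $K_{1,K_B+1}$, a monochromatic copy of $m_B$ vertex-disjoint brooms $\mathrm{Broom}(K_B)$ already contains a monochromatic copy of $B$. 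Thus it suffices to design $G$ so that any $2$-colouring forces either $m_B$ disjoint monochromatic copies of $\mathrm{Broom}(K_B)$ in one colour, or a monochromatic copy of $C$.

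The basic gadget is unicyclic. For an odd $\ell\ge 3$ and a large $D$, let $\Lambda_{\ell,D}$ be the cycle $C_\ell$ with $D$ pendant leaves attached at each of its vertices, so $m(\Lambda_{\ell,D})=1$. The key claim is: if $D$ is large enough in terms of $K_B$, then in any $2$-colouring of $\Lambda_{\ell,D}$, either some colour contains $\mathrm{Broom}(K_B)$, or all $\ell$ cycle-edges receive the same colour. Indeed, if the cycle is not monochromatic then, because $\ell$ is odd, two consecutive cycle-edges share a colour $c$, so some cycle-vertex has a length-two ``handle'' of colour $c$ running along the cycle into it. A vertex into which a colour-$c$ handle points cannot carry $K_B$ pendants of colour $c$, or else it is the centre of a $c$-coloured $\mathrm{Broom}(K_B)$ with $K_B$ of those pendants as hairs and the handle as shaft. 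Tracking around the cycle, for each vertex, the colours of its two cycle-edges together with the majority colour of its pendants, these obligations turn out to be mutually inconsistent on an odd cycle unless the cycle is monochromatic --- so a broom is forced. Finally, when the cycle is monochromatic in colour $c$ and no monochromatic broom occurs, every pendant must receive colour $\bar c$ (a pendant of colour $c$ would, together with the $c$-handles available everywhere on a monochromatic cycle, create a $c$-broom); hence colour $c$ then contains $C_\ell$ and every path on at most $\ell$ vertices, while colour $\bar c$ contains $\ell$ vertex-disjoint stars $K_{1,D}$.

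To finish, let $G$ be a disjoint union of many vertex-disjoint copies of such gadgets: one family of copies of $\Lambda_{\ell,D}$ for each odd length $\ell$ occurring as an odd-cycle component of $C$, together with one family in which $\ell$ is a large odd number exceeding all component sizes of $C$ and $D$ exceeds all degrees and path-lengths in $C$; then $m(G)=1$. Given a $2$-colouring of $G$: if at least $m_B$ of the gadgets produce a monochromatic $\mathrm{Broom}(K_B)$ in a common colour we obtain a monochromatic copy of $B$ and are done. Otherwise almost every gadget has its cycle monochromatic, and a pigeonhole over the finitely many colours and gadget types yields a single colour that simultaneously contains a vertex-disjoint copy of each component of $C$: an odd cycle $C_{\ell'}$ from a matching-type gadget for each odd-cycle component, a long path inside a monochromatic long cycle for each path component, and a star $K_{1,D}$ (among the pendant stars, or inside one of the brooms) for each star component --- that is, a monochromatic copy of $C$.

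The step I expect to be the real obstacle is the key claim of the second paragraph: that a non-monochromatic edge-colouring of a pendant-decorated odd cycle is always forced to spawn a monochromatic broom. This is a finite but genuinely case-heavy ``discharging around the cycle'' argument, and the delicate part is keeping straight the interaction between handle-colours and majority pendant-colours while exploiting the parity of $\ell$. A second, more bureaucratic difficulty is the last pigeonhole when $C$ is disconnected with several odd-cycle components of distinct lengths (or odd-cycle and star components together), since the monochromatic substructures produced by different gadgets could a priori land in different colours; overcoming this requires a more careful assembly of $G$ --- linking sub-gadgets through shared vertices where the pseudoforest constraint permits, and otherwise coordinating their colours with the help of a larger palette, which is precisely where list-colourings are genuinely needed rather than ordinary $2$-colourings.
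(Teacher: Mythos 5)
There is a genuine gap, and it sits exactly where you park it as a ``bureaucratic difficulty'' at the end: the coordination of colours between the gadgets responsible for the different components of the $\cC^*$-graph is not a detail but the entire content of the proposition. Your opening plan --- to build a $G$ with $m(G)\le 1$ that is \emph{$2$-Ramsey} for $\{B,C\}$ --- is provably impossible in general. Take $\cF_1=\{B,\,C^1\cup C^2\}$ with $B$ a broom and $C^1,C^2$ odd cycles of distinct lengths: by Proposition~\ref{prop:BC-non-list} (since $\Aux(\cF_1)\cong K_2$ is $2$-colourable and $\cF_1$ has no star forest) no graph with $m(G)\le 1$ is $2$-Ramsey for $\cF_1$. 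Concretely, in your construction the adversary colours every $\Lambda_{\ell_1,D}$ so that its cycle is red and every $\Lambda_{\ell_2,D}$ so that its cycle is blue (pendants the opposite colour); no colour class then contains both an $\ell_1$-cycle and an $\ell_2$-cycle, so your final pigeonhole ``yields a single colour containing a copy of each component of $C$'' simply fails. The same problem occurs between odd-cycle components and star components: your own analysis of $\Lambda_{\ell,D}$ puts the monochromatic cycle and the monochromatic pendant stars in \emph{opposite} colours. The ``linking through shared vertices'' escape is also blocked, since identifying vertices of two cycle gadgets creates a component with two cycles and hence $m(G)>1$.

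The missing idea is the explicit list assignment that the paper uses. Let $Y$ have components $S^1,\dotsc,S^k$ (stars) and $C^1,\dotsc,C^m$ (odd cycles), and let $A$ be the disjoint union of one gadget per component ($F(B,C^j)$ for cycles and a two-layer tree $T(B,S^j)$ for stars --- the latter forces a monochromatic $S^j$ in \emph{both} colours unless it contains a monochromatic broom, which resolves the cycle-versus-star mismatch above). Take a palette of $k+m+1$ colours and, for each of the $\binom{k+m+1}{2}$ pairs $\{a,b\}$, a fresh copy of $A$ all of whose edges get the list $\{a,b\}$. If no monochromatic broom appears, each component of $Y$ then shows up monochromatically in all but at most one of the $k+m+1$ palette colours, so some single colour hosts all $k+m$ of them, giving a monochromatic $Y$; a further disjoint union and pigeonhole handles the $\cB$-graph having several components. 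Without this (or an equivalent) list construction your argument only covers the case where the $\cC^*$-graph is connected, and the ``key claim'' about $\Lambda_{\ell,D}$ in your second paragraph --- which is correct and admits a short proof via maximal monochromatic subpaths of the odd cycle --- is not where the real work lies.
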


Finally, we turn to the usual (non-list) Ramsey threshold for families of graphs that contain both a $\cB$-graph and a $\cC^*$-graph.  The classification there is slightly more complex, and is described using an auxiliary hypergraph.

\begin{dfn}
  Given a family of graphs $\cF$ that contains both a $\cB$-graph and a $\cC^*$-graph, we define an auxiliary hypergraph $\cA = \Aux(\cF)$ in the following way.  The vertices of $\cA$ are all odd cycles $C_{2\ell+1}$, where $\ell \ge 1$ is an integer.  Further, for every $F \in \cF$ that is a $\cC^*$-graph whose non-star components are $F^1, \dotsc, F^m$, we add to $\cA$ the edge $\{C^1, \dotsc, C^m\}$ for every choice of (not necessarily distinct) odd cycles $C^1, \dotsc, C^m$ such that $F^i \subseteq C^i$ for all $i$.
\end{dfn}

For example, let $C^1$ and $C^2$ be two odd cycles of different lengths and let $B$ be a broom with at least two hairs. Then, taking a family $\cF_1$ which consists of the broom $B$ and the disjoint union $C^1 \cup C^2$ will give us $\Aux(\cF_1)$ which is isomorphic to the graph $K_2$. If we take another odd cycle $C^3$ such that the lengths are still distinct, then $\cF_2 \coloneqq \{ B, C^1 \cup C^2, C^1 \cup C^3, C^2 \cup C^3\}$ will have $\Aux(\cF_2) \cong K_3$. One last example, showcasing the definition for paths, is the family $\cF_3 \coloneqq \{B, C^1 \cup P \}$, where $P$ is a path with at least four vertices.  In this case, $\Aux(\cF_3)$ is infinite, with an edge $\{C^1, C\}$ for every odd cycle $C$ with $v(C) \ge v(P)$.

The following proposition completes the proof of Theorem~\ref{thm:non-list-forests}.

\begin{prop}
  \label{prop:BC-non-list}
  Let $\cF$ be a family of graphs that contains both a $\cB$-graph and a $\cC^*$-graph.  There exists a graph $G$ with $m(G) \le 1$ that is $2$-Ramsey for $\cF$ if and only if $\cF$ contains a star forest or $\Aux(\cF)$ is not $2$-colourable.
\end{prop}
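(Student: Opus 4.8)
The plan is to prove the two directions of the equivalence separately.

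\textbf{Existence of a Ramsey graph when $\cF$ contains a star forest or $\Aux(\cF)$ is not $2$-colourable.} If $F^\star\in\cF$ is a star forest with $m$ components and maximum degree $\Delta$, take $G$ to be $2m$ disjoint copies of $K_{1,2\Delta}$: then $m(G)<1$, and in any $2$-colouring each copy contains a monochromatic $K_{1,\Delta}$, so by pigeonhole $m$ pairwise disjoint monochromatic stars of the same colour appear and host a copy of $F^\star$. Otherwise, use a compactness argument (De Bruijn--Erd\H{o}s) to fix a \emph{finite} non-$2$-colourable subhypergraph $\cA_0\subseteq\Aux(\cF)$; its vertices are finitely many odd cycles and its edges come from finitely many $\cC^*$-graphs $D_1,\dots,D_q\in\cF$. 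Fix also a $\cB$-graph $B\in\cF$ and let $h$ bound the number of hairs of its components. Build $G$ by taking, for each odd cycle $C$ occurring as a vertex of $\cA_0$, a large number $M$ of disjoint \emph{decorated} copies of $C$, a decorated copy being $C$ together with, at each cycle-vertex $w$, a new vertex $x_w$ joined to $w$ and to $N=N(B)$ fresh leaves; every component is unicyclic, so $m(G)\le 1$.

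The heart of the construction is the claim that any $\cF$-free $2$-colouring of $E(G)$ makes the cycle of every decorated copy monochromatic. One shows that if the two cycle-edges at some $w$ receive different colours, then the constraint of avoiding a monochromatic broom centred at $x_w$ (exploiting the large degree of $x_w$ and the bound $h$, and using that the remaining cheap components of $B$ are available in abundance, so that a monochromatic broom would complete a monochromatic copy of $B$) forces a rigid local pattern at $w$ that propagates to the neighbouring cycle-vertices and eventually demands an alternating proper edge-colouring of an odd cycle — a contradiction. Granting the claim, for each odd cycle $C$ in $V(\cA_0)$ at least $M/2$ of its copies share a colour $\chi(C)$; since $\cA_0$ is not $2$-colourable, $\chi$ makes some edge of $\cA_0$ monochromatic, giving a $D_k$ whose components all embed into $\chi$-monochromatic copies of cycles in $V(\cA_0)$ of a single colour. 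Choosing pairwise disjoint such copies (there are at least $M/2$ of each) yields a monochromatic copy of $D_k$, contradicting $\cF$-freeness; hence $G$ is $2$-Ramsey for $\cF$.

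\textbf{No Ramsey graph when $\cF$ has no star forest and $\Aux(\cF)$ is $2$-colourable.} We show every $G$ with $m(G)\le 1$ has an $\cF$-free $2$-colouring. First read off the structure forced by $2$-colourability: $\Aux(\cF)$ has no singleton edge, so every $\cC^*$-graph in $\cF$ has at least two non-star components, and the degenerate configurations in which several non-star components of one $\cC^*$-graph could be embedded into a single odd cycle are excluded. Fix a proper $2$-colouring $\psi$ of $\Aux(\cF)$ and colour a given $G$ as follows: colour each tree component and each even-cycle unicyclic component so that every monochromatic component is a star (for a tree, colour the edge between a depth-$d$ vertex and its children by the parity of $d$; for an even cycle, alternate and treat the pendant trees likewise); and for each unicyclic component whose cycle $Z$ has odd length $g$, colour $Z$ entirely with $\psi(C_g)$, colour the pendant edges meeting $Z$ with the opposite colour, and colour the remaining pendant-tree edges so that every monochromatic component inside them is a star. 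Then the only monochromatic non-star structures in $G$ are the cycles $Z$ themselves, and a monochromatic copy of any $F\in\cF$ would yield a monochromatic hyperedge of $\Aux(\cF)$ under $\psi$ — the star-components of $F$ are immaterial since $\cF$ has no star forest, each non-star component of $F$ must be embedded into some $Z=C_g$ whose colour then matches that of the copy, and by the structural restrictions these pick out distinct odd-cycle vertices of the hyperedge — contradicting that $\psi$ is proper. Hence no $G$ with $m(G)\le 1$ is $2$-Ramsey for $\cF$.

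\textbf{Main obstacle.} The difficult point is the decorated-cycle construction, and in particular proving that an $\cF$-free colouring forces the decorated odd cycles to be monochromatic: one must calibrate the decoration (the size $N$ of the pendant stars relative to the hair-number $h$ of $B$) so that avoiding a monochromatic broom really propagates into an impossible alternating colouring of an odd cycle, while also ensuring that the other components of $B$, and any path- or star-components of the $D_k$, are realised in the correct colour. The structural bookkeeping in the converse direction — turning ``$\Aux(\cF)$ is $2$-colourable'' into precisely the restrictions on $\cF$ that make the colouring above $\cF$-free — is elementary but must be carried out carefully.
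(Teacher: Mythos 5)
The converse direction (no Ramsey graph when $\cF$ has no star forest and $\Aux(\cF)$ is $2$-colourable) and the star-forest case of the forward direction match the paper's argument and are fine. The genuine gap is in your decorated-cycle construction for the case where $\Aux(\cF)$ is not $2$-colourable, and it is not merely a missing calculation: the construction as described does not work. You attach to each cycle-vertex $w$ a pendant vertex $x_w$ carrying $N$ fresh leaves, so the only vertices of large degree are the $x_w$'s, which sit at distance one from the cycle. An adversary can colour every bridge $wx_w$ red and every leaf at $x_w$ blue. If the broom $B$ has $h\ge 3$ hairs, the head of any copy of $B$ must map to some $x_w$; a blue broom headed at $x_w$ would need its blue path to use the red edge $wx_w$, while a red broom headed at $x_w$ has at most one red hair available. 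Hence no monochromatic broom exists for any choice of $N$, and the adversary is free to colour every cycle non-monochromatically. For the paper's family $\cF_2=\{B,\,C^1\cup C^2,\,C^1\cup C^3,\,C^2\cup C^3\}$ with $\Aux(\cF_2)\cong K_3$ and $B$ a broom with three hairs, this colouring is $\cF_2$-free on your $G$, so your $G$ is not Ramsey. The claimed propagation from $x_w$ into the cycle never gets started because the colour of the bridge $wx_w$ decouples the pendant star from the cycle. The fix --- which is what the paper does with $F(B,C)$ --- is to place the high-degree vertex on the cycle itself by attaching $2b-1$ pendant rays directly to each cycle vertex; then every cycle vertex has a majority ray-colour, and the analysis of the maximal monochromatic arcs of the odd cycle forces either a monochromatic broom or a monochromatic cycle (Claim~\ref{claim:broom-cycle}).

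A second, smaller omission: even granting a monochromatic hyperedge $\{C^1,\dotsc,C^k\}$ of $\cA_0$ in some colour $\chi$, a monochromatic copy of the generating $\cC^*$-graph $D_k$ also requires its star components in colour $\chi$, and your construction provides nothing that guarantees these. The paper adds components $T(B,S)$ and uses Claim~\ref{claim:broom-star} to obtain copies of the relevant star in both colours (or else a monochromatic broom), together with a further pigeonhole over many disjoint copies to handle multiplicities of repeated cycle types within one hyperedge and to assemble a multi-component $\cB$-graph from monochromatic brooms. Analogous bookkeeping would have to be added to your argument even after the main construction is repaired.
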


Carrying on with our example from before, $\Aux(\cF_1)$ is $2$-colourable, while $\Aux(\cF_2)$ is not, meaning that the Deterministic Lemma would hold for the former and fail for latter. Note that $\Aux(\cF_3)$ is $2$-colourable iff $v(P) > v(C^1)$. Indeed,  when $v(P) \le v(C^1)$, then $\Aux(\cF_3)$ contains the singleton edge $\{C^1\}$.

\begin{remark}
Note that the combination of these propositions implies a discrepancy in the behaviour of the thresholds for the list-Ramsey and the usual Ramsey properties.  Indeed, for $\cF_1$ both thresholds appear at $n^{-1}$, but while the usual Ramsey threshold is semi-sharp, $G_{n,c/n}$ is $2$-list-Ramsey for $\cF_1$ with probability $\Omega(1)$ for every $c > 0$.
\end{remark}

We prove Propositions~\ref{prop:BC}--\ref{prop:BC-non-list} in the three subsections that follow.

\subsection{Proof of Proposition~\ref{prop:BC}}
\label{sec:proof-proposition-BC}

A connected graph $G$ with $m(G) \le 1$ whose every edge is assigned a list of $r \ge 2$ colours is called \emph{nontrivial} if it contains an odd cycle, $r=2$, and all the lists of colours for edges in the cycle are identical; otherwise, we call $G$ \emph{trivial}.

\begin{claim}
  \label{claim:trivial-colouring}
  Suppose that every edge of a connected graph $G$ with $m(G) \le 1$ is assigned a list of $r \ge 2$ colours.  There is a colouring from these lists such that every monochromatic subgraph is a star forest unless $G$ is nontrivial.
\end{claim}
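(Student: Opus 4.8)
The plan is to prove Claim~\ref{claim:trivial-colouring} by describing an explicit colouring procedure that works whenever $G$ is trivial, treating the two ways $G$ can be trivial separately and handling the underlying structure (tree versus unicyclic graph) within each.

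First I would reduce to the case where $G$ is \emph{nontrivial-looking but has a non-identical list somewhere}, versus $G$ is a tree, versus $r \ge 3$. If $G$ is a tree, then $m(G) < 1$ and there are no cycles at all, so a monochromatic subgraph can only fail to be a star forest if it contains a path of length three; here a greedy colouring suffices: root the tree, colour the edges in BFS order, and when colouring the edge from a vertex $v$ to its parent, pick any colour in its list that differs from the colour of the edge from $v$ to the parent's parent (possible since lists have size $\ge 2$). This guarantees no vertex is an internal vertex of a monochromatic path of length three, so every monochromatic component is a star. If $r \ge 3$, then on the unique cycle $v_1 v_2 \dots v_\ell v_1$ we have three colours available, so we can colour the cycle edges so that no two consecutive cycle edges share a colour and, in fact, so that no colour appears on more than, say, two cycle edges in a way that would create a monochromatic path of length three through a cycle vertex; then colour the pendant trees greedily as above, always avoiding the colour of the (unique) cycle/path edge already incident to the attachment point. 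The same greedy idea works when not all lists are identical: pick an edge $e = xy$ whose list differs from that of an adjacent edge; delete $e$ or treat $x$ as a root and propagate the greedy colouring outward, using the list discrepancy to break the one cycle.

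The core case is therefore $r = 2$, all lists equal to $\{\text{red},\text{blue}\}$, and $G$ unicyclic with \emph{even} cycle $C = v_1 \dots v_{2k} v_1$ (the odd-cycle case is exactly when $G$ is nontrivial, which is excluded). Here I would $2$-colour the even cycle properly (alternating red, blue), so that on the cycle itself every vertex sees one red and one blue cycle-edge — hence no monochromatic path of length three lies entirely on $C$, and moreover every monochromatic subgraph restricted to $C$ is a perfect matching. Then I colour each pendant tree hanging off a cycle vertex $v_i$ greedily, rooted at $v_i$, so that the first edge of the pendant tree gets the colour \emph{opposite} to... wait, rather: I colour so that at $v_i$ the two cycle-edges already use one red and one blue, and I give the pendant edges at $v_i$ colours chosen so that $v_i$ is not the centre of a monochromatic path of length three — concretely, if $v_i$ has a pendant edge, colour it (say) red only if that does not complete a red path of length three through $v_i$; since $v_i$ has exactly one red cycle-edge, colouring at most... this needs care, and is exactly where I would push the argument. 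After the root edge, propagate greedily down each pendant tree as in the tree case. The upshot is that every monochromatic component is either a single edge of $C$ plus pendant structure forming a star, or a pendant star — i.e. a star forest.

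The main obstacle is the bookkeeping at the cycle vertices: a vertex $v_i$ on the cycle already has two incident cycle edges of opposite colours, and when pendant trees are attached there, one must ensure that adding pendant edges does not turn $v_i$ into the internal vertex of a monochromatic path of length three (which would ruin the star-forest property) while still respecting that each pendant edge's list is $\{\text{red},\text{blue}\}$. The resolution is that at $v_i$ we are free to colour \emph{all} pendant edges with the colour that is \emph{not} used by... no — both colours are used by the two cycle edges, so instead the fix is: colour every pendant edge at $v_i$ with the same fixed colour, say red, and then observe that the red subgraph at $v_i$ consists of the pendant edges plus exactly one cycle edge, all sharing the centre $v_i$ — that is a star, not a path of length three, \emph{provided} the red cycle-edge's other endpoint $v_{i\pm1}$ is not itself a centre of further red edges; but $v_{i\pm1}$'s red edges are again controlled by the same rule. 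Making this local rule globally consistent — essentially a careful parity/orientation argument along the cycle and down the trees — is the crux, and I would handle it by fixing, once and for all, an orientation of each pendant tree away from the cycle and of the cycle itself, and colouring each edge by a rule depending only on its position relative to that orientation, then verifying no monochromatic path of length three survives.
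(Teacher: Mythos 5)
There is a genuine gap. Your proposal correctly identifies the right local condition to enforce (no vertex should be an internal vertex of a monochromatic path with three edges; one should also rule out a monochromatic triangle, since a connected non-star is exactly a graph containing one of these two), and your greedy tree colouring is sound. But the two cases that actually carry the content of the claim are left unresolved. First, the central case $r=2$, all lists identical, $G$ unicyclic with an even cycle: your proposed rule ``colour every pendant edge at $v_i$ with one fixed colour'' fails as stated --- if the cycle edge $v_{i-1}v_i$ is red and both $v_{i-1}$ and $v_i$ send out red pendant edges, you get a red path with three edges through $v_{i-1}v_i$ --- and you acknowledge this is ``the crux'' without supplying the fix. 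Second, the case $r=2$ with an odd cycle but non-identical lists is dismissed with ``use the list discrepancy to break the one cycle''; this genuinely requires an argument (one must locate two consecutive cycle edges and a colour $c$ in the list of the first such that the list of the second contains two colours different from $c$, and then colour the cycle in a specific order starting from there).

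The missing idea is the uniform rule you gesture at in your last sentence but never specify: fix an orientation of $G$ with out-degree at most one (orient the cycle cyclically and every pendant-tree edge towards the cycle, or towards a root if $G$ is a tree), and colour so that each edge $uv$ oriented from $u$ to $v$ receives a colour different from the colour of the out-edge of $v$. Any orientation with maximum out-degree one of a path with three edges or of a triangle contains a directed path of length two, and in such a colouring the two edges of that directed path get different colours; hence every colour class is a star forest. Building $G$ by repeatedly attaching in-oriented leaves to its core (the cycle, or a single vertex) shows such a colouring extends from the core to all of $G$, reducing everything to colouring the cycle itself nonrepetitively --- which is possible precisely unless $r=2$, the lists are identical, and the cycle is odd, i.e.\ unless $G$ is nontrivial. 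Your orientation remark is the right instinct, but without this rule and its verification the proof of the claim is not complete.
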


\begin{proof}
  Fix an arbitrary orientation of $G$ with out-degree at most one. If an edge $uv$, oriented from $u$ to $v$, is coloured differently than the the colour of the out-edge from $v$ (or if $v$ has out-degree zero), we say that $uv$ is \emph{nonrepetitive}.  Our aim is to find a nonrepetitive coloring for all of $G$, i.e., choose colours so that all edges are nonrepetitive.
   Since each connected graph that is not a star contains either a path of length three or a triangle and since each orientation of the path of length three and of $K_3$ with out-degree at most one contains a directed path of length two, the colour classes of a nonrepetitive colouring are guaranteed to be star forests.

Given any graph $H \subseteq G$ whose edges are coloured arbitrarily, and any edge $uv$ oriented from $u \notin V(H)$ to $v \in V(H)$, one can find a nonrepetitive colour for $uv$.  Since our $G$ can be built from a (directed) cycle or a single vertex using a sequence of such operations, the only reason $G$ might not admit a nonrepetitive colouring is if it contains a cycle which does not admit a nonrepetitive colouring.

  Suppose now that $G$ is a (directed) cycle.  If either $r \ge 3$ or not all colour lists are identical, then we can construct a nonrepetitive colouring as follows.   Denote the vertices of $G$ by $v_1, \dotsc, v_\ell$ so that its directed edges are $(v_i,v_{i+1})$, for $i \in \{1, \dotsc, \ell\}$, where the addition is modulo $\ell$.  Our assumption implies that there is an $i$ and a colour $c$ that belongs to the list of $v_iv_{i+1}$ such that the list of $v_{i+1}v_{i+2}$ contains at least two colours different than $c$; without loss of generality, we may assume that $i = 1$.  We now colour $v_1v_2$ with $c$, then the edges $v_\ell v_1, v_{\ell-1} v_\ell, \dotsc, v_3v_4$, and finally the edge $v_2v_3$ with a colour different than $c$ and the colour chosen for $v_3v_4$.  Finally, if $r = 2$ and all the lists are identical, then a nonrepetitive colouring exists as long as $G$ is an even cycle.
\end{proof}

\begin{proof}[Proof of Proposition~\ref{prop:BC}]

By Claim~\ref{claim:trivial-colouring}, we can colour every trivial component of $G$ such that every resulting monochromatic component will be a star forest. Therefore we will only worry about the nontrivial components of $G$. Any nontrivial component contains an odd cycle and the edges of the cycle use identical lists of two colours, say red and blue.

If $\cF$ does not contain a $\cC^*$-graph, colour the entire cycle red. After we colour the cycle, we can extend the colouring to the other edges of the component. Label the vertices of the cycle as $v_1, \dots, v_{2k+1}$ and orient its edges from $v_i$ to $v_{i+1}$ (where the addition is modulo $2k+1$).  We extend the colouring to all of $G$ by adding the edges not on the cycle one by one as leaves and choosing a nonrepetitive colour for each one, exactly as in the previous proof. This way the monochromatic components are either odd cycles or stars, meaning that we are $\cF$-free.

If $\cF$ does not contain a $\cB$-graph, colour the edges $v_1 v_2, v_3 v_4, \dotsc, v_{2k - 1} v_{2k}$ red and the remaining edges of the cycle blue (so that the only monochromatic path of length two is the blue path $v_{2k} v_{2k +1} v_1$). We use the same method as above to extend the colouring to the other edges of the component. Note that in any colour other than blue, any resulting monochromatic component will be a star. In the blue component however, the edges of the tree rooted at $v_1$ may also be coloured blue (since we only forbid the colour red, used for $v_1 v_2$). Therefore we may have monochromatic brooms. However, since $\cF$ does not contain $\cB$-graphs we will be $\cF$-free.
\end{proof}

\definecolor{reddish}{HTML}{FB5039}
\definecolor{bluish}{HTML}{1F44BC}

\begin{figure}[htb]
  \centering

  \begin{tikzpicture}[scale=0.8]

    \foreach \a in {1, 2, 3, 4, 5}{
      \draw[line width=1.5pt, color=reddish] 
      (72*\a - 72 -20:1.2) -- (72*\a -20:1.2);

      \foreach \b in {-15,0,15}{
      
        \draw[line width=1.5pt, color=bluish] 
        (\b -20 + 72* \a:2.3) -- (72*\a -20:1.2);
        
        \draw[line width=1.5pt, color=reddish]
        (\b -20 + 72* \a:2.3) -- (5 + \b -20 + 72* \a:3)
        (\b -20 + 72* \a:2.3) -- (-5 + \b -20 + 72* \a:3);
      }
    }

    \foreach \a in {0, 1, ..., 4}{
      \draw[fill] (-20 + 72* \a:1.2) circle (0.13)
      (-20 + 72* \a:2.3) circle (0.13)
      (15-20 + 72* \a:2.2) circle (0.13)
      (-15-20 + 72* \a:2.2) circle (0.13);
    }

    \draw (0,0) node at (-72:1.4) {$v_1$}
    node at (-72*2:1.4) {$v_2$}
    node at (-72*3:1.4) {$v_3$}
    node at (-72*4:1.4) {$v_4$}
    node at (-72*5:1.4) {$v_5$};

  \end{tikzpicture}
  \begin{tikzpicture}[scale=0.3]
    \draw[fill, transparent] (0,0) circle (0.2);
    \draw[fill, transparent] (6,0) circle (0.2);
  \end{tikzpicture}
  \begin{tikzpicture}[scale=0.8]

    \foreach \a in {1,3,5}{
      \draw[line width=1.5pt, color=bluish] 
      (72*\a - 72 -20:1.2) -- (72*\a -20:1.2);

      \foreach \b in {-15,0,15}{
      
        \draw[line width=1.5pt, color=reddish] 
        (\b -20 + 72* \a:2.3) -- (72*\a -20:1.2);
        
        \draw[line width=1.5pt, color=bluish]
        (\b -20 + 72* \a:2.3) -- (5 + \b -20 + 72* \a:3)
        (\b -20 + 72* \a:2.3) -- (-5 + \b -20 + 72* \a:3);
      }
    }

    \foreach \a in {2,4}{
      \draw[line width=1.5pt, color=reddish] 
      (72*\a - 72 -20:1.2) -- (72*\a -20:1.2);

      \foreach \b in {-15,0,15}{
      
        \draw[line width=1.5pt, color=bluish] 
        (\b -20 + 72* \a:2.3) -- (72*\a -20:1.2);
        
        \draw[line width=1.5pt, color=reddish]
        (\b -20 + 72* \a:2.3) -- (5 + \b -20 + 72* \a:3)
        (\b -20 + 72* \a:2.3) -- (-5 + \b -20 + 72* \a:3);
      }
    }

    \foreach \a in {0, 1, ..., 4}{
      \draw[fill] (-20 + 72* \a:1.2) circle (0.13)
      (-20 + 72* \a:2.3) circle (0.13)
      (15-20 + 72* \a:2.2) circle (0.13)
      (-15-20 + 72* \a:2.2) circle (0.13);
    }

    \draw (0,0) node at (-72:1.4) {$v_1$}
    node at (-72*2:1.4) {$v_2$}
    node at (-72*3:1.4) {$v_3$}
    node at (-72*4:1.4) {$v_4$}
    node at (-72*5:1.4) {$v_5$};

  \end{tikzpicture}
  \caption{Colourings resulting in $\cC^*$-graphs and $\cB$-graphs.}
  \label{fig:Cstar-B-colourings}
\end{figure}  

\subsection{Proof of Proposition~\ref{prop:BC-inverse}}
\label{sec:proof-proposition-BC-inverse}

The situation gets a little more complex once we allow brooms and $\cC^*$-graphs to mingle.  As we will see in this subsection, for every family $\cF$ of graphs that contains both $\cB$-graphs and $\cC^*$-graphs, there are graphs $G$ with $m(G) \le 1$ that are list-Ramsey for $\cF$.

\begin{dfn}
  Given a broom $B$ with $b$ hairs, an odd cycle $C$ of length $\ell$, and a star $S$ with $s$ rays, we define the following two graphs:
  \begin{itemize}
  \item
    $F(B, C)$ is obtained from $C$ by attaching a star with $2b-1$ rays to each vertex;
  \item
    $T(B,S)$ is a rooted tree with two layers, where the root has degree $2\cdot \max\{b+1,s\} - 1$ and each of its children has $s$ children.
  \end{itemize}
\end{dfn}

Obviously $m(F(B,C)) = 1$ and $m(T(B,S)) \le 1$. We will show that for any odd cycle and broom, there is a graph of that ilk that is Ramsey for them.
 
\begin{claim}
  \label{claim:broom-cycle}
  For every broom $B$ and odd cycle $C$, the graph $F(B,C)$ is $2$-Ramsey with respect to $\{B, C\}$.
\end{claim}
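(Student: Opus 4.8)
We show that every red/blue colouring of $E(F(B,C))$ produces a monochromatic copy of $B$ or a monochromatic copy of $C$. Observe first that the only copy of $C$ inside $F(B,C)$ is the cycle $C$ itself, since every other edge of $F(B,C)$ is pendant; thus it suffices to assume a colouring with no monochromatic $B$ and no monochromatic copy of $C$ (equivalently, the cycle $C$ is not monochromatic) and derive a contradiction. Write $C=v_1\dots v_\ell$ with $\ell$ odd and let $b\ge 1$ be the number of hairs of $B$. In $F(B,C)$ each cycle vertex $v_i$ is incident with exactly $2b+1$ edges --- its $2b-1$ rays plus two cycle edges --- so, this number being odd, one of the colours appears on at least $b+1$ of them; call it the \emph{majority colour} $\mu(i)$ at $v_i$, and note the other colour then appears on at most $b$ edges at $v_i$.

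The heart of the argument is the following local claim: if $\mu(i)=c$ and some cycle edge $v_iv_j$ (with $j\in\{i-1,\,i+1\}$) also has colour $c$, then $v_iv_j$ is the \emph{only} $c$-coloured edge at $v_j$. To see this, suppose instead that $v_j$ has a $c$-coloured edge to some $w\neq v_i$, and try to build a $c$-coloured copy of $B$ whose brush is $v_i$, whose handle is the path $v_i - v_j - w$, and whose $b$ hairs are chosen among the $c$-neighbours of $v_i$ other than $v_j$ (there are at least $b$ such, since $v_i$ has at least $b+1$ $c$-neighbours). This succeeds unless one of the required hairs must equal $w$, i.e.\ unless $w$ is itself a $c$-neighbour of $v_i$. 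If $\ell\ge 5$ this cannot happen, because two adjacent vertices of $C$ have no common neighbour in $F(B,C)$, so $w\notin N(v_i)$. If $\ell=3$ it would force $w$ to be the third triangle vertex with all three triangle edges coloured $c$ --- that is, a monochromatic copy of $C$, contrary to assumption. This is exactly the point that dictates using $2b-1$ rays: after one $c$-neighbour is spent on the handle (and, for $\ell=3$, one more possibly forbidden), $b$ hairs must still remain.

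Granting the local claim, the situation ``$\mu(i)=c$ and some cycle edge at $v_i$ has colour $c$'' can never occur: if it did, the claim would force the cycle neighbour $v_j$ in question to have exactly one $c$-edge and hence $2b$ edges of colour $\bar c$, so $\mu(j)=\bar c$, while its other cycle edge --- not coloured $c$ --- is coloured $\bar c$; iterating this around $C$ shows consecutive cycle edges always receive different colours, i.e.\ the cycle carries a proper $2$-edge-colouring, which is impossible since $\ell$ is odd. Therefore $\mu(i)=\text{red}$ forces both cycle edges at $v_i$ to be blue, and $\mu(i)=\text{blue}$ forces both to be red. In particular each cycle edge joins two vertices of equal majority colour, so connectedness of $C$ makes all the $\mu(i)$ equal; but then every cycle edge is forced to the opposite colour, so $C$ is monochromatic --- the desired contradiction.

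The genuinely delicate point, and the reason $F(B,C)$ is built the way it is, is the vertex-distinctness check inside the local claim: a cycle neighbour of $v_i$ can collide with the far endpoint of a prospective broom handle. This is precisely the short-cycle phenomenon: it only bites when $\ell=3$, it is what forces each attached star to have $2b-1$ rays, and it is the one place where the ``no monochromatic $C$'' hypothesis (not just ``no monochromatic $B$'') gets used.
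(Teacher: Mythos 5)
Your proof is correct. It proves the same statement by a genuinely different mechanism than the paper's, so a brief comparison is in order. The paper's argument is global: assuming the cycle is not monochromatic, it decomposes it into its $2k$ maximal monochromatic subpaths $P_1,\dotsc,P_{2k}$, assigns to each cycle vertex the majority colour among its $2b-1$ rays \emph{only}, notes that any $P_i$ both of whose endpoints carry the colour of $P_i$ yields a monochromatic broom, and then uses a small matching/counting argument (each endpoint agrees with exactly one of its two incident paths, so every path agrees with exactly one endpoint, consistently ``to the right'') together with the oddness of $\ell$ --- which forces some $P_i$ to have length at least two --- to exhibit a monochromatic $B$. You instead take the majority over all $2b+1$ edges at each cycle vertex and prove a local propagation lemma (if the majority colour at $v_i$ appears on a cycle edge $v_iv_j$, then $v_j$ is almost entirely the opposite colour), which, iterated around the cycle, produces a proper $2$-edge-colouring of an odd cycle --- impossible --- and otherwise forces the whole cycle to be monochromatic. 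Both proofs rest on the same two ingredients, pigeonhole on the $2b-1$ rays and the parity of $C$, but the trade-off is visible at the edges of the argument: your route requires the separate $\ell=3$ case inside the local claim (adjacent cycle vertices sharing a neighbour is exactly where the ``no monochromatic $C$'' hypothesis must be invoked a second time, as you correctly flag), whereas the paper's decomposition avoids any collision analysis but needs the normalisation of the colouring into its ``regular form''. The two proofs are of comparable length and difficulty, and yours is complete as written.
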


\begin{proof}
  Let $b$ denote the number of hairs in $B$.  Fix some $\{\text{red}, \text{blue}\}$-colouring of $F(B,C)$.  We may assume that the cycle in $F(B, C)$ is not monochromatic, since otherwise there is nothing left to prove.  Let $P_1, ..., P_{2k}$ be all the maximal monochromatic subpaths of the cycle, labeled in such a way that $P_i$ lies between $P_{i-1}$ and $P_{i+1}$.  By the pigeonhole principle, some $b$ (out of $2b-1$) rays of the star attached to every endpoint of each of these paths are coloured the same; colour each endpoint with such majority colour.  If both endpoints of some $P_i$ have the same colour as $P_i$, then there is a monochromatic copy of $B$, so we may assume that this is not the case.  This means that the colouring must have the following regular form (`right' can be changed to `left'): Each $P_i$ and its right endpoint have the same colour, which depends on the parity of $i$.  However, some $P_i$ must have length at least two (since the cycle is odd), but then $P_i$ and the star at its right endpoint contain a monochromatic copy of $B$.
\end{proof}

\begin{claim}
  \label{claim:broom-star}
  For every broom $B$ and star $S$, any $\{\text{red}, \text{blue}\}$-colouring of the edges of $T(B,S)$ admits either a monochromatic copy of $B$ or copies of $S$ in both colours.
\end{claim}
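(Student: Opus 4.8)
The plan is to argue by contradiction. Assume for contradiction that the given $\{\text{red},\text{blue}\}$-colouring of $T(B,S)$ admits neither a monochromatic copy of $B$ nor copies of $S$ in both colours; then, swapping colours if necessary, no red copy of $S$ occurs. Write $b$ for the number of hairs of $B$ and $s$ for the number of rays of $S$, and recall that the root $r$ of $T(B,S)$ has degree $d \coloneqq 2\max\{b+1,s\}-1$, with children $c_1,\dots,c_d$, each $c_i$ being joined to exactly $s$ pendant leaves. The goal is to extract a blue copy of $B$, contradicting the first assumption.

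First I would count the colours of the edges at the root. Since no red $K_{1,s}$ is present, at most $s-1$ of the edges $rc_i$ are red, so at least $d-(s-1)=2\max\{b+1,s\}-s$ of them are blue; using $\max\{b+1,s\}\ge s$ this is at least $\max\{b+1,s\}$, hence at least $b+1$. Let $I$ denote the set of indices $i$ with $rc_i$ blue, so $|I|\ge b+1$.

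Next, for each $i\in I$ I would locate a blue edge descending from $c_i$: if all $s$ edges joining $c_i$ to its leaves were red, then $c_i$ would be the centre of a red copy of $K_{1,s}=S$, which is forbidden, so some leaf $\ell_i$ of $c_i$ satisfies $c_i\ell_i$ blue. Finally I would assemble the broom: pick $i_0\in I$ and $b$ further indices $i_1,\dots,i_b\in I\setminus\{i_0\}$ (possible since $|I|\ge b+1$), and take $r$ as the broom's centre, the path $r\,c_{i_0}\,\ell_{i_0}$ as its handle of length two, and $c_{i_1},\dots,c_{i_b}$ as its $b$ hairs. Every edge used is blue — the edges $rc_{i_0},rc_{i_1},\dots,rc_{i_b}$ because their indices lie in $I$, and $c_{i_0}\ell_{i_0}$ by the previous paragraph — and the vertices involved are pairwise distinct, since the $c$'s are distinct children of $r$ and $\ell_{i_0}$ is a grandchild. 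This blue copy of $B$ is the contradiction we sought, and the claim follows.

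The whole argument is a two-step pigeonhole, so I do not anticipate a genuine obstacle; the only thing to get right is the bookkeeping around the root degree. The value $d=2\max\{b+1,s\}-1$ is exactly the smallest one for which discarding up to $s-1$ minority-coloured root edges still leaves at least $b+1$ edges of the majority colour — enough for the $b$ hairs together with the handle — and one must remember that it is the inequality $s\le\max\{b+1,s\}$ that turns the bound $2\max\{b+1,s\}-s$ into $\ge b+1$. Nothing else requires care.
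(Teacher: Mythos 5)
Your proof is correct and is essentially the same two-step pigeonhole as the paper's: the paper starts from the majority colour at the root to produce a copy of $S$ there and then either extends to a monochromatic broom or finds $S$ in the other colour at a child, while you run the identical counting contrapositively (no red $S$ forces $b+1$ blue root edges and a blue descending edge at each such child, assembling a blue $B$). The bookkeeping with $d=2\max\{b+1,s\}-1$ is handled correctly, so nothing further is needed.
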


\begin{proof}
  By the pigeonhole principle, in any $\{\text{red}, \text{blue}\}$-colouring of $T(B,S)$, its root $v$ must be connected to $m \coloneqq \max\{b+1,s\}$ vertices, say $u_1, \dotsc, u_m$, in one of the colours, say red.  We thus already have a red star with $m$ rays, and in particular a red copy of $S$.  If the edge connecting $u_1$ to any of its $s$ children is red, then we have a red copy of the broom with $m-1$ hairs, and in particular a red copy of $B$. Otherwise, all the edges connecting $u_1$ to its children are blue, meaning that we have a blue copy of $S$.
\end{proof}

\begin{proof}[Proof of Proposition~\ref{prop:BC-inverse}]
  Suppose that $\cF$ contains a $\cB$-graph $X$ and a $\cC^*$-graph $Y$.  We will construct a graph $G$ with $m(G) = 1$, with a list of two colours assigned to each edge, such that any colouring of $G$ from these lists yields a monochromatic copy of either $X$ or $Y$.

  Let $B$ be a broom with sufficiently many hairs so that every component of $X$ is a subgraph of $B$ and suppose that the connected components of $Y$ are stars $\{S^1, \dotsc, S^k\}$ and (subgraphs of) odd cycles $\{C^1, \dotsc, C^m\}$.  Define, for all $j \in \br{k + m}$,
  \[
    A_j \coloneqq
    \begin{cases}
      T(B, S^j), & j\le k, \\
      F(B, C^{j-k}), & j > k,
    \end{cases}
  \]    	
  and let $A$ be the disjoint union of all $A_j$ for $j \in \br{k+m}$.

  We first define a graph $G'$, together with an assignment of lists to its edges, in the following way. For every $\{a,b\} \in \binom{\br{k+m+1}}{2}$, add to $G'$ a disjoint copy of $A$ whose every edge receives the same colour list $\{a, b\}$. We claim that every colouring of $G'$ from these lists either has a monochromatic copy of $B$ or a monochromatic copy of $Y$. Indeed, suppose we colour from the lists without creating a monochromatic copy of $B$.  By Claim~\ref{claim:broom-star}, for every $j \le k$ and each pair $\{a,b\}$, the colouring of the corresponding copy of $A_j = T(B, S^j)$ will contain a monochromatic copy of $S^j$ in either colour $a$ or colour $b$.\footnote{Actually, Claim~\ref{claim:broom-star} guarantees that the colouring of $A_j$ contains monochromatic copies of $S^j$ in both colours, but we do not need this stronger statement here.} This means that, for every $j$, there is a copy of $S^j$ in all but at most one colour from $\br{k+m+1}$.  The same argument (using Claim~\ref{claim:broom-cycle}) applies to every $C^j$. Since there are $m+k+1$ colours and $m+k$ graphs, there must be some colour that contains a copy of all $S^j$ and all $C^j$.  Consequently, every colouring of $G'$ from the lists has a monochromatic copy of either $B$ or $Y$.

  Suppose now that $X$ has $t$ connected components. Define $G$ to be the disjoint union of $(t-1)(k+m+1) + 1$ copies of $G'$ together with their list assignments. Any colouring of $G$ from these lists, either has a monochromatic copy of $Y$ in one of the copies of $G'$ or a monochromatic copy of $B$, in some colour in $\br{k+m+1}$, in each copy of $G'$.  In the latter case, by the pigeonhole principle, there is a colour containing $t$ disjoint copies of $B$, thus introducing a monochromatic copy of $X$. 
\end{proof}

\subsection{Proof of Proposition~\ref{prop:BC-non-list}}

Suppose first that $\cA \coloneqq \Aux(\cF)$ is $2$-colourable and that $\cF$ does not contain a star forest. We wish to show that any $G$ with $m(G) \le 1$ admits a $\{\text{red}, \text{blue}\}$-colouring of its edges without a monochromatic copy of any member of $\cF$. Fix a proper $2$-colouring $\varphi \colon V(\cA) \to \{\text{red}, \text{blue}\}$ of the auxiliary hypergraph $\cA$ and denote the connected components of $G$ by $G_1, \dotsc, G_m$. For every $i \in \br{m}$, we colour $G_i$ as follows:
\begin{itemize}
\item
  If $G_i$ contains an odd cycle $C$, colour the edges of the cycle with $\varphi(C)$ and the remaining edges according to the parity of their distance from the cycle, so that the monochromatic components are $C$ and stars.
\item
  Otherwise, if $G_i$ contains no odd cycle, colour it as in Claim~\ref{claim:trivial-colouring}, so that both monochromatic graphs are star forests.
\end{itemize}
We claim that this colouring if $\cF$-free.  Indeed, every monochromatic subgraph must be a $\cC^*$-graph.  Furthermore, if $F \in \cF$ became monochromatic and its non-star components are $F^1, \dotsc, F^m$, there were odd cycles $C^1, \dotsc, C^m$, with $F^i \subseteq C^i$ for each $i$, that all received the same colour.  However, this would mean that the edge $\{C^1, \dotsc, C^m\} \in \cA$ was monochromatic, a contradiction.

Suppose now that either $\cF$ contains a star forest or $\cA$ is not $2$-colourable. If the former holds, and $\cF$ contains a union of $m$ stars, each having at most $s$ rays, then the disjoint union of $2m-1$ copies of a star with $2s-1$ rays, which has density less than one, is clearly Ramsey for $\cF$.  We may therefore suppose that the latter holds, i.e., that $\cA$ is not $2$-colourable.  Using compactness, we can find a finite subhypergraph $\cA' \subseteq \cA$ that is already not $2$-colourable.

Let $X$ be an arbitrary $\cB$-graph from $\cF$ and let $B$ be a broom with sufficiently many hairs so that each component of $X$ is a subgraph of $B$.  Define $G'$ to be a graph whose connected components are $\{F(B,C) : C \in V(\cA')\}$.  We claim that every $\{\text{red}, \text{blue}\}$-colouring of the edges of $G'$ admits a monochromatic copy of either $B$ or the disjoint union $C^1 \cup \dotsb \cup C^k$ for some $\{C^1, \dotsc , C^k\} \in \cA'$.  Indeed, since each component of $G'$ has a monochromatic copy of $B$ or the corresponding cycle, by Claim~\ref{claim:broom-cycle}, we either have a monochromatic copy of $B$ or a monochromatic copy of every $C \in V(\cA')$.  However, since the hypergraph $\cA'$ is not $2$-colourable, there must be some collection $C^1, \dotsc, C^k$ of cycles that form an edge in $\cA'$ and were all coloured the same.

In order to define $G$, consider the subfamily $\cF' \subseteq \cF$ that consists of all $\cC^*$-graphs that contribute an edge to $\cA'$.  Since $\cF'$ is finite, we may give the next definitions:
\begin{itemize}
\item
  Let $S$ be a star with sufficiently many rays so that $S$ contains every star in each $F \in \cF'$.
\item
  For every graph $A$, let $\rho(A)$ denote the maximum, over all $F \in \cF'$, number of components $F^0$ of $F$ such that $F^0 \subseteq A$.
\end{itemize}
Now, let $\rho \coloneqq \max \{ \rho(C) \colon C \in V(\cA') \} $ and let $x$ denote the number of components of~$X$. Finally, define $G$ to be the disjoint union of $2(\rho-1) \cdot e(\cA') + 2x - 1$ copies of $G'$ and $\rho(S) + 2x - 2$ copies of $T(B, S)$.  Since every component of $G$ has at most one cycle, we clearly have $m(G) \le 1$.  To finish the proof, it suffices to show that $G$ is Ramsey for $\cF$.

Consider any $\{\text{red}, \text{blue}\}$-colouring of $G$.  We have already shown that every $G'$-component contains a monochromatic copy of either $B$ or the disjoint union $C^1 \cup \dotsb \cup C^k$ for some $\{C^1, \dotsc, C^k\} \in \cA'$. Moreover, by Claim~\ref{claim:broom-star}, each $T(B,S)$-component contains either a monochromatic copy of $B$ or monochromatic copies of $S$ in both colours.  If there are $2x-1$ monochromatic copies of $B$, some $x$ amongst them have the same colour and therefore we get a monochromatic copy of $X$.  Otherwise, there are at least $2(\rho-1) \cdot e(\cA') + 1$ copies of $G'$ and $\rho(S)$ copies of $T(B, S)$ that do not contain a monochromatic copy of $B$.  This means that we have $\rho(S)$ copies of $S$ in both colours and $2(\rho-1)\cdot e(\cA') + 1$ monochromatic edges of $\cA'$.  By the pigeonhole principle, some edge $\{C^1, \dotsc, C^k \} \in \cA$ is repeated $\rho$ times in the same colour, say red.  Let $F \in \cF'$ be a graph that generated this edge.  We claim that $G$ has a monochromatic copy of $F$.  Indeed, every component of $F$ is either a star, and thus contained in $S$, or contained in some $C^i$ for $i \in \br{k}$.  Moreover, $F$ has at most $\rho(S)$ star components and at most $\rho$ components that are contained in every $C^i$.  We conclude that there is a red copy of $F$.

\bibliographystyle{amsplain}
\bibliography{choosability}
\end{document}